\documentclass[
leqno, 
12pt]{amsart}
\usepackage{amssymb,amsmath, amsthm,latexsym, mathrsfs}

\usepackage{dashbox}

\usepackage{a4wide}

\usepackage{hyperref}

\usepackage{rotating}

\theoremstyle{plain}

\newtheorem{lemma}{Lemma}[section]
\newtheorem{theorem}[lemma]{Theorem}
\newtheorem{proposition}[lemma]{Proposition} 
\newtheorem{corollary}[lemma]{Corollary}

\newtheorem{remark}[lemma]{Remark}

\newtheorem{definition}[lemma]{Definition}

\parskip=\bigskipamount

\def\Z{\mathbb Z}
\def\R{\mathbb R}
\def\d{\delta}
\def\r{\rtimes}
\def\t{\times}
\def\o{\otimes}
\def\e{\epsilon}

\def\h{\hookrightarrow}

\def\a{\alpha}
\def\b{\beta}
\def\g{\gamma}

\def\D{\Delta}
\def\G{\Gamma}

\def\s{\sigma}

\def\supp{\text{supp} }

\def\BE{\begin{equation}}
\def\EE{\end{equation}}

\parindent=0pt

\title[On unitarizability]
{On unitarizability in the case of classical $p$-adic groups}

\author[M. Tadi\'c]
{Marko Tadi\'c}

\address{Department of Mathematics, University of Zagreb
\\
Bijeni\v{c}ka 30, 10000 Zagreb,
 Croatia\\
Email: \tt tadic{\char'100}math.hr}

\keywords{non-archimedean local fields, classical $p$-adic groups, irreducible representations, unitarizability, parabolic induction}
\subjclass[2000]{Primary: 22E50}

\thanks{This work has been  supported  by Croatian Science Foundation under the
project 9364.}
\date{\today}

\begin{document}

\begin{abstract} In the introduction of this paper we discuss a possible approach to the unitarizability problem for classical $p$-adic groups. In this paper we give some very limited  support that such approach is not without chance. In a forthcoming paper we shall give additional evidence in generalized cuspidal rank (up to) three.

\end{abstract}

\maketitle

\setcounter{tocdepth}{1}


\section{Introduction}
\label{intro}
\setcounter{equation}{0}

Some important classes of irreducible unitary representations of classical $p$-adic groups have been classified. Still,  classification  of the whole unitary dual of these groups does not seem to be in sight in the moment.
Since the case of general linear groups is well-understood, we shall start with description of the unitarizability in the case of these groups, the history related to this and what this case could suggest us regarding the unitarizability for the classical $p$-adic groups.

Fix a local  
field $F$. Denote by $GL(n,F)\widehat{\ }$ the set of all equivalence classes of irreducible unitary representations of $GL(n,F)$.
We shall use a well-known notation $\t$ of Bernstein and Zelevinsky for parabolic induction 
of two representations $\pi_i$ of $GL(n_i,F)$:
 $$
 \pi_1\t\pi_2=\text{Ind}
 ^{GL(n_1+n_2,F)}(\pi_1\o\pi_2)
 $$
 (the above representation is parabolically induced  from a suitable parabolic subgroup containing upper triangular matrices whose Levi factor is naturally isomorphic to the direct product $GL(n_1,F)\t GL(n_2,F)$).
Denote by $\nu$ the character $|\det|_F$ of a general linear group. Let
 $
  D_u=D_u(F)
  $
 be the set of all the equivalence classes of the irreducible square integrable (modulo center) representations of all $GL(n,F)$, $n\geq 1$. 
 For $\d\in D_u$   and $m\geq 1$ denote by
 $$
 u(\d,m)
 $$
 the unique irreducible quotient of 
 $
 \nu^{(m-1)/2}\d\t\nu^{(m-1)/2-1}\d\t\dots \t\nu^{-(m-1)/2}\d.
 $
 This irreducible quotient is called a Speh representation. Let
 $
 B_{rigid}
 $ 
 be the set of all Speh representations, and
 $$
 B=B(F)=B_{rigid}\cup \{\nu^\a\s\t\nu^{-\a}\s;\s\in B_{rigid}, 0<\a<1/2\}.
 $$
 Denote by $M(B)$ the set of all finite multisets in $B$.
 Then the following simple theorem solves the unitarizability problem for the  archimedean and the non-archimedean general linear groups in a uniform way:
 
 \begin{theorem} 
 \label{th-1-int}
 $($\cite{T-AENS}, \cite{T-R-C-new}$)$
  A mapping 
 $$
 (\s_1,\dots,\s_k)\mapsto \s_1\t\dots\t\s_k
 $$
 defined on $M(B)$ goes into 
 $
 \cup_{n\geq0}GL(n,F)\widehat{\ },
 $
 and it is a bijection.
\end{theorem}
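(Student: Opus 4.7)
The plan is to establish, in order: (a) each element of $B$ is irreducible and unitary; (b) any product $\s_1\t\cdots\t\s_k$ with $\s_i\in B$ is irreducible and unitary; and (c) every irreducible unitary representation of $GL(n,F)$, as $n$ varies, arises this way from a unique multiset in $M(B)$. Together with the observation that irreducibility of the product forces commutativity of $\t$ on the blocks (so the multiset structure is well-defined), the three claims imply the theorem.

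For (a): irreducibility of the Speh representation $u(\d,m)$ is immediate from its definition as the unique irreducible quotient of a standard module. Its unitarity is the deepest ingredient; in the non-archimedean case I would invoke Bernstein's theorem, proved by realizing $u(\d,m)$ as a local component of a unitary automorphic representation (the archimedean case being classical). For the complementary-series blocks $\nu^\a\s\t\nu^{-\a}\s$, irreducibility throughout $0<\a<1/2$ follows from the reducibility criterion for parabolic induction between Speh representations (the first reducibility occurring at $\a=1/2$), and unitarity from the standard deformation argument: the invariant Hermitian form, positive definite at $\a=0$, varies continuously in $\a$ and cannot change signature before a reducibility point. For (b), irreducibility of $\s_1\t\cdots\t\s_k$ is a combinatorial statement verified via the Zelevinsky calculus in the non-archimedean case (Vogan's analogue archimedeally): the multisets of segments attached to the rigid building blocks are pairwise unlinked, and the small complex shifts $\a\in(0,1/2)$ preserve this. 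Unitarity is then automatic since unitary parabolic induction preserves unitarity.

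The hard step is (c). Starting from an irreducible unitary $\pi$ of $GL(n,F)$, the Langlands classification presents $\pi$ as the unique irreducible quotient of a standard module, and Hermiticity of $\pi$ imposes symmetry on the Langlands data. I would then proceed by induction on $n$, peeling off one factor lying in $B$ at a time, using Jacquet module analysis together with the Aubert--Zelevinsky involution (which preserves unitarity and interchanges ``large'' and ``small'' Langlands exponents) to reduce to situations with controlled leading exponent. The main obstacle is not exhibiting candidate building blocks but rather proving completeness: systematically ruling out Hermitian irreducibles whose exponents lie outside the prescribed ranges $\{(m-1)/2:m\geq 1\}\cup(0,1/2)$. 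This is precisely the content of the ``u-hypothesis'' of \cite{T-AENS}, and is where Bernstein's unitarity input for Speh representations and explicit signature computations at candidate boundary points enter crucially. Injectivity of the map from $M(B)$ then follows from uniqueness in the Langlands classification, since the multiset of blocks can be recovered from the Langlands data of $\s_1\t\cdots\t\s_k$.
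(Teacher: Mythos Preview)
The paper does not contain its own proof of this theorem: it is stated in the introduction as a background result with citations to \cite{T-AENS} and \cite{T-R-C-new}, and the sentence in the introduction referring to ``a proof of Theorem~\ref{th-1-int}'' in the tenth section is a mislabelling --- section~\ref{proof main theo} proves Theorem~\ref{th-main}, not Theorem~\ref{th-1-int}. So there is nothing in the paper to compare your proposal against directly.

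That said, your outline broadly matches the strategy of \cite{T-AENS}, but two points deserve correction. First, you attribute the unitarity of Speh representations to ``Bernstein's theorem, proved by realizing $u(\d,m)$ as a local component of a unitary automorphic representation''; these are two distinct approaches. Bernstein's contribution in \cite{Be-P-inv} is a purely local result on $P$-invariant distributions (Kirillov's conjecture), while the automorphic realisation is Jacquet's construction \cite{Jq} via the residual spectrum. Either suffices, but they should not be conflated. Second, and more substantively, your step (b) claims that irreducibility of $\s_1\t\cdots\t\s_k$ for $\s_i\in B$ is ``a combinatorial statement verified via the Zelevinsky calculus''. This understates what is needed: the key input is Bernstein's theorem (statement (U0) in the language of \cite{T-AENS}) that the parabolic induction of two irreducible \emph{unitary} representations of general linear groups is irreducible. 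This is not a combinatorial fact about segments --- it is precisely the deep point, resting on \cite{Be-P-inv} in the field case and on \cite{Se} (with the recent simplification of \cite{LpMi}) in the division-algebra case. Your exhaustion sketch in (c) is in the right spirit but vague; the actual argument in \cite{T-AENS} uses the Bernstein--Zelevinsky derivative systematically rather than a generic ``Jacquet module analysis together with the Aubert--Zelevinsky involution''.
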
 

The above theorem was first proved in the $p$-adic case in mid 1980-es (in \cite{T-AENS}). Since the claim of the theorem makes sense also in the archimedean case,  immediately became evident that the theorem extends also to the archimedean case, with the same strategy of the proof (the main ingredients of the proof were already present in that time, although one of them was announced by Kirillov, but the proof was not complete in that time). One can easily get an idea of the proof from \cite{T-Bul-naj} (there is  considered the $p$-adic case, but exactly  the same strategy holds in the archimedean case). Vogan's classification in the archimedean case (Theorem 6.18 of \cite{Vo}) gives a very different description of the unitary dual (it is equivalent to Theorem \ref{th-1-int}, but it is not obvious to see that it is equivalent - see section 12 of \cite{BR-Comp}).

In the rest of this paper, we shall consider only the case of non-archimedean field $F$. Although the representation theory of reductive $p$-adic groups started with the F. Mautner paper \cite{Mt} from 1958, the ideas that lead to the proof of the above theorem can be traced back  to the paper of I. M. Gelfand and M. A. Naimark  \cite{GN47} from 1947, and together with the work on the unitarizability of general linear groups over division algebras, we may say that spans a period of  seven decades.

The proof of Theorem \ref{th-1-int} in \cite{T-AENS} is based on a very subtle  Bernstein-Zelevinsky theory based on the  derivatives (\cite{Z}), and on the Bernstein's paper \cite{Be-P-inv}. 
 Among others , the  Bernstein's paper \cite{Be-P-inv}  proves a fundamental fact about distributions on general linear groups. It   is based on the geometry of these groups (a key idea of that paper 
 can be traced back  to the Kirillov's  paper \cite{Ki} from 1962, which is motivated by a result of the Gelfand-Naimark book \cite{GN}). One of the approaches to the unitarizability of the Speh representations is using the H. Jacquet's construction of the residual spectrum of the spaces of the square integrable automorphic forms in \cite{Jq}, which generalizes an earlier construction of B. Speh in \cite{Sp}.

We presented in \cite{T-div-a}   what we expected to be the answer to the unitarizability  question  for general linear groups over a local non-archimedean division algebra $\mathcal A$\footnote{For $\d\in D(\mathcal A)_u$ denote by $\nu_\d:=\nu^{s_\d}$, where $s_\d$ is the smallest non-negative number such that $\nu^{s_\d}\d\t\d$ reduces. Introduce $u(\d,n)$ in the same way as above, except that we use $\nu_\d$ in the definition of $u(\d,n)$ instead of $\nu$. Then the expected answer  is the same as in the Theorem \ref{th-1-int}, except that one replaces $\nu$ by $\nu_\d$ in the definition of $B(\mathcal A)$.}.
We  have  reduced in \cite{T-div-a} a proof of the expected answer to  two expected facts. 
They were proved by J. Badulescu and D. Renard (\cite{BR-Tad}), and V. S\'echeree (\cite{Se}). As well as in the field case, these proofs (together with the theories that they require) 
are far from being simple (the S\'echeree proof is particularly technically complicated since it requires knowledge of a 
complete  theory of types for these groups).

As a kind of surprise   came a recent work \cite{LpMi} of
 E. Lapid and A. M\'inguez  in which they gave 
 another  (surprisingly simple in comparison  with the earlier) proof of the S\'echeree result (relaying on the Jacquet module methods). 
Besides, J. Badulescu gave earlier in \cite{Bad-Speh} another very simple (local) proof of his and Renard's result.

Thanks to this new development, we have a pretty simple approach  to the unitarizability for general linear groups over non-archimedean division algebras, using only
  very standard non-unitary theory and knowledge of   the reducibility point between two irreducible cuspidal representations of general linear groups, i.e. when $\rho\t\rho'$ reduces for $\rho$ and $\rho'$ irreducible cuspidal representations\footnote{In the field case it reduces if and only if $\rho'\cong\nu^{\pm1}\rho$.}.
It is very important that we have such a relative simple approach to the irreducible unitary representations in this case, since
these representations 
  are basic ingredients of some very important unitary representations, like the representations in the spaces of the square integrable automorphic forms, and their knowledge can be quite useful 
(see \cite{LaRpSt}, or \cite{HTy} or \cite{He})

Thanks to the work of J. Arthur, C. M\oe glin and J.-L. Waldspurger, 
 we have now classification of the irreducible cuspidal representations of classical $p$-adic groups\footnote{By classical groups we mean symplectic, orthogonal and unitary  groups (see the following sections for more details). In this introduction and in the most of the paper we shall deal with symplectic and orthogonal groups. The case of unitary groups is discussed in the last section of the paper.} in the characteristic zero (Theorem 1.5.1 of \cite{Moe-mult-1} and Corollary 3.5 of \cite{MoeR}). Their parameters give directly the reducibility points with irreducible cuspidal representations of general linear groups (see for example (ii) in Remarks 4.5.2 of \cite{MW} among other papers). 
These reducibilities are any of $0,\frac12,1,\frac32,2,\dots$. 

Therefore, a natural and important question is if we can have an approach to the unitarizability in the case of   classical $p$-adic groups
based only on the cuspidal reducibility points.
We shall try to explain a possible strategy for such an approach based only on the reducibility points.

We fix a series of classical $p$-adic groups (see the section \ref{notation} for more details).
First we shall introduce a notation for parabolic induction for the classical $p$-adic groups.
The multiplication $\t$ between representations of general linear groups defined using parabolic induction has a natural generalization  to a multiplication 
$$
\r
$$
  between representations of  general linear and  classical groups defined again using the parabolic induction (see the second section of this paper).

Now we shall recall of the reduction of the unitarizability problem obtained in \cite{T-CJM}.  An irreducible representation $\pi$ of a classical group is called weakly real if there exist self contragredient  irreducible cuspidal representations $\rho_i$ of general linear groups, an irreducible cuspidal representation $\s$ of a classical group, and $x_i\in\R$, such that
$$
\pi\h \nu^{x_1}\rho_1\t\dots\t \nu^{x_k}\rho_k\r\s.
$$
Then \cite{T-CJM} reduces in a simple way the unitarizability problem for the  classical $p$-adic groups to the case of the weakly real representations of that series of groups (see Theorems \ref{unitary-red-1} and \ref{unitary-red-2} of this paper). This is the reason that we shall consider only the weakly real representations in the sequel.

Let $X$ be some set of irreducible cuspidal representations of the general linear groups. For an irreducible representation $\tau$  of a general linear group one says that it is supported by $X$ if there exist $\rho_i\in X$ such that
$\tau\h \rho_1\t\dots\t \rho_k$. Let additionally $\s$ be an  irreducible cuspidal representations of a classical $p$-adic group and assume $X=\tilde X:=\{\tilde\rho; \rho\in X\}$ ($\tilde\rho$ denotes the contragredient representation of $\rho$). Then for an irreducible representation $\pi$  of a classical $p$-adic group one says that it is supported by $X\cup \{\s\}$ if there exist $\rho_i\in X$ such that
$\pi\h \rho_1\t\dots\t \rho_k\r\s$. In that case we say that 
$$
\s
$$
 is a partial cuspidal support of $\pi$.

  Let $\rho$ be an irreducible self contragredient cuspidal representation of a general linear group. Denote $X_\rho:=\{\nu^x\rho; x\in \R\}$. We call $X_\rho$ a line  of cuspidal representations. Further, denote by
  $$
  Irr(X_\rho;\s)
  $$ 
  the set of all equivalence classes of irreducible representations of classical groups supported by $X_\rho\cup\{\s\}$.
  
  Let $\pi$ be an irreducible (weakly real) representation of a classical $p$-adic group and  denote its partial cuspidal support by $\s$.
  Then for $\rho$ as above,  there exists a unique irreducible representation
$X_\rho(\pi)$  of a classical group supported in $X_\rho \cup\{\s\}$ and an irreducible representation 
$X_\rho^c(\pi)$ of a general linear group supported out of $X_\rho$ such that 
\begin{equation}
\label{cl}
\pi\h X_\rho^c(\pi)\r X_\rho(\pi).
\end{equation}
One can chose a finite set $\rho_1,\dots,\rho_k$ of irreducible selfcontragredient cuspidal representations of  general linear groups such that for other selfcontragredient representations $\rho$ of  general linear groups we have always $X_\rho(\pi)=\s$.

Fix some set  $\rho_1,\dots,\rho_k$ as above. Then
C. Jantzen has shown in \cite{J-supp} that  the correspondence
\begin{equation}
\label{????}
\pi \leftrightarrow (X_{\rho_1}(\pi),\dots,X_{\rho_k}(\pi))
\end{equation}
is a bijection from the set of all irreducible representations of classical groups supported by $X_{\rho_1},\dots,X_{\rho_k}\cup\{\s\}$ onto the direct product 
 $\prod_{i=1}^k Irr(X_{\rho_i};\s)$. 
Moreover, C. Jantzen has shown that the above correspondence reduces  some of the most basic data from the non-unitary theory about general parabolically induced representations (like for example the Kazhdan-Lusztig multiplicities) to the corresponding data for such  representations supported by  single cuspidal lines.

Regarding the unitarizability, 
 it would be very important  to know the answer to the following
 
 {\bf Preservation of unitarizability question:}
Is $\pi$ is unitarizable if and only if all $X_{\rho_i}(\pi)$ are unitarizable, $i=1,\dots,k.$

If we would know that the answer to the above question is positive,   then this would give a  
reduction of the unitarizability of a general irreducible representation  to the unitarizability for the irreducible representations of classical $p$-adic groups supported in single cuspidal lines. Such a line $X_\rho\cup\{\s\}$ is determined  by $\rho$ and $\s$, for which there exists a unique   $\alpha_{\rho,\sigma}\geq 0$ such that
$$
\nu^{\alpha_{\rho,\sigma}}\rho\r\s
$$
reduces.
In this paper we shall consider only the cases when
\begin{equation}
\label{1/2Z}
\alpha_{\rho,\sigma}\in (1/2)\mathbb Z.
\end{equation}
Actually, from the recent work of J. Arthur, C. M\oe glin and J.-L. Waldspurger, this assumption is known to hold if char$(F)=0$ (Theorem 3.1.1 of \cite{Moe-paq-stab}, and \cite{MoeR}).

Now suppose that we have  additional pair $\rho',\s'$ as $\rho,\s$. Assume that
$$
\alpha_{\rho,\sigma}=\alpha_{\rho',\sigma'}.
$$
If $\alpha_{\rho,\sigma}>0$, then there exists a canonical bijection
$$
E:Irr(X_\rho,\s)\rightarrow Irr(X_{\rho'};\s')
$$
(see the section \ref{independence} for the definition\footnote{It is there denoted by $E_{1,2}$.} of $E$). If $\alpha_{\rho,\sigma}=0$, then we have two possibilities for such a bijection (see again the section \ref{independence}). Chose any one of them and denote it  again by $E$. 

As we already mentioned, the parameter which determines the set $Irr(X_\rho,\s)$ (whose unitarizable representations we would like to determine) is  the pair $\rho,\s$. This pair  determines the cuspidal  reducibility point $\a_{\rho,\s}\in\frac12\Z$, which is a very simple object in comparison with the pair $\rho,\s$. Therefore, a natural question would be to try to see if the unitarizability depends only on $\a_{\rho,\s}$, and not on $\rho,\s$ itself. More precisely, we have the following

{\bf Independence of unitarizability  question:} Let $\pi\in Irr(X_\rho,\s)$. Does it hold that $\pi$ is unitarizable if and only if $E(\pi)$ is unitarizable.

In this paper we give some  very limited evidence that  one could expect positive answers to the above two questions. Using the classification of the generic unitary duals in \cite{LMuT}, we get that the both above  questions have positive answers in the case of irreducible generic representations (see the section \ref{generic}). Also the classification of the unramified   unitary duals in \cite{MuT} implies that we have  positive answer to the  first question in the case of irreducible unramified representations.

Further, very limited evidence for positive answer to the second question  
 give papers \cite{HTd} and \cite{HJ}. They imply that Independence  of unitarizability   question  has positive answer for irreducible representations which have the same infinitesimal character as a generalized Steinberg representation\footnote{Generalized Steinberg representations are defined and studied in \cite{T-AJM}. See the section \ref{SQ} of this paper for a definition}.  The biggest part of this paper is related to Preservation of unitarizability question  for representation whose one Jantzen component $X_\rho(\pi)$ has  the same infinitesimal character like a generalized Steinberg representation. We are able to prove the following very special case  related to Preservation of unitarizability question  for such representations:

\begin{theorem}
\label{th-main-intro} Suppose that $\pi$ is an irreducible unitarizable representation of a classical group, and suppose that the  infinitesimal character of some $X_\rho(\pi)$ is the same as the infinitesimal character of a generalized Steinberg representation 
corresponding to a reducibility point 
$\a_{\rho,\s}\in\frac12\Z$\footnote{As we already noted, this is know to hold if char$(F)=0$.}. Then $X_\rho(\pi)$ is 
 the generalized Steinberg representation, or its  Aubert-Schneider-Stuhler  dual.
 If char$(F)=0$, then $X_\rho(\pi)$ is unitarizable.
\end{theorem}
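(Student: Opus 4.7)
The plan is to show that the (finite) list of irreducible representations in $Irr(X_\rho;\sigma)$ sharing the infinitesimal character of the generalized Steinberg $St_\Delta = St(\Delta;\sigma)$, where $\Delta = [\nu^{\alpha_{\rho,\sigma}}\rho, \nu^{\alpha_{\rho,\sigma}+k-1}\rho]$, contains exactly $St_\Delta$ and its Aubert-Schneider-Stuhler dual $\hat{St}_\Delta$ among those that can occur as $X_\rho(\pi)$ under the unitarizability hypothesis. The enumeration is supplied by the Moeglin-Tadi\'c combinatorial parametrization of irreducible representations supported on a single cuspidal line (or equivalently by Jantzen's description), and $St_\Delta$, $\hat{St}_\Delta$ are immediately visible as the two extreme elements (the essentially square-integrable representation and its Zelevinsky-type dual).

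For each remaining candidate $T$ in the list, I would produce an explicit Langlands data description $T = L(\delta_1,\ldots,\delta_r;\sigma_T)$ and locate a distinguished $\delta_i = \nu^{e_i}\delta_i^{u}$ whose exponent $e_i>0$ lies strictly beyond the known unitarity range on the line $X_\rho$. The embedding $\pi \hookrightarrow X_\rho^c(\pi) \rtimes T$, coupled with Frobenius reciprocity applied to a carefully chosen maximal parabolic, then produces an irreducible subquotient $\nu^{e_i}\rho \otimes (\cdots)$ of a Jacquet module of $\pi$. Since $X_\rho^c(\pi)$ is supported off $X_\rho$, no cancellation involving this $\nu^{e_i}\rho$-factor can take place inside the induced representation, so the pathology survives in the Jacquet module of $\pi$ itself. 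The resulting exponent violates the Casselman-type positivity condition required for unitarizability of $\pi$, contradicting the hypothesis and forcing $X_\rho(\pi) \in \{St_\Delta, \hat{St}_\Delta\}$.

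The main obstacle is precisely this propagation step: extracting useful information about a Jacquet module of $\pi$ from knowledge only of $X_\rho(\pi)$ is not automatic, because the preservation of unitarizability under Jantzen's decomposition is exactly the open question to which this theorem contributes limited evidence. What makes the argument tractable for the generalized Steinberg infinitesimal character is the disjointness of the cuspidal lines supporting $X_\rho^c(\pi)$ and $X_\rho(\pi)$: the inducing factor cannot interfere with the $\nu^{e_i}\rho$-exponents, so the bad exponent originating from $T$ survives essentially unperturbed in the full Jacquet module of $\pi$.

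For the final unitarizability assertion in the case $\text{char}(F)=0$, the generalized Steinberg $St_\Delta$ is essentially square integrable when $\alpha_{\rho,\sigma}>0$ (hence unitarizable), and tempered when $\alpha_{\rho,\sigma}=0$ (again unitarizable); its dual $\hat{St}_\Delta$ is then unitarizable by the preservation of unitarizability under the Aubert-Schneider-Stuhler involution in characteristic zero.
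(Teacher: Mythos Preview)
Your proposal has a genuine gap at its central step. You want to conclude non-unitarizability of $\pi$ from the presence of a single ``bad'' exponent $\nu^{e_i}\rho$ in a Jacquet module, invoking a ``Casselman-type positivity condition required for unitarizability.'' No such criterion exists. Casselman's exponent conditions characterize temperedness and square-integrability, not unitarizability; an irreducible unitarizable representation can perfectly well have large positive exponents in its Jacquet modules. The paper's introduction explicitly addresses this point: the Casselman non-unitarity argument and the Howe--Moore asymptotic approach, which do work for the ordinary Steinberg representation, \emph{fail} for the generalized Steinberg situation, and this failure was precisely the motivation for the papers \cite{HTd} and \cite{HJ}. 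So the mechanism you propose for ruling out the intermediate subquotients $T$ does not work.

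The paper's proof takes a completely different route. For any such $T$ it invokes Proposition~\ref{prop-main}: there is a selfcontragredient unitarizable $\tau$ on the $\rho$-line with $\tau\rtimes T$ of length $\ge 5$ but with the multiplicity of $\tau\otimes T$ in $\mu^*(\tau\rtimes T)$ at most $4$. One then looks at $\tau\rtimes\pi$. By Jantzen's theorem (part (5)) its length is still $\ge 5$; if $\pi$ were unitarizable this induced representation would be semisimple, and Frobenius reciprocity would force the multiplicity of $\tau\otimes\pi$ in its Jacquet module to be $\ge 5$. A careful bookkeeping argument using the disjointness of $X_\rho$ and $X_\rho^c$ (transitivity of Jacquet modules plus the fact that $\pi\times\phi$ is the unique irreducible with $\pi\otimes\phi$ in its Jacquet module when the supports are disjoint) pushes this multiplicity back down to the $X_\rho$-line and bounds it by $4$, giving the contradiction. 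The non-unitarity is thus detected by a length-versus-multiplicity mismatch after inducing, not by any exponent bound.

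A smaller issue: your final sentence appeals to ``preservation of unitarizability under the Aubert--Schneider--Stuhler involution in characteristic zero'' as if it were a known general fact. It is not; this is open in general. The paper instead cites \cite{H} (and \cite{Moe-mult-1}, \cite{MoeR}) for the specific case needed, namely the Aubert dual of a square-integrable representation.
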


 For the case when $\pi= X_\rho(\pi)$ (i.e. when $\pi$ is supported by a single cuspidal line), 
 in \cite{HTd} and \cite{HJ} is proved that $\pi$ is a generalized Steinberg representation or its  Aubert-Schneider-Stuhler  dual (which are both unitarizable in characteristic zero). Our first idea  to prove the above theorem (more precisely, to prove that $X_\rho(\pi)$ is a generalized Steinberg representation or its  Aubert-Schneider-Stuhler  dual) was to use the strategy of that two papers and  the methods of \cite{J-supp}. While we were successful in extending \cite{HTd}, we were not for \cite{HJ}. This was a reason for a search of  
 a new (uniform) proof for \cite{HTd} and \cite{HJ}, which   is   easy to extend to the proof  of the above theorem (using \cite{J-supp}).
  This new proof is based on the following fact.

 \begin{proposition}
\label{prop-main-intro}
Fix  irreducible   cuspidal representations $\rho$ and $\s$  of a general linear and a classical group respectively, such that $\rho$ is self contragredient. Suppose that
 $
 \nu^\a\rho\r\s
 $
 reduces for some $\a\in\frac12 \Z$, $\a>0$.
 Let  $\gamma$ be an irreducible  subquotient   of
\begin{equation}
\label{first eq}
\nu^{\alpha+n}\rho\times\nu^{\alpha+n-1}\rho\times\cdots\times\nu^{\alpha}\rho\rtimes \sigma,
\end{equation}
  different from the generalized Steinberg representation and its  Aubert-Schneider-Stuhler  involution\footnote{The generalized Steinberg representation is a unique irreducible subrepresentation of \eqref{first eq}, while  its  Aubert-Schneider-Stuhler  involution is the unique irreducible quotient of \eqref{first eq}.}.
Then there exists an irreducible selfcontragredient unitarizable representation $\tau$ of a general linear group with support  in $\{\nu^k\rho;k\in\frac12\Z\}$, such that the length of 
$$
\tau\r\gamma
$$
is at least 5, and that the multiplicity of
$
\tau\o\gamma$ in the Jacquet module of $\tau\r\gamma$ is at most 4.

\end{proposition}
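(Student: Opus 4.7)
My plan is to construct $\tau$ from the Speh family on the cuspidal line $X_\rho=\{\nu^k\rho;k\in\tfrac12\Z\}$, tailored to the combinatorial type of the non-extremal subquotient $\gamma$. First, I would recall the parametrization of the irreducible subquotients of \eqref{first eq}: they are indexed by combinatorial data (admissible ``fissions'' of the segment $[\nu^\a\rho,\nu^{\a+n}\rho]$ in the M\oe glin-Tadi\'c picture), with the generalized Steinberg representation corresponding to the unbroken assembly (the unique irreducible subrepresentation) and its Aubert-Schneider-Stuhler dual to the fully broken one (the unique irreducible quotient). A non-extremal $\gamma$ corresponds to data with at least one fission point strictly inside $[\nu^\a\rho,\nu^{\a+n}\rho]$, and this fission point will dictate the construction of $\tau$.

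Given such a $\gamma$, I would take $\tau$ to be either a short self-contragredient Speh representation $u(\d,m)$ built on a segment of $X_\rho$ symmetric about $0$, or the unitarily induced representation $\nu^\b\d\t\nu^{-\b}\d$ from the complementary series, with $\d$ and $\b$ chosen so that the cuspidal support of $\tau$ crosses the fission point of $\gamma$. Self-contragredience and unitarizability of $\tau$ are then guaranteed by the explicit description of the set $B(F)$ in Theorem \ref{th-1-int}. I would then compute the Jacquet module of $\tau\r\gamma$ via the Tadi\'c formula $\mu^*(\tau\r\gamma)=M^*(\tau)\r\mu^*(\gamma)$, showing that only at most four of the terms resulting from the expansion can match to $\tau\o\gamma$ (the support of $\tau$ is calibrated to rule out all but a controlled set of matches). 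In parallel I would exhibit at least five non-equivalent irreducible constituents of $\tau\r\gamma$, either by listing distinct Langlands parameters visible in $\mu^*(\tau\r\gamma)$, or by a Jantzen-type deformation argument that tracks composition factors as a complex parameter in $\tau$ moves off its critical value.

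The main obstacle will be a uniform choice of $\tau$ across the whole family of non-extremal $\gamma$: as the number of fission points grows, so does the number of terms in Tadi\'c's formula, and guaranteeing at most four contributions to $\tau\o\gamma$ requires careful support bookkeeping. I expect the proof will split into a few cases according to the number and position of the fission points of $\gamma$ relative to the cuspidal reducibility point $\a=\a_{\rho,\s}$. In each case the two competing requirements---that the support of $\tau$ be rich enough to produce five distinct composition factors of $\tau\r\gamma$ but restricted enough to produce at most four matches in the Jacquet module---pin down $\tau$ essentially uniquely, which is what should make a direct (if tedious) verification feasible.
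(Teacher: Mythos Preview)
Your overall architecture---pick a self-contragredient unitarizable $\tau$ on the line $X_\rho$, bound the multiplicity of $\tau\otimes\gamma$ via $\mu^*(\tau\rtimes\gamma)=M^*(\tau)\rtimes\mu^*(\gamma)$, and exhibit five distinct Langlands parameters---matches the paper's strategy. But several of the concrete choices you suggest are off, and the main technical step is missing.

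First, the paper never uses a general Speh block $u(\delta,m)$ or a complementary series representation. The $\tau$ that works is always $\delta(\Delta_u)$ with $\Delta_u=[\nu^{-\alpha'}\rho,\nu^{\alpha'}\rho]$ (a single square-integrable self-dual segment representation), or its Aubert dual $\mathfrak s(\Delta_u)$, where $\alpha'$ is read directly from $\gamma$: if $\Delta_{k+1}=\emptyset$ one takes $\alpha'=\alpha+k-k_0$ with $k_0$ the largest index such that $\mathrm{card}(\Delta_{k_0})>1$ (so $\alpha'=\alpha$ when $\mathrm{card}(\Delta_k)>1$). The complementary series option is a red herring; its Jacquet module is strictly more complicated and offers no advantage.

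Second, the case split is not ``number and position of fission points'' but rather (i) $\Delta_{k+1}=\emptyset$ with $\mathrm{card}(\Delta_k)>1$, (ii) $\Delta_{k+1}=\emptyset$ with $\mathrm{card}(\Delta_k)=1$, and (iii) $\Delta_{k+1}\neq\emptyset$. Case (iii) is not handled directly: one applies the Aubert--Schneider--Stuhler involution to reduce to (i) or (ii), which is why the dual $\mathfrak s(\Delta_u)$ appears. You do not mention this reduction.

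Third, and most seriously, you underestimate the ``length $\geq 5$'' step. Two subquotients $L(a,\Delta_{k_0},b;\tau((\Delta_u)_\pm;\sigma))$ come cheaply from Proposition~5.3 of \cite{T-CJM}. The remaining three do \emph{not} come from reading off Langlands parameters in $\mu^*(\tau\rtimes\gamma)$ or from any deformation. Instead one embeds $\tau\rtimes\gamma$ as a summand (in the Grothendieck group) of a larger induced representation $\Pi$, exhibits the desired subquotients inside $\Pi$ (again via Proposition~5.3 of \cite{T-CJM}), and then proves by a delicate Jacquet-module support argument that they cannot lie in the complementary piece $\Pi'=\Pi-\tau\rtimes\gamma$. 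This exclusion argument---tracking exponents such as $c$, $c+1$, $-\alpha$, $-\alpha'$ through $M^*_{GL}$ of each factor---is the bulk of the work and is absent from your proposal. The multiplicity $\leq 4$ bound, by contrast, is a short direct computation once $\tau=\delta(\Delta_u)$ is fixed.
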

A bigger part of this paper is the proof of the above proposition. The proof  is pretty technical and we shall say only a few remarks about it here. We often use  Proposition 5.3 of \cite{T-CJM}\footnote{This is an extension to the case of classical groups   of Proposition 8.4 of \cite{Z}, which in the terms of the Langlands classification tells that $L(a+b)\leq L(a)\t L(b)$ (see the section \ref{notation} for notation).}   in proving that the length of $\tau\r\g$ is at lest five. Two irreducible sub quotients we get directly applying this proposition. For remaining irreducible sub quotients we consider $\tau\r\g$ as a part of a bigger representation $\Pi$ which has the same semi simplification as $\tau\r\g+\Pi'$, for some $\Pi'$. An advantage of $\Pi$ in comparison to $\tau\r\g$ is that we can easily write some irreducible sub quotients of it (using Proposition 5.3 of \cite{T-CJM}). Next step is to show that these irreducible sub quotients are not sub quotients of $\Pi'$. For this is particularly useful the  Geometric lemma, which is systematically applied through the structure of a twisted Hopf module which exists on the sum of the Grothendieck groups of the categories of the  finite length representations of the classical groups. 
Further, the multiplicity of $\tau\o\g$ in the Jacquet module of $\tau\r\g$ is estimated using the combinatorics which provides the above  structure of the twisted Hopf module.

  Now we shall  recall a little bit of the history of the unitarizability of the irreducible representations which have the same   infinitesimal character as a generalized Steinberg representation. The first case is the case of the Steinberg representations.  The question of their unitarizability in this case came from the question of cohomologically non-trivial irreducible unitary representations.  Their  non-unitarizability or unitarizability  was proved by W. Casselman (\cite{C-non-u}). His proof of the non-unitarizability relays   on the study of the Iwahori Hecke algebra. The importance of this non-unitarizability is very useful in considerations of the unitarizability in low ranks, since it implies also the non-existence of  complementary series which would end by the trivial representation (it also reproves the classical result of Kazhdan from \cite{Ka} in the $p$-adic case).
  
  A. Borel and N. Wallach observed that the Casselman's non-unitarizability  follows from the Howe-Moore theorem  about asymptotics of the matrix coefficients of the irreducible unitary representations (\cite{HM}) and the Casselman's asymptotics of the matrix coefficients of the admissible representations of  reductive $p$-adic groups (\cite{C-int}). Neither of that two methods can be used for the case of the generalized Steinberg representation. This was a motivation to write papers \cite{HTd} and \cite{HJ}. The strategy of the proofs of that two papers was for a $\g$ from Proposition \ref{prop-main-intro} to find  an irreducible unitarizable representation $\tau$ of a general linear group    such that $\tau\r\g $  is not semisimple. The semisimplicity of  $\tau\r\g $ (using the Frobenius reciprocity) would  imply that $\tau\o\g $ is in  the Jacquet module of each irreducible subquotient $\theta$ of $\tau\r\g $. 
  In \cite{HTd} and \cite{HJ}, there were found $\tau$ and $\theta$ such that $\tau\o\g $ is not in the Jacquet module of $\theta$.
  This implied the non-unitarizability of $\g$.

  In this paper  our strategy is to find $\tau$ such that the length of $\tau\r\g$ is strictly bigger then the multiplicity of $\tau\o\s$ in the Jacquet module of $\tau\r\s$ (the above proposition implies this).

    We are particularly thankful to C. Jantzen for reading  the  section \ref{Jantzen} of this paper, and giving  suggestions about it (in that section  are presented the main results of C. Jantzen from \cite{J-supp} in a  slightly reformulated form).
    We are very thankful 
to C. M\oe glin for her explanations regarding references related to some assumptions considered in this paper.
  We are also thankful to M. Hanzer, E. Lapid and A. Moy for useful discussions during the writing of this paper, and to the referee for  useful suggestions.

We are also thankful to the Simons Foundation for its generous travel and the local support during the Simons Symposium.

In a forthcoming paper we shall  prove  that the above two questions have positive answers for irreducible (weakly real) subquotients of
representations
$$
\rho_1\t\dots\t\rho_k\r\s, \qquad k\leq 3,
$$
where $\rho_i$ are irreducible cuspidal representations of general linear groups and $\s$ is an irreducible cuspidal representations of a classical $p$-adic group. Moreover, we shall classify such subquotients.
  
We shall now briefly review the contents of the paper.  The second section brings the notation that we use in the paper, while the third one describes the irreducible representations that we shall consider. 
The fourth section recalls of Proposition \ref{prop-main-intro} and explains what are the first two stages of its proof.
 In the fifth section 
 is the first stage of the proof (when the essentially square integrable representation of a general linear group with the lowest exponent that enters the Langlands parameter of $\g$ is non-cuspidal, and the tempered representation of the classical group which enters the Langlands parameter  of $\g$  is cuspidal). The following section considers  the situation as in the previous section, 
 except 
 that the essentially square integrable representation of a general linear group with the lowest exponent that enters the Langlands parameter  of $\g$  is now cuspidal. 
 Actually, we could handle these two cases  
 as a single case.
 Nevertheless, we split it, since the first  case is  simpler, and it  is convenient to consider it first. The seventh section handles the remaining case,
 when the tempered representation of a classical group which enters the Langlands parameter  of $\g$  is not cuspidal. This case  is obtained from the previous two sections by a simple application of the   Aubert-Schneider-Stuhler  involution. 
  At the end of this section we get the main results of \cite{HTd} and \cite{HJ} as a simple application of Proposition \ref{prop-main-intro}.
 In the following section we          recall of the Jantzen decomposition of an irreducible representation of a classical $p$-adic  group in a slightly modified version then in \cite{J-supp}, while the ninth  section  discusses the decomposition into the cuspidal lines. In the tenth section we give a proof of  Theorem \ref{th-1-int}, while in the following  section we show that the unitarizability is preserved in the case of the irreducible 
generic representations of classical $p$-adic groups. In a similar way, using \cite{MuT}, one can see also that  the  unitarizability is preserved for the  irreducible unramified representations of  the classical groups considered in \cite{MuT} (i.e. for the split classical $p$-adic groups). In the twelfth  section we formulate a question if the unitarizability for the irreducible representations of classical groups supported by a single cuspidal line depends only on the reducibility point (i.e., not on the particular cuspidal representations which have that reducibility). The last section discusses the case of unitary 
  groups.

\section{Notation and Preliminaries}
\label{notation}
\setcounter{equation}{0}

Now we shall briefly introduce the notation that we shall  use in the paper. One can find more details in \cite{T-Str} and \cite{Moe-T}.

We fix a local non-archimedean field $F$ of characteristic different from two. We denote by   $|\ \ |_F$ the normalized absolute value on $F$.

For the group $\mathcal G$ of $F$-rational points of a connected  reductive 
group 
over $F$, we  denote by $\mathcal R(
\mathcal
G)$ the Grothendieck group of the category Alg$_{f.l.}(\mathcal  
G)$
of all smooth representations of $\mathcal  G$ of finite length. We
denote by 
$$\text{s.s.}$$
 the semi simplification map  Alg$_{f.l.}(\mathcal  G)\rightarrow\mathcal R( \mathcal  G)$.  The  irreducible representations of $\mathcal G$ are also considered as elements of $\mathcal R(\mathcal G)$.

We have a natural ordering 
$
\leq
$
 on $\mathcal R(\mathcal  G)$ determined by the cone s.s.(Alg$_{f.l.}(\mathcal{G} )$).

If   $\text{s.s.}(\pi_1)\leq \text{s.s.}(\pi_2)$ for $\pi_i\in$ Alg$_{f.l.}(\mathcal{G} )$, then we
write simply $\pi_1\leq\pi_2$.

Now we go to the notation of the representation theory of
general linear groups (over $F$), following  the  standard notation of the Bernstein-Zelevinsky theory (\cite{Z}).
Denote
$$
\nu:GL(n,F) \to \mathbb R^\times, \ \ \nu(g) = |\text{det}(g)|_F.
$$
The set of equivalence classes of all irreducible essentially square integrable modulo center\footnote{These are irreducible representations which become square integrable modulo center after twist by a (not necessarily unitary) character of the group.} representations of all $GL(n,F)$, $n\geq 1$, is denoted by 
$$
D.
$$
For  $\delta\in D$
there exists a unique
$e(\delta)\in
\mathbb R$ and a unique  unitarizable  representation $\delta^u$ (which is  square integrable modulo center), such that
$$
\delta\cong\nu^{e(\delta)}\delta^u.
$$
The subset of cuspidal representations in $D$ is denoted by
$$
\mathcal C.
$$

For smooth
representations  $\pi_1$ and $\pi_2$   of $GL(n_1,F)$ and $GL(n_2,F)$ respectively,
$\pi_1\times\pi_2$ denotes the smooth representation of $GL(n_1+n_2,F)$
parabolically induced by $\pi_1\otimes\pi_2$ from the appropriate  maximal
standard  parabolic subgroup (for us, the standard parabolic subgroups will be those parabolic subgroups which contain the subgroup of the
upper triangular matrices). We use the normalized parabolic induction  in the paper.

We consider
$$
R=
\underset
{n\geq 0}
{\oplus}
 \mathcal R(GL(n,F))
$$
as  a graded group.  The parabolic induction   $\times$ lifts  naturally
to a $\mathbb Z$-bilinear mapping 
$  R\times R\to R$, which we denote again by $\times$. This $\mathbb Z$-bilinear
mapping factors through the tensor product, and the factoring   homomorphism  is denoted by $m: R\otimes R\to R$. 

Let $\pi$ be an irreducible smooth representation of
$GL(n,F)$. The sum  of the semi simplifications of the  Jacquet modules
 with respect to the standard  parabolic subgroups which have Levi subgroups $GL(k,F)\times
GL(n-k,F)$, $0\leq k \leq n$, defines an element of
$R\otimes R$  (see \cite{Z} for more details). The
Jacquet modules that we consider in this paper are normalized.
We  extend this  mapping additively  to the whole $R$, and denote the extension by
$$
m^*:R  \to R\otimes R.
$$
 In this way, $R$ becomes a graded
Hopf algebra.

For  an irreducible representation  $\pi$  of $GL(n,F),$ there exist
$\rho_1,\dots,\rho_k\in\mathcal C$ 
such that $\pi$ is isomorphic to  a subquotient of $\rho_1\times\dots
\times\rho_k$. The multiset of equivalence classes $(\rho_1,\dots,\rho_k)$
is called the cuspidal support of $\pi$.

Denote by $M(D)$ the set of all finite multisets in $D$. We add multi sets in a natural way:
$$
(\delta_1,
\delta_2,\dots,\delta_k)
+
(\delta_1',
\delta_2',\dots,\delta_{k'}')=
(\delta_1,
\delta_2,\dots,\delta_k,\delta_1',
\delta_2',\dots,\delta_{k'}').
$$
For $d=(\delta_1,
\delta_2,\dots,\delta_k)\in M(D)$ take a permutation $p$ of $\{1,\dots,k\}$ such that
$$
e(\delta_{p(1)})\geq e(\delta_{p(2)})\geq\dots \geq e(\delta_{p(k)}).
$$
Then the representation
$$
\lambda(d):=\delta_{p(1)}\times
\delta_{p(2)}\times\dots\times\delta_{p(k)}
$$
(called the standard module) has a unique irreducible quotient, 
which   is denoted by
$$
L(d).
$$
The mapping $d\mapsto L(d)$ defines a bijection between $M(D)$ and the set of all equivalence classes of irreducible smooth
representations of all the general linear groups over $F$. This is a formulation of  the  Langlands classification for general linear
groups.  We can describe $L(d)$ as a unique irreducible subrepresentation of
$$
\delta_{p(k)}\times
\delta_{p(k-1)}\times\dots\times\delta_{p(1)}.
$$
The formula for the contragredient is
$$
L(\delta_1,
\delta_2,\dots,\delta_k)\tilde{\ }
\cong L(\tilde\delta_1,
\tilde\delta_2,\dots,\tilde\delta_k).
$$

A  segment in $\mathcal  C$ is a set of the form
$$
[\rho, \nu^k\rho]=\{\rho,\nu\rho,\dots, \nu^k\rho\},
$$
where $\rho\in \mathcal  C, k\in \mathbb Z_{\geq 0}$. We shall denote a segment $[\nu^{k'}\rho, \nu^{k''}\rho]$
also  by 
$$
[k',k'']^{(\rho)},
$$
 or simply by $[k',k'']$ when we fix $\rho$ (or it is clear from the context which $\rho$ is in question).
We denote $[k,k]^{(\rho)}$ simply by $[k]^{(\rho)}$.

The set of all  such  segments is denoted by
$$
\mathcal  S.
$$
For a segment $\Delta=[\rho, \nu^k\rho]=\{\rho,\nu\rho,\dots, \nu^k\rho\}\in \mathcal  S$, the
representation
$$
\nu^k\rho\times\nu^{k-1}\rho\times \dots\times \nu\rho\times\rho
$$
contains a unique irreducible subrepresentation, which is denoted by 
$$
\delta(\Delta)
$$
and a unique irreducible quotient, which is denoted by 
$$
\mathfrak s(\Delta).
$$
The representation $\delta(\Delta)$ is an essentially square  integrable representation modulo center. In this way we
get a bijection between $\mathcal  S$ and $D$. Further, $\mathfrak s(\Delta)=L(\rho,\nu\rho,\dots, \nu^k\rho)$ and
\begin{equation}
\label{m^*-seg}
m^*(\delta([\rho,\nu^k\rho]))=\sum_{i=-1}^k \delta([\nu^{i+1}\rho,\nu^k\rho]) \otimes
\delta([\rho,\nu^{i}\rho]),
\end{equation}
$$
m^*(\mathfrak s([\rho,\nu^k\rho]))=\sum_{i=-1}^k \mathfrak s
([\rho,\nu^{i}\rho])
 \otimes
\mathfrak s
 ([\nu^{i+1}\rho,\nu^k\rho]). 
$$
 Using the above bijection between $D$ and $\mathcal S$, we can express Langlands classification in terms of finite multisets $M(\mathcal S)$ in $\mathcal S$:
 $$
 L(\Delta_1,\dots,\Delta_k):=
 L(\delta(\Delta_1),\dots,\delta(\Delta_k)).
 $$
 
 The Zelevinsky classification tells that
$$
\mathfrak s(\Delta_{p(1)})\times
\mathfrak s(\Delta_{p(2)})\times\dots\times
\mathfrak s(\Delta_{p(k)}),
$$
has a unique irreducible subrepresentation, which is
  denoted by
$$
Z(\Delta_1,\dots,\Delta_k)
$$
($p$ is as above).

Since the ring $R$ is a polynomial ring over $D$,  the ring
 homomorphism $\pi \mapsto  \pi^t$ on $R$ determined by the requirement that
 $\delta(\Delta)\mapsto \mathfrak s(\Delta)$, $\Delta\in \mathcal S$, is uniquely determined by this
 condition. It is an involution, and  is called
the Zelevinsky involution. It is a special case of an 
 involution which exists for any connected reductive group, called the
  Aubert-Schneider-Stuhler  involution. This extension we shall also denote by $\pi\mapsto \pi^t$. A very important property of the Zelevinsky involution, as
 well as of the  Aubert-Schneider-Stuhler  involution, is that it carries irreducible
 representations to the irreducible ones
 (\cite{Au}, Corollaire 3.9;
 also \cite{ScSt}).

The Zelevinsky involution $^t$ on the irreducible representations  can be introduced by the requirement 
$$
L(a)^t=Z(a),
$$
for any multisegment $a$. Then we define $^t$ on the multisegments by the requirement 
$$
Z(a)^t=Z(a^t).
$$

 For $\Delta=[\rho, \nu^k\rho]\in\mathcal S$, let
 $$
 \Delta^-=[\rho, \nu^{k-1}\rho],
  $$
  and for $d=(\Delta_1,\dots,\Delta_k)\in M(\mathcal S)$ denote
  $$
  d^-=(\Delta_1^-,\dots,\Delta_k^-).
 $$
 Then the ring homomorphism $\mathcal D:R \to R$ determined by the
 requirement that $\mathfrak s(\Delta)$ goes to $\mathfrak s(\D)+\mathfrak s(\Delta^-)$ for all $\Delta\in \mathcal S$, is
 called the derivative. This is a positive mapping.
 Let $\pi\in R$ and $\mathcal D(\pi)=\sum \mathcal D(\pi)_n$, where $\mathcal D(\pi)_n$ is in the $n$-th grading group of $R$. If $k$ is the lowest index  such that $\mathcal D(\pi)_k\neq 0$, then $\mathcal D(\pi)_k$ is called the highest derivative of $\pi$, and denoted by $\text{h.d.}(\pi)$.  Obviously, the highest derivative is multiplicative (since $R$ is an integral domain). Further
 $$
 \text{h.d.}(Z(\Delta_1,\dots,\Delta_k))=Z(\Delta_1^-,\dots,\Delta_k^-)
 $$
 (see \cite{Z}).

 We now very briefly recall  basic notation for the 
classical $p$-adic groups. We   follow   
\cite{Moe-T}. 
 Fix a Witt tower $V\in \mathcal  V$  of symplectic of orthogonal vector spaces starting with an anisotropic space $V_0$ of the same type (see sections III.1 and III.2 of \cite{Ku} for details).
Consider the group of isometries of $V\in \mathcal  V$, while in the case of odd-orthogonal groups one requires additionally that the determinants are 1. The group of split rank $n$ will be denoted
 by $S_n$ 
 (for some other purposes a different indexing may be more convenient). For $0\leq k\leq n$, one chooses  a parabolic subgroup whose Levi factor is isomorphic to $GL(k,F)\t S_{n-k}$ (see \cite{Ku}, III.2)\footnote{One can find in \cite{T-Str}  matrix realizations of the symplectic and split odd-orthogonal groups. In a similar way one can  make matrix realizations also for other orthogonal groups (and for unitary groups which are discussed a little bit later).}. Then using parabolic induction one defines in a natural way multiplication
 $$
 \pi\r\s
 $$
 of a representations $\pi$ and $\s$ of $GL(k,F)$  and $S_{n-k}$ respectively.

We do not follow the case   of split even orthogonal groups in this paper, although we expect that the results of this paper hold also in this case, with  the same proofs (split even orthogonal groups are not connected, which requires some additional checkings).

Let $F'$ be a quadratic extension of $F$, and denote by $\Theta$ the non-trivial element of the Galois group. In analogous way one defines the Witt tower of unitary spaces over $F'$, starting with an anisotropic hermitian space $V_0$, and consider the isometry groups. One denotes by $S_n$ the group of $F$-split rank $n$. Here multiplication $\r$ is defined among representations of groups $GL(k,F')$ and $S_{n-k}$. Except in the last section, the classical groups that we consider in this paper are symplectic and orthogonal groups (introduced above), excluding split even orthogonal groups (what we have already mentioned). In the last section is commented the case of  the unitary groups.

An irreducible representation of a classical group will be called weakly real if it is a subquotient of a representation of the form
$$
\nu^{r_1}\rho_1\t\dots \t\nu^{r_k}\rho_k\r\s,
$$
where  
 $\rho_i\in \mathcal C$ are selfcontragredient,  
 $r_i\in \R$ and $\s$  is an irreducible cuspidal representation of a classical group.

The following theorems  reduce the unitarizability problem for classical $p$-adic groups to the  weakly real case (see \cite{T-CJM}).

\begin{theorem}
\label{unitary-red-1}
 If $\pi $ is an irreducible unitarizable representation of some $S_q$, then there exist
an irreducible unitarizable representation $\theta$ of a general linear group and a weakly real irreducible
unitarizable representation $\pi'$ of some $S_{q'}$ such that
$$
\pi\cong\theta\r\pi'.
$$
\end{theorem}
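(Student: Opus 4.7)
The strategy is to factor off the portion of the cuspidal support of $\pi$ which is supported on ``non-self-dual'' cuspidal lines. First I analyze the cuspidal support of $\pi$: it is a multiset of twists $\nu^{x_i}\rho_i$ of irreducible cuspidals of general linear groups, together with a uniquely determined irreducible cuspidal $\s_0$ of a classical group. Call an orbit of cuspidals of the general linear groups under the equivalence generated by $\rho\sim\nu\rho$ and $\rho\sim\tilde\rho$ \emph{self-dual} if it contains a self-contragredient element, and \emph{non-self-dual} otherwise. The Weyl action on the classical side identifies the contribution of $\nu^x\rho$ with that of $\nu^{-x}\tilde\rho$; consequently the non-self-dual orbits contribute to the cuspidal support of $\pi$ in $\{\nu^x\rho,\nu^{-x}\tilde\rho\}$-symmetric fashion. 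Choosing one representative from each such symmetric pair, those chosen cuspidals form the cuspidal support of an irreducible representation $\theta$ of a general linear group.

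Next I establish the factorization $\pi\cong\theta\r\pi'$. Write $\pi$ as the Langlands quotient of a standard module $\lambda\r\tau$, and split the essentially square integrable factors of $\lambda$ into those supported on non-self-dual lines (product $\theta'$) and those supported on self-dual lines (product $\lambda'$). The key input is that if $\rho$ is cuspidal on a non-self-dual line, then $\nu^x\rho\t\rho''$ is irreducible for any cuspidal $\rho''$ on a disjoint line, and $\nu^x\rho\r\s''$ is irreducible for any cuspidal $\s''$ of a classical group (there is no reducibility point between a non-self-dual GL cuspidal and a classical cuspidal). This lets one commute the non-self-dual factors past $\lambda'\r\tau$ and identify $\pi$ with $\theta\r\pi'$, where $\theta$ absorbs the non-self-dual segments and $\pi'$ is the Langlands quotient of the residual $\lambda'\r\tau$; by construction the cuspidal support of $\pi'$ lies in self-dual lines together with $\s_0$, so $\pi'$ is weakly real.

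For the unitarizability half, I would invoke the standard principle that for an irreducible parabolically induced representation the space of invariant Hermitian forms is one-dimensional and is given by the tensor product of invariant Hermitian forms on the inducing data; consequently unitarizability of $\theta\r\pi'=\pi$ forces unitarizability of both $\theta$ and $\pi'$. The irreducibility of $\theta\r\pi'$ needed here follows from the same disjoint-cuspidal-support considerations as above.

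\textbf{Main obstacle.} The technical crux is organizing the commutation arguments so that $\theta$ is irreducible (not merely a product of irreducibles) and the factorization is canonical. This rests on non-reducibility of $\nu^x\rho\t\rho''$ and $\nu^x\rho\r\s''$ whenever $\rho$ lies on a non-self-dual line and the other factor is supported elsewhere, which in turn uses the classification of cuspidal reducibility points. Once these irreducibility statements are in place, the rearrangement and the well-definedness of the decomposition follow formally from the cuspidal-support orbit decomposition discussed later in the paper (Jantzen's theorem).
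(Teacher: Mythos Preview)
The paper does not prove this theorem; it is quoted from \cite{T-CJM} (see the sentence immediately preceding the statement: ``The following theorems reduce the unitarizability problem for classical $p$-adic groups to the weakly real case (see \cite{T-CJM})''), and the more precise Theorem~\ref{unitary-red-2} is likewise cited as Theorem~4.2 of \cite{T-CJM}. So there is no in-paper proof to compare against.

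That said, your outline is essentially the argument of \cite{T-CJM}. A couple of comments. First, in your Langlands-classification step you split only the essentially square integrable GL-factors $\lambda$ into self-dual and non-self-dual parts, but the tempered piece $\tau$ at the bottom can also carry non-self-dual cuspidal support (a tempered representation of $S_q$ embeds in $\delta_1\times\cdots\times\delta_m\rtimes\pi_{disc}$ with the $\delta_i$ unitary square integrable of GL, and these need not be supported on self-dual lines). You must therefore factor $\tau$ as well; this is exactly what Jantzen's decomposition (Theorem~\ref{main-Jantzen}, especially parts (5) and (6)) does cleanly, and you already invoke it at the end, so the fix is to use it from the start rather than going through the Langlands data by hand. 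Second, your unitarizability step (``unitarizability of $\theta\rtimes\pi'$ forces unitarizability of both factors'') is correct but is the genuinely nontrivial point: one needs that for an irreducibly induced representation the unique invariant Hermitian form is the one parabolically induced from the tensor product of forms on $\theta$ and $\pi'$, and that definiteness upstairs forces definiteness downstairs. This is proved in \cite{T-CJM} and should be cited rather than asserted.
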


Denote by $\mathcal C_u$ the set of all unitarizable classes in $\mathcal C$. For a set $X$ of equivalence classes of irreducible representations, we denote by $\tilde X:=\{\tilde\tau;\tau\in X\}$ (recall that $\tilde\tau$ denotes the contragredient of $\tau$). Theorem 4.2 of \cite{T-CJM} gives a more precise formulation of  the above reduction:

\begin{theorem} 
\label{unitary-red-2}
Let $\mathcal C'_u$ be a subset of $ \mathcal C_u$ 
satisfying $\mathcal C'_u\cap
\widetilde
{\mathcal C_u'}=\emptyset$, such that $\mathcal C'_u\cup
\widetilde
{\mathcal C_u'}$
 contains all
$\rho \in \mathcal C_u$ which are not self contragredient.
 Denote
$$
\mathcal C'=\{\nu^\a\rho;\ \a \in \mathbb R,\ \rho\in\mathcal C'_u\}.
$$
Let $\pi $ be an irreducible unitarizable representation of some $S_q$
Then there exists an irreducible representation $\theta$ of a general linear
group with support contained in $\mathcal C'$, and a weakly real irreducible
representation $\pi'$ of some $S_{n'}$ such that
$$
\pi\cong \theta\r \pi'.
$$
Moreover, $\pi$ determines such $\theta$ and $\pi'$ up to an equivalence.
Further,
$\pi$ is unitarizable (resp. Hermitian) if and only both $\theta$ and $\pi'$
are unitarizable (resp. Hermitian).
\end{theorem}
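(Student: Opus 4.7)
I would proceed in three stages: existence of the decomposition $\pi\cong\theta\rtimes\pi'$, its uniqueness up to equivalence, and the unitarizability (resp.\ Hermiticity) equivalence.

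\textbf{Existence.} Embed $\pi$ in a cuspidal standard module $\rho_1\times\cdots\times\rho_n\rtimes\sigma_0$. Split the $\rho_i$ into those whose unitary part $\rho_i^u$ lies in $\mathcal{C}'_u\cup\widetilde{\mathcal{C}'_u}$ (``complex'' factors) and those with $\rho_i^u$ self-contragredient (``weakly real'' factors). Between a complex cuspidal $\nu^a\tau$ and a self-contragredient one $\nu^b\tau'$ we have both $\tau\not\cong\tau'$ and $\tau\not\cong\tilde\tau'$, so $\nu^a\tau\times\nu^b\tau'$ is irreducible and the two factors commute in the multiplication $\times$. Hence the cuspidals can be regrouped to give $\pi\hookrightarrow\theta_0\rtimes\pi'_0$ with $\theta_0$ supported in $\mathcal{C}'\cup\widetilde{\mathcal{C}'}$ and $\pi'_0$ weakly real. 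Using the Weyl-group element of $S_n$ acting on a $GL(k,F)$-factor of the Levi by the contragredient involution (equivalently, the MVW-type identity $\tilde\rho\rtimes\tau\cong\rho\rtimes\tau$ at the relevant cuspidal level), I replace each occurrence of $\nu^a\tilde\rho$ in $\theta_0$ (with $\rho\in\mathcal{C}'_u$) by $\nu^{-a}\rho\in\mathcal{C}'$, arriving at $\pi\hookrightarrow\theta\rtimes\pi'_0$ with $\theta$ supported entirely in $\mathcal{C}'$. Since the cuspidal support of $\theta$ uses only lines $X_\rho$ with $\rho\in\mathcal{C}'_u$, while $\pi'_0$ uses only self-contragredient lines together with $\sigma_0$, the two supports lie on disjoint cuspidal lines; the ``independent lines'' irreducibility principle (in the spirit of \cite{J-supp}) then yields that $\theta\rtimes\pi'_0$ is irreducible, so $\pi\cong\theta\rtimes\pi'$ with $\pi':=\pi'_0$.

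\textbf{Uniqueness.} The cuspidal support of $\pi$, as a multiset on $\mathcal{C}$ together with the partial cuspidal support, decomposes canonically into the part on orbits $X_\rho\cup X_{\tilde\rho}$ for $\rho\in\mathcal{C}'_u$ and the weakly real part. Requiring $\theta$'s support to lie in $\mathcal{C}'$ rather than $\widetilde{\mathcal{C}'}$ pins down the cuspidal support of both $\theta$ and $\pi'$ individually. Combined with the irreducibility of $\theta\rtimes\pi'$ and standard Langlands-classification arguments for both $GL$ and the classical group, $\theta$ and $\pi'$ are recovered from $\pi$ up to equivalence.

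\textbf{Unitarizability and Hermiticity.} The easy direction: if $\theta$ and $\pi'$ are both unitarizable (resp.\ Hermitian), then $\theta\otimes\pi'$ carries an invariant positive-definite (resp.\ non-degenerate) form on the Levi, and normalized parabolic induction preserves both properties, so $\pi\cong\theta\rtimes\pi'$ inherits the same. Conversely, if $\pi$ is Hermitian, apply the Hermitian dual to the decomposition and use the Weyl-group trick once more to bring the $GL$-part back into $\mathcal{C}'$; comparing with uniqueness forces $\theta$ and $\pi'$ to be Hermitian individually. For unitarizability, the irreducibility of $\pi$ makes its invariant Hermitian form unique up to a real scalar, hence proportional to the form parabolically induced from the (irreducibility-unique) invariant Hermitian forms on $\theta$ and $\pi'$; positivity of the form on $\pi$ is therefore equivalent, by the standard compatibility of Hermitian forms with normalized parabolic induction, to positivity of both factor forms.

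\textbf{Main obstacle.} The chief technical subtlety is the converse of the unitarizability statement: one must argue that positivity of the unique invariant form on $\pi$ propagates back to positivity on each factor separately. This rests crucially on the irreducibility of $\theta\rtimes\pi'$ (so that the form is unique up to real scalar) and on the compatibility of positive-definite forms with normalized parabolic induction. A secondary but not automatic point is the MVW-type Weyl-group identity used to bring $\widetilde{\mathcal{C}'}$-supported factors into $\mathcal{C}'$, which has to be stated and verified at the level of isomorphism rather than only in the Grothendieck group.
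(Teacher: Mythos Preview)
The paper does not prove this theorem; it merely states it and cites Theorem~4.2 of \cite{T-CJM} for the proof. So there is no in-paper argument to compare your proposal against directly.

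That said, your outline is essentially the approach of \cite{T-CJM}: separate the cuspidal support into the lines through $\mathcal C'_u\cup\widetilde{\mathcal C'_u}$ and the self-contragredient lines, use the contragredient symmetry $\rho\rtimes\tau\cong\tilde\rho\rtimes\tau$ to push the former into $\mathcal C'$, and exploit that the two pieces live on disjoint cuspidal lines to obtain irreducibility of $\theta\rtimes\pi'$ and hence the decomposition and its uniqueness. These steps are carried out carefully in \cite{T-CJM}, and your sketch matches them.

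Where your write-up is thinnest is exactly where you flag it: the converse for unitarizability. The assertion that positivity of the unique invariant form on $\pi$ is ``equivalent, by the standard compatibility of Hermitian forms with normalized parabolic induction, to positivity of both factor forms'' is not a general principle one can simply invoke; parabolic induction of an indefinite form can in principle be definite. In \cite{T-CJM} this direction is handled by a more concrete argument: one shows that $\tilde\theta\times\theta$ is an irreducible unitarizable representation of a general linear group (this uses the classification for $GL$ and the fact that the support lies in $\mathcal C'$), and then from the unitarizability of $\pi\cong\theta\rtimes\pi'$ one deduces that $\tilde\theta\rtimes\pi\cong(\tilde\theta\times\theta)\rtimes\pi'$ is unitarizable; since $\tilde\theta\times\theta$ is unitarizable and the induction is irreducible, one can then peel off the $GL$-factor to conclude that $\pi'$ is unitarizable (and similarly for $\theta$). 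If you want a self-contained proof rather than a citation, that is the mechanism you should make explicit.
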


The direct sum of Grothendieck groups $\mathcal R(S_n)$, $n\geq 0$,  is denoted by $R(S)$. 
As in the case of general linear groups, one lifts  $\rtimes$ to a mapping $R\times R(S)\to R(S)$ (again denoted
by $\rtimes$). Factorization through $R\otimes R(S)$ is denoted by $\mu$. In this way
$R(S)$ becomes an $R$-module.

We denote by
$$
s_{(k)}(\pi)
$$
 the Jacquet module of a
representation $\pi$ of $S_n$ with respect to the parabolic subgroup $P_{(k)}$.  If there exists $0\leq k\leq n$  and an irreducible cuspidal representation $\s$ of $S_{q}$, $q\leq n$, such that any irreducible sub quotient $\tau$ of $s_{(k)}(\pi)$
is isomorphic to $\theta_\tau\o\s$ for some representation $\theta_\tau$ of a general linear group, then we shall denote $s_{(k)}(\pi)$ also by
$$
s_{GL}(\pi).
$$
Then $\s$ is called a partial cuspidal support of $\pi$.

For an irreducible  representation $\pi$ of $S_n$,
the sum of the semi simplifications of $s_{(k)}(\pi)$, $0\leq k\leq n$, is
denoted by 
$$
\mu^*(\pi)
\in R\otimes R(S).
$$ 
We extend $\mu^*$ additively to 
$
\mu^*:R(S)\to R\otimes R(S).
$
With this comultiplication, $R(S)$ becomes an $R$-comodule.

Further, 
$R\otimes R(S)$ is an
$R\otimes R$-module in a natural way (the multiplication is denoted   by $\rtimes$).
Let
$\sim:R \to R$ be the contragredient map and 
$\kappa:R\otimes R\to R\otimes R$, $\sum x_i\otimes y_i \mapsto
\sum_i y_i\otimes x_i$. Denote
$$
M^*=(m \otimes \text{id}_R) \circ ( \sim \otimes m^\ast)  \circ \kappa  \circ
m^\ast.
$$
Then (\cite{T-Str} and \cite{Moe-T})
\begin{equation}
  \label{eq: structure}
\mu^*(\pi\rtimes \sigma)=M^*(\pi)\rtimes \mu^*(\sigma)
\end{equation}
for $\pi \in R$ and $\sigma \in R(S)$ (or for admissible representations $\pi$ and $\sigma$ of $GL(n,F)$ and $S_m$
respectively).
A direct consequence of the  formulas \eqref{eq: structure} and \eqref{m^*-seg}  is the following formula:
 \begin{multline*}
M^*(\delta([\nu^{a} \rho,\nu^{c}\rho]) )=\sum_{s= a-1}^{ c}\sum_{t=i}^{ c}\delta([\nu^{-s}\tilde\rho,\nu^{-a}\tilde\rho])\times
\delta([\nu^{t+1} \rho,\nu^{c}\rho]) \otimes
\delta([\nu^{s+1} \rho,\nu^{t}\rho]).
\end{multline*}

Let $\pi$ be a representation of some $GL(m,F)$. 
Then the sum of the
 irreducible subquotients of the form $\ast \otimes 1$ in $ M^{\ast}(\pi)$
 will be denoted by 
 $$
 M_{GL}^*(\pi)\o1.
 $$
   Let    
$
m^{\ast}(\pi)=\sum x\otimes
y.
$
Then easily follows that
 \begin{equation}
 \label{M*GL} 
 M_{GL}^*(\pi)=
 \sum x\times \tilde{y}.
 \end{equation}
 Let $\pi$ be a sub quotient of $\rho_1\t\dots\t\rho_l$ where $\rho_i$ are irreducible cuspidal representations of general linear groups, and let $\s$ be an irreducible cuspidal representations of $S_q$. Then the 
 $$
 \text{s.s.}(s_{GL}(\pi\r\s))=M^*_{GL}(\pi)\o\s.
 $$

 Further,    the sum of the irreducible subquotients of the form  $1\otimes \ast$ in $M^{\ast}(\tau)$
is 
\begin{equation}
1\otimes \tau.
\end{equation}

 Now we shall recall of the Langlands classification for 
 groups $S_n$ (\cite{Si2}, \cite{BW}, \cite{Ko}, \cite{Re} and \cite{Z}).
Set
$$
D_+=\{\delta\in D; e(\delta)>0\}.
$$
Let $T$ be the set of all equivalence classes of irreducible  tempered representations of $S_n$, for all $n\geq 0$.
For 
$
t=((\delta_1,\delta_2,\dots,\delta_k),\tau)\in
M(D_+)\times T
$
take a permutation $p$ of $\{1,\dots,k\}$ such that 
\begin{equation}
\label{dec}
\delta_{p(1)}\geq\delta_{p(2)}\geq\dots\geq\delta_{p(k)}.
\end{equation}
Then the representation 
$$
\lambda(t):=\delta_{p(1)}\times\delta_{p(2)}\times\dots\times\delta_{p(k)}\rtimes \tau
$$
has a unique irreducible quotient, which is denoted by
$$
L(t).
$$
The mapping
$$
t\mapsto L(t)
$$
defines a bijection from the set
$M(D_+)\times T$ onto the set of all equivalence classes of the  irreducible smooth
representations of all $S_n$, $n\geq 0$. This is the Langlands classification for classical groups. The multiplicity of $L(t)$ in $\lambda(t) $ is one.

Let $t=((\delta_1,\delta_2,\dots,\delta_k),\tau) \in M(D_+)\times T$ and suppose that a permutation $p$ satisfies \eqref{dec}. Let $\d_{p(i)}$ be a representation of $GL(n_i,F)$ and $L(t) $  a representation of $S_n$. Denote by
$$
e_*(t)=(\underbrace{\d_{p(1)},\dots,\d_{p(1)}}_{n_1\text{ times}},\dots, \underbrace{\d_{p(k)},\dots,\d_{p(k)}}_{n_k\text{ times}},\underbrace{0,\dots,0}_{n'\text{ times}}),
$$
where $n'=n-n_1-\dots-n_k$. Consider a partial ordering on $\R^n$ given by $(x_1,\dots,x_n)\leq (y_1,\dots,y_n)$ if and only if
$$
\textstyle
\sum_{i=1}^j x_i\leq \sum_{i=1}^j y_i, \quad 1\leq j\leq n.
$$
Suppose $t,t'\in M(D_+)\times T$ and $L(t')$ is a sub quotient of $\lambda(t)$. Then
\begin{equation}
\label{BPLC}
\e_*(t')\leq e_*(t), \text{ and the  equality holds in the previous relation }\iff t'=t
\end{equation}
(see section 6. of \cite{T-Comp} for the symplectic groups - this holds in the same form for the other classical groups different from the split even orthogonal groups).

For $\Delta\in \mathcal S$ define $\mathfrak c(\Delta)$ to be $e(\delta(\Delta))$.  Let
$$
\mathcal S_+=\{\Delta\in\mathcal S; \mathfrak c (\Delta)>0\}.
$$
In this way  we can define in a natural way the Langlands classification $(a,\tau)\mapsto L(a;\tau)$ using $M(\mathcal S_+)\times T$ for the parameters. 

Let $\tau$ and $\omega$ be  irreducible
representations of
$GL(p,F)$ and $S_q$, respectively, and let $\pi$ an admissible
representation of $S_{p+q}$. Then a special case of the Frobenius reciprocity tells us
$$
\text{Hom}_{_{S_{p+q}}}(\pi,\tau \rtimes  \omega)\cong
\text{Hom}_{_{GL(p,F)\times S_q}}(s_{(p)}(\pi),\tau \otimes \omega),
$$
while the second second adjointness implies
$$
\text{Hom}_{_{S_{p+q}}}(\tau \rtimes  \omega, \pi)\cong
\text{Hom}_{_{GL(p,F)\times S_q}}(\tilde\tau \otimes \omega,s_{(p)}(\pi)).
$$
We could write down
the above formulas   for the 
parabolic subgroups which are not necessarily maximal.

\section{On the irreducible sub quotients of 
\texorpdfstring{
$\nu^{\alpha + n} \rho \times 
\cdots \times
\nu^{\alpha
+ 1} \rho \times \nu^\alpha \rho \rtimes \sigma$ 
}{Lg}
}
\label{SQ}
\setcounter{equation}{0}

Let $\rho$ and $\sigma$ be  irreducible    cuspidal representations
of $GL(p,F)$ and  $S_q$ respectively, such that $\rho$ is self contragredient. Then $\rho$ is unitarizable (cuspidality implies that $\s$ is unitarizable since the center of $S_q$ is compact - more precisely, finite). 
Then
$$
\nu^{\alpha_{\rho,\sigma}}\rho\r\s
$$
reduces for some $   \alpha_{\rho,\sigma}\geq0$. This reducibility point $   \alpha_{\rho,\sigma}$ is unique 
by \cite{Si2}. 
In this paper we shall assume that
\begin{equation}
\label{1/2Z+}
\alpha_{\rho,\sigma}\in (1/2)\mathbb Z.
\end{equation}
Actually, from the recent work of J. Arthur, C. M\oe glin and J.-L. Waldspurger, this assumption is known to hold if char$(F)=0$ (Theorem 3.1.1 of \cite{Moe-paq-stab} tells this for the  quasi split case, while  \cite{MoeR}  extends it to the  non-quasi split classical groups). 

In the most of this paper we shall deal with the case
\begin{equation}
\label{>0}
\alpha_{\rho,\sigma}>0,
\end{equation}
at least in the following several sections.
We shall denote the reducibility point $   \alpha_{\rho,\sigma}$  simply by
$$
\a.
$$
So $\a>0 $ and $\a\in\frac12\Z$.   
We shall deal with irreducible sub quotients of 
$$
\nu^{\alpha + n} \rho \times \nu^{\alpha + n-1} \rho \times \cdots \times
\nu^{\alpha
+ 1} \rho \times \nu^\alpha \rho \rtimes \sigma.
$$
The above representation has a unique irreducible subrepresentation, which is denoted by
$$
\delta([\nu^\alpha \rho, \nu^{\alpha + n} \rho]; \sigma) \ \ (n\geq 0).
$$
This subrepresentation is square integrable and it is called a generalized Steinberg representation.  We have 
$$
\mu^*\left(\delta([\nu^\alpha \rho, \nu^{\alpha + n}\rho];\sigma)\right) =
\sum^n_{k=-1} \delta([\nu^{\alpha + k + 1}\rho, \nu^{\alpha +
n} \rho]) \otimes \delta([\nu^\alpha \rho, \nu^{\alpha + k}
\rho]; \sigma),
$$
$$
\delta([\nu^\alpha \rho, \nu^{\alpha + n} \rho];\sigma)\tilde{\ }
\cong
 \delta([\nu^\alpha \rho,\nu^{\alpha + n} \rho];  \tilde{\sigma}).
 $$
 Further applying the  Aubert-Schneider-Stuhler  involution, we get
 \begin{multline*}
\mu^*\left(L(\nu^{\alpha+n} \rho. \dots , \nu^{\alpha + 1}\rho, \nu^{\alpha }
\rho; \sigma)
\right) 
=
\\
\sum^n_{k=-1} 
L(\nu^{-(\alpha + n)}\rho,\dots , \nu^{-(\alpha + k+2)}\rho,\nu^{-(\alpha +
k+1)} \rho)
 \otimes
 L(\nu^{\alpha+k} \rho. \dots , \nu^{\alpha + 1}\rho, \nu^{\alpha }
\rho; \sigma).
\end{multline*}

We say that a sequence of segments $\Delta_1,\dots,\Delta_l$  is decreasing  if $\mathfrak c(\Delta_1)\geq \dots \geq \mathfrak c(\Delta_l)$.

Now we recall of Lemma 3.1 from \cite{HTd} which we shall use several times in this paper:

\begin{lemma} 
\label{comp-series}
Let $n\geq  1$. Fix an integer $c'$ satisfying $0\leq  c' \leq  n-1$. Let
$\Delta_1,\ldots,\ \Delta_k$ be a sequence of decreasing mutually disjoint  non-empty segments such
that 
\[
 \Delta_1\cup \ldots \cup \ \Delta_k=\{\nu^{\alpha+c'+1}\rho,\ldots, \nu^{\alpha+n-1}\rho,\nu^{\alpha+n}\rho\}.
 \]
Let $\Delta_{k+1},\ldots,\Delta_l$, $k<l$, be a
sequence of decreasing mutually disjoint segments satisfying
\[
\Delta_{k+1}\cup\cdots
\cup\ \Delta_l=\{\nu^{\alpha}\rho,\nu^{\alpha+1}\rho,\ldots,\nu^{\alpha+c'}\rho\},\]
 such  that
$\Delta_{k+1},\ldots,\Delta_{l-1}$ are non-empty.
Let 
$$
a=(\Delta_1,\ldots,\Delta_{k-1}),
$$
$$
b=(\Delta_{k+2},\ldots,\Delta_{l-1}).
$$
Then in $R(S)$ we have:

\begin{enumerate}
\item If $k+1<l$, then
\begin{multline*}
\hskip15mm L(a+(\Delta_k))\rtimes  L((\Delta_{k+1})+b;\delta(\Delta_l;\sigma)) =
\\
L(a+(\Delta_k,\Delta_{k+1})+b;\delta(\Delta_l;\sigma))+
L(a+(\Delta_k\cup\Delta_{k+1})+b;\delta(\Delta_l;\sigma)).
\end{multline*}

\item
If  $k+1=l$, then
\begin{equation*}
\label{eq-basic2}
L(a+(\Delta_k)) \rtimes \delta(\Delta_{k+1};\sigma) 
=L(a+(\Delta_k);\delta(\Delta_{k+1};\sigma))
+L(a;\delta(\Delta_k\cup
\Delta_{k+1};\sigma)).    \qed
\end{equation*}
\end{enumerate}
\end{lemma}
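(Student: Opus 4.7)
I would prove the lemma in two stages: first exhibit the two listed Langlands data as irreducible subquotients of the left-hand side, and then show no other irreducible subquotient can occur.

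\textbf{Stage 1 (existence).} Consider part~(2) first. Since $L(a+(\Delta_k))$ is a quotient of the standard module $\lambda(a+(\Delta_k))$, the induced representation $L(a+(\Delta_k))\rtimes\delta(\Delta_{k+1};\sigma)$ is a quotient of $\lambda(a+(\Delta_k))\rtimes\delta(\Delta_{k+1};\sigma) = \lambda\bigl(a+(\Delta_k);\delta(\Delta_{k+1};\sigma)\bigr)$, so its unique irreducible quotient is the Langlands quotient $L\bigl(a+(\Delta_k);\delta(\Delta_{k+1};\sigma)\bigr)$. For the second constituent I would use the embedding $\delta(\Delta_k\cup\Delta_{k+1};\sigma)\hookrightarrow\delta(\Delta_k)\rtimes\delta(\Delta_{k+1};\sigma)$, which is the generalized-Steinberg realization recalled in \S\ref{SQ}. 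Inducing with $L(a)$ on the left produces an inclusion $L(a)\rtimes\delta(\Delta_k\cup\Delta_{k+1};\sigma)\hookrightarrow L(a)\times\delta(\Delta_k)\rtimes\delta(\Delta_{k+1};\sigma)$; the right-hand side surjects onto $L(a+(\Delta_k))\rtimes\delta(\Delta_{k+1};\sigma)$, and a short Jacquet-module comparison shows that the Langlands quotient $L\bigl(a;\delta(\Delta_k\cup\Delta_{k+1};\sigma)\bigr)$ of $L(a)\rtimes\delta(\Delta_k\cup\Delta_{k+1};\sigma)$ survives this quotient.

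\textbf{Stage 2 (length bounded by two).} For the upper bound on the length I would compute $s_{GL}\bigl(L(a+(\Delta_k))\rtimes\delta(\Delta_{k+1};\sigma)\bigr)$ via the structure formula~\eqref{eq: structure} together with~\eqref{m^*-seg}, and count the multiplicity of the test tensor $\delta(\Delta_1)\otimes\cdots\otimes\delta(\Delta_{k-1})\otimes\delta(\Delta_k)\otimes\delta(\Delta_{k+1})\otimes\sigma$. Combined with the Langlands partial order~\eqref{BPLC}, which forces any irreducible subquotient $L(t')$ to satisfy $e_*(t')\leq e_*\bigl(\lambda(a+(\Delta_k);\delta(\Delta_{k+1};\sigma))\bigr)$, the decreasing-disjoint hypotheses on $\Delta_1,\ldots,\Delta_l$ leave exactly the two parameters in the conclusion with the correct cuspidal support.

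\textbf{Part (1).} The argument for $k+1<l$ is parallel; the role of the mixed splitting in Stage~1 is taken over by the Zelevinsky splitting $\delta(\Delta_k)\times\delta(\Delta_{k+1}) = L(\Delta_k,\Delta_{k+1}) + \delta(\Delta_k\cup\Delta_{k+1})$, now applied inside $L(a)\times(\,\cdot\,)\rtimes L\bigl((\Delta_{k+2},\ldots,\Delta_{l-1});\delta(\Delta_l;\sigma)\bigr)$. The length count again reduces to the same combinatorial $\mu^*$-multiplicity estimate.

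\textbf{Main obstacle.} The technical heart of the proof is the length-two bound in Stage~2: the $e_*$-ordering only provides a necessary condition on Langlands parameters, so I would have to combine it with an explicit $\mu^*$ multiplicity computation on segments. The key combinatorial input is that, among the decreasing-disjoint family $(\Delta_1,\ldots,\Delta_{l-1})$, the only adjacent linked pair whose splitting survives the induction is $(\Delta_k,\Delta_{k+1})$, precisely because $\Delta_k$ sits immediately above the ``classical'' block $(\Delta_{k+1},\ldots,\Delta_l,\sigma)$ while being separated from all other segments in $a$ and $b$ by the tiling hypothesis.
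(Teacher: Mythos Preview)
The paper does not prove this lemma: it is quoted as Lemma~3.1 of \cite{HTd} and closed with a \qed, so there is no in-paper argument to compare your proposal against. Your two-stage plan (exhibit both constituents, then bound the length by~$2$) is the natural shape for such a statement and is in the spirit of \cite{HTd}.

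Where your outline is thin is Stage~2. The ordering \eqref{BPLC} constrains Langlands parameters of subquotients of the \emph{standard} module $\lambda\bigl(a+(\Delta_k);\delta(\Delta_{k+1};\sigma)\bigr)$, but the left-hand side you are analysing is only a subquotient of that standard module, so \eqref{BPLC} alone does not cut the list of possible parameters down to two. To turn your multiplicity idea into a proof you must name a tensor that \emph{every} irreducible subquotient of the left-hand side is forced to contain in its Jacquet module, and then verify that its total multiplicity in $\mu^*$ of the left-hand side is exactly~$2$. The fact that makes this computation clean---and which you should invoke explicitly---is that the ambient representation $\nu^{\alpha+n}\rho\times\cdots\times\nu^\alpha\rho\rtimes\sigma$ is regular, so all Jacquet modules are multiplicity one. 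In Stage~1, the phrase ``a short Jacquet-module comparison shows \dots\ survives this quotient'' also hides a genuine step: when $a\neq\emptyset$ one has $L(a)\times\delta(\Delta_k)=L(a,\Delta_k)+L(a_1,\Delta_{k-1}\cup\Delta_k)$ in $R$, and you must exclude $L\bigl(a;\delta(\Delta_k\cup\Delta_{k+1};\sigma)\bigr)\leq L(a_1,\Delta_{k-1}\cup\Delta_k)\rtimes\delta(\Delta_{k+1};\sigma)$ by a support/exponent argument of exactly the kind carried out in sections~\ref{first stage}--\ref{second stage}.
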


 We assume
 $$
 n\geq 1,
 $$ 
 and consider 
irreducible  subquotients of
$\nu^{\alpha+n}\rho\times\nu^{\alpha+n-1}\rho\times\cdots\times\nu^{\alpha}\rho\rtimes \sigma$.
Each irreducible subquotient can be written as
\[
\gamma=L(\Delta_1,\ldots,\Delta_k;\delta(\Delta_{k+1};\sigma))\]
for some $k\geq 0$, where
$\Delta_1,\ldots,\Delta_{k+1}$ is a sequence of decreasing mutually disjoint segments  such
that 
$$
\Delta_1\cup\ldots \cup \Delta_k\cup \Delta_{k+1}=\{\nu^{\alpha}\rho,\dots,\nu^{\alpha+n}\rho\},
$$
 and  that
$\Delta_1,\ldots,\Delta_k$ are non-empty\footnote{It is easy to see that Langlands parameter of $\g$ must be of above form. Namely, for the beginning, the tempered piece of the Langlands parameter must be square integrable (this follows from the fact that  $\rho$ is self contragredient  and the fact that $\nu^{\alpha + n} \rho \times  
\cdots 
\times \nu^\alpha \rho \rtimes \sigma$ is a regular representations, i.e. all the Jacquet modules of it are multiplicity one representations). Further, one directly sees that this square integrable representation must be some $\delta(\Delta_{k+1};\sigma)$. Now considering the support, and using the fact that $\mathfrak c(\D_i)>0$, we get that the Langlands parameter of $\g$ must be of the above form.}.

\begin{remark} Observe that 
$$
\big(\nu^{\alpha+n}\rho\t \nu^{\alpha+n-1}\rho\t \dots \t \nu^{\alpha+1}\rho \r\d(\nu^{\alpha}\rho;\s)\big)^t=
\nu^{\alpha+n}\rho\t \nu^{\alpha+n-1}\rho\t \dots \t \nu^{\alpha+1}\rho \r L(\nu^{\alpha}\rho;\s).
$$
Irreducible subquotients of $\nu^{\alpha+n}\rho\t \nu^{\alpha+n-1}\rho\t \dots \t \nu^{\alpha+1}\rho \r\d(\nu^{\alpha}\rho;\s)$ satisfy  $\D_{k+1}\ne\emptyset$, while  irreducible sub quotients  of $\nu^{\alpha+n}\rho\t \nu^{\alpha+n-1}\rho\t \dots \t \nu^{\alpha+1}\rho \r L(\nu^{\alpha}\rho;\s)$ satisfy $\D_{k+1}=\emptyset$.
From this directly follows that the  Aubert-Schneider-Stuhler  involution is a bijection between the irreducible sub quotients for which $\D_{k+1}\ne\emptyset$ and the irreducible subquotients  for which $\D_{k+1}=\emptyset$.

\end{remark}

\section{Key proposition}
\label{key}
\setcounter{equation}{0}

A bigger part  of this paper we shall spend to prove the following

\begin{proposition}
\label{prop-main} 
Let $\rho$ and $\sigma$ be  irreducible    cuspidal representations
of $GL(p,F)$ and  $S_q$ respectively, such that $\rho$ is self contragredient and that $\nu^\a\rho\r\s$ reduces for some positive  $\a\in\frac12\Z$.
 Further, let  $\gamma$ be an irreducible  subquotient   of
$\nu^{\alpha+n}\rho\times\nu^{\alpha+n-1}\rho\times\cdots\times\nu^{\alpha}\rho\rtimes \sigma$,  different from
$$
\text{
$L(\nu^{\alpha}\rho,\nu^{\alpha+1}\rho,\ldots,\nu^{\alpha+n}\rho;\sigma)$ and
$\delta([\nu^{\alpha}\rho,\nu^{\alpha+n}\rho];\sigma).
$}
$$
Then there exits an irreducible selfcontragredient unitarizable representation $\pi$ of a general linear group with support in $[-\a-n,\a+n]^{(\rho)}$,  such that the length of 
$$
\pi\r\gamma
$$
is at least 5, and that
$$
5\cdot \pi\o\gamma\not\leq \mu^*(\pi\r\gamma).
$$

\end{proposition}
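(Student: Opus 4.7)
As parameterized in Section \ref{SQ}, the excluded subquotients—the generalized Steinberg $\delta([\nu^\alpha\rho,\nu^{\alpha+n}\rho];\sigma)$ and its Aubert--Schneider--Stuhler dual $L(\nu^\alpha\rho,\ldots,\nu^{\alpha+n}\rho;\sigma)$—correspond to the two extreme segment decompositions of $\{\nu^\alpha\rho,\ldots,\nu^{\alpha+n}\rho\}$, so any remaining $\gamma=L(\Delta_1,\ldots,\Delta_k;\delta(\Delta_{k+1};\sigma))$ has an intermediate shape. I would split the analysis into two main cases: (A) $\Delta_{k+1}=\{\nu^\alpha\rho\}$, i.e.\ the tempered piece of the Langlands parameter of $\gamma$ is cuspidal; and (B) $|\Delta_{k+1}|\geq 2$. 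Case (B) follows from case (A) by applying Aubert--Schneider--Stuhler duality $\gamma\mapsto\gamma^t$: the dual lies in case (A) or at an excluded endpoint, the involution preserves irreducibility and unitarizability, and its interaction with parabolic induction at the level of Grothendieck groups allows one to transport both the length bound and the Jacquet--module multiplicity bound back (after replacing $\pi$ by $\pi^t$ if needed). Case (A) further subdivides according to whether the smallest GL-segment $\Delta_k$ is a singleton or has length $\geq 2$, but the strategy is uniform.

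For a given $\gamma$ in case (A) I would take $\pi$ to be a selfcontragredient unitarizable representation of a general linear group with support in $[-\alpha-n,\alpha+n]^{(\rho)}$; a natural first try is a single generalized Steinberg on a symmetric segment $\delta([\nu^{-\beta}\rho,\nu^\beta\rho])$ (square integrable modulo center, hence unitarizable, and selfcontragredient since the segment is symmetric and $\rho\cong\tilde\rho$), or a product of two such, with $\beta$ tuned to the Langlands data of $\gamma$. The length bound $\geq 5$ would be established as follows. A first pair of distinct irreducible subquotients of $\pi\rtimes\gamma$ is produced by two direct applications of Proposition 5.3 of \cite{T-CJM} (the classical-group extension of $L(a+b)\leq L(a)\times L(b)$), combining the Langlands parameters of $\pi$ and $\gamma$ in two different shapes. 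For three further subquotients I would view $\pi\rtimes\gamma$ as a Grothendieck-group summand of a larger induced representation $\Pi$ obtained by resolving $\pi$ (or appropriate data of $\gamma$) into its constituent segments before inducing, write $\Pi\equiv\pi\rtimes\gamma+\Pi'$ in the Grothendieck group, write down candidate irreducible subquotients of $\Pi$ via further applications of Proposition 5.3 of \cite{T-CJM}, and certify that they are not subquotients of $\Pi'$ by a Jacquet-module analysis: using $\mu^*=M^*(\cdot)\rtimes\mu^*(\cdot)$ from \eqref{eq: structure}, the segment formulas \eqref{m^*-seg}, and the Langlands-parameter ordering \eqref{BPLC}, certain Jacquet summands that the candidates must admit can be shown to be absent from $\mu^*(\Pi')$.

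For the upper bound on the multiplicity of $\pi\otimes\gamma$ in $\mu^*(\pi\rtimes\gamma)$, I would expand $\mu^*(\pi\rtimes\gamma)=M^*(\pi)\rtimes\mu^*(\gamma)$ via the $M^*$ formula on segments recalled in Section \ref{notation}. A term contributes to the $\pi\otimes\gamma$ isotypic component only through a pair $A\otimes B\in M^*(\pi)$ and $C\otimes D\in\mu^*(\gamma)$ with $A\times C=\pi$ in $R$ and $B\rtimes D=\gamma$ in $R(S)$. The explicit shape of $M^*$ on a segment (a double indexing by truncation points) reduces the first condition to a small finite set of choices, and matching $B\rtimes D=\gamma$ on the classical side cuts the admissible $(C,D)$ down drastically; a short enumeration then yields at most four contributing quadruples, giving multiplicity $\leq 4$.

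The principal obstacle is choosing a single $\pi$ that simultaneously achieves both bounds for every non-excluded $\gamma$ in case (A): the five-subquotient requirement favours a $\pi$ whose data interacts nontrivially with the Langlands parameter of $\gamma$, whereas the multiplicity bound favours a $\pi$ whose segment structure is rigid enough that few truncation pairs rebuild $\pi\otimes\gamma$. The delicate sub-case is that in which $|\Delta_k|\geq 2$, where longer segments inside $\gamma$ produce additional terms in $M^*(\pi)\rtimes\mu^*(\gamma)$ that must be tracked and discarded, and this is where the bulk of the combinatorial bookkeeping will reside.
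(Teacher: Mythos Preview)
Your overall strategy matches the paper's closely: the choice of $\pi=\delta([\nu^{-\beta}\rho,\nu^\beta\rho])$ on a single symmetric segment, the use of Proposition~5.3 of \cite{T-CJM} for the first pair of subquotients, the embedding in a larger $\Pi$ together with Jacquet-module elimination of $\Pi'$ for the remaining three, the enumeration via $M^*$ for the multiplicity bound, and the reduction of one case to the other by Aubert--Schneider--Stuhler duality (with $\pi$ replaced by $\pi^t$) --- all of this is exactly what the paper does in Sections~\ref{first stage}--\ref{end-proof-prop}.

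However, your case split is misstated, and as written the argument does not close. You describe case~(A) as $\Delta_{k+1}=\{\nu^\alpha\rho\}$ and gloss it as ``tempered piece cuspidal'', but these are incompatible: if $\Delta_{k+1}=\{\nu^\alpha\rho\}$ then the tempered piece is $\delta([\nu^\alpha\rho];\sigma)$, a non-cuspidal square-integrable subrepresentation of $\nu^\alpha\rho\rtimes\sigma$. The tempered piece equals $\sigma$ precisely when $\Delta_{k+1}=\emptyset$, and this is the paper's case~(A). Correspondingly, case~(B) must be $\Delta_{k+1}\neq\emptyset$ (i.e.\ $|\Delta_{k+1}|\geq 1$), not $|\Delta_{k+1}|\geq 2$. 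With your split the duality reduction breaks: the involution exchanges $\{\Delta_{k+1}=\emptyset\}$ with $\{\Delta_{k+1}\neq\emptyset\}$ (see the Remark at the end of Section~\ref{SQ}), so a $\gamma$ with $|\Delta_{k+1}|\geq 2$ dualizes to some $\gamma^t$ with empty last segment, which lies in neither of your cases, and the subquotients with $|\Delta_{k+1}|=1$ are never treated. Once the split is corrected to $\Delta_{k+1}=\emptyset$ versus $\Delta_{k+1}\neq\emptyset$, your outline is essentially the paper's proof. (A minor further point: in the direct case the paper takes $\beta=\alpha$ when $\mathrm{card}(\Delta_k)>1$ and a larger $\beta=\alpha'$ depending on the shape of $\gamma$ when $\mathrm{card}(\Delta_k)=1$, so the sub-case you flagged as ``delicate'' is in fact the simpler of the two.)
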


We shall consider  $\gamma$ as in the proposition, and write  $\gamma=L(\Delta_1,\ldots,\Delta_k;\delta(\Delta_{k+1};\sigma))$ as in the previous section (recall, $\Delta_1,\ldots,\Delta_k$ are non-empty mutually disjoint decreasing segments, and additionally $\Delta_k,\D_{k+1}$ are decreasing if $\D_{k+1}\ne\emptyset$).
Since $\gamma$ is different from $\delta([\nu^{\alpha}\rho,\nu^{\alpha+n}\rho];\sigma)$, we
have
\[
k\geq 1,
\]
and since  $\gamma$ is  different from
$L(\nu^{\alpha}\rho,\nu^{\alpha+1}\rho,\ldots,\nu^{\alpha+n}\rho;\sigma)$ we have
\begin{equation}
\label{more than 1}
\Delta_{k+1}\neq  \emptyset
\text{  or }
\text{$\Delta_{k+1}=\emptyset$ and card $(\Delta_i)>1$ for some $1\leq  i\leq k$.}
\end{equation}
We shall first study  $\gamma$ for which $\Delta_{k+1}=\emptyset$. We split our proof of the case $\D_{k+1}$ of the above proposition into two stages. Each of them is one of the following two sections.

\section{The case of  
card\texorpdfstring{$(\Delta_k)>1$ 
and $\Delta_{k+1}=\emptyset$
}{Lg}
}
\label{first stage}
\setcounter{equation}{0}

We continue with the notation introduced in the previous section. In this section we assume card$(\Delta_k)>1$ 
and $\Delta_{k+1}=\emptyset$.
Denote 
$$
\Delta_k=[\nu^{\alpha}\rho,\nu^c\rho],
$$
$$
\Delta_u=[\nu^{-\alpha}\rho,\nu^\a\rho],
$$
$$
\Delta=\D_k\cup\D_u=[\nu^{-\alpha}\rho,\nu^c\rho].
$$
 Then
\[\alpha <  c.\]
Denote
$$
a=(\Delta_1,\Delta_2,\ldots,\Delta_{k-1}),
$$
$$
a_1=(\Delta_1,\Delta_2,\ldots,\Delta_{k-2}),\quad \text{if $a\ne\emptyset$}.
$$

For 
$
L(a,\Delta_k)\rtimes \sigma 
$
 in the Grothendieck group we have
\begin{equation}
\label{eq:case0}
L(a,\Delta_k)\rtimes\sigma
=
L(a+(\Delta_k);\sigma)
+L(a;\delta(\Delta_{k};\sigma)).
\end{equation}
We shall denote $L(a+(\Delta_k);\sigma)$ below simply by $L(a,\Delta_k;\sigma)$.

Our first goal in this sections  is to prove:

\begin{lemma}
\label{first-le}
The representation $\d(\D_u)\r L(a,\Delta_k;\sigma)$ is of length at least 5 if 
card$(\D_k)>1$.
\end{lemma}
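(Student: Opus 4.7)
The plan is to exhibit at least five pairwise non-isomorphic irreducible subquotients of $\delta(\Delta_u)\r L(a,\Delta_k;\sigma)$, following the two-phase strategy sketched in the introduction. Since $\alpha=\alpha_{\rho,\sigma}$ is a positive reducibility point in $\tfrac12\mathbb Z$, the standard theory of tempered induction for classical $p$-adic groups gives that $\delta(\Delta_u)\r\sigma$ decomposes as a direct sum $\tau_+\oplus\tau_-$ of two non-isomorphic irreducible tempered representations. Applying Proposition~5.3 of \cite{T-CJM} to each embedding $\tau_\pm\h\delta(\Delta_u)\r\sigma$ yields two direct irreducible subquotients
$$
L(a,\Delta_k;\tau_+),\quad L(a,\Delta_k;\tau_-)\ \leq\ \delta(\Delta_u)\r L(a,\Delta_k;\sigma),
$$
distinguished by their tempered parts.

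For the remaining subquotients I would set $\Pi:=\delta(\Delta_u)\t L(a,\Delta_k)\r\sigma$. By \eqref{eq:case0}, $\Pi=\delta(\Delta_u)\r L(a,\Delta_k;\sigma)+\Pi'$ in $R(S)$ with $\Pi':=\delta(\Delta_u)\r L(a;\delta(\Delta_k;\sigma))$, so any irreducible subquotient of $\Pi$ not appearing in $\Pi'$ must appear in $\delta(\Delta_u)\r L(a,\Delta_k;\sigma)$. Since the segments $\Delta_u$ and $\Delta_k$ are linked with $\Delta_u\cup\Delta_k=\Delta$ and $\Delta_u\cap\Delta_k=\{\nu^\alpha\rho\}$, in $R$ one has
$$
\delta(\Delta_u)\t\delta(\Delta_k)=\delta(\Delta)\t\nu^\alpha\rho+L(\Delta_k,\Delta_u).
$$
Combined with the rank-one decomposition $\nu^\alpha\rho\r\sigma=L(\nu^\alpha\rho;\sigma)+\delta([\nu^\alpha\rho];\sigma)$, a nested use of Proposition~5.3 of \cite{T-CJM} inside $\Pi$ should produce three further irreducible subquotients of $\Pi$, namely
$$
L(a,\Delta,[\nu^\alpha\rho];\sigma),\quad L(a,\Delta;\delta([\nu^\alpha\rho];\sigma)),\quad L(a,[\nu^{\alpha+1}\rho,\nu^c\rho],[\nu^\alpha\rho];\tau_+);
$$
the third candidate uses the split of $\Delta_k$ into two non-empty segments of positive $\mathfrak c$, which is available precisely because $\text{card}(\Delta_k)>1$, and all segments involved have positive $\mathfrak c$ so the parameters are legitimate Langlands data.

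To conclude, I would verify that none of these three non-direct candidates lies in $\Pi'$. By Proposition~5.3 of \cite{T-CJM} applied to $\Pi'$, every irreducible subquotient of $\Pi'$ has Langlands tempered part equal to an irreducible component of the tempered induction $\delta(\Delta_u)\r\delta(\Delta_k;\sigma)$; a simple rank count (using $c>\alpha$) shows that such components act on a classical group of split rank strictly greater than those carrying $\sigma$, $\delta([\nu^\alpha\rho];\sigma)$, or $\tau_\pm$, so the tempered parts of the five candidates cannot match any of them. By uniqueness of Langlands data the five candidates are pairwise non-isomorphic, proving length at least five. The hard part is the third step: one has to ensure that the three non-direct candidates really occur in $\Pi$ and not merely in the enveloping standard module $\delta(\Delta_u)\t\lambda(a)\t\delta(\Delta_k)\r\sigma$, since only the strict inequality $L(a,\Delta_k)\leq\lambda(a,\Delta_k)$ in $R$ is available. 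This will likely require careful Jacquet-module bookkeeping via the twisted Hopf structure on $R\otimes R(S)$ given by $M^*$ and $\mu^*$.
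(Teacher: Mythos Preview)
Your first two subquotients $L(a,\Delta_k;\tau_\pm)$ and your third candidate $L(a,\Delta,\nu^\alpha\rho;\sigma)$ coincide with the paper's. But the exclusion argument you give for the last three is wrong, and your fourth and fifth candidates are not the ones that actually work.

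The central error is the sentence ``By Proposition~5.3 of \cite{T-CJM} applied to $\Pi'$, every irreducible subquotient of $\Pi'$ has Langlands tempered part equal to an irreducible component of $\delta(\Delta_u)\rtimes\delta(\Delta_k;\sigma)$.'' Proposition~5.3 says nothing of the sort: it asserts $L(a+b)\leq L(a)\times L(b)$, i.e.\ it identifies \emph{one} subquotient, not all of them. In general the subquotients of $\Pi'=\delta(\Delta_u)\rtimes L(a;\delta(\Delta_k;\sigma))$ have tempered parts that can be strictly smaller (in the ordering of \eqref{BPLC}) than the pieces of $\delta(\Delta_u)\rtimes\delta(\Delta_k;\sigma)$; so your rank count excludes nothing. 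Moreover, your third non-direct candidate $L(a,[\nu^{\alpha+1}\rho,\nu^c\rho],[\nu^\alpha\rho];\tau_+)$ does not arise from the decomposition $\delta(\Delta_u)\times\delta(\Delta_k)=\delta(\Delta)\times\nu^\alpha\rho+L(\Delta_k,\Delta_u)$ you wrote down: once you have passed to that decomposition, $\delta(\Delta_u)$ is no longer a factor, so there is no obvious way to produce $\tau_+$ as tempered part.

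What the paper actually does for the fourth and fifth subquotients is to use instead the two square-integrable subrepresentations $\delta(\Delta_\pm;\sigma)$ of $\delta(\Delta)\rtimes\sigma$ and take $L(a,\nu^\alpha\rho;\delta(\Delta_\pm;\sigma))$. Excluding these from $\Pi'$ (and, when $a\neq\emptyset$, from the companion piece $L(a_1,\Delta_{k-1}\cup\Delta_k)\times\delta(\Delta_u)\rtimes\sigma$) is done by tracking specific exponents in $s_{GL}$ via $M^*_{GL}$: the presence of the exponent $c$ without $c+1$, and the exponent $-\alpha$ occurring twice, give the contradictions. For the third subquotient $L(a,\Delta,\nu^\alpha\rho;\sigma)$ the exclusion from $\Pi'$ is via the exponent inequality \eqref{BPLC}, not a rank argument. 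These Jacquet-module computations are exactly the ``hard part'' you flag at the end, and they cannot be replaced by the tempered-rank shortcut you proposed.
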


We know from Theorem 13.2 of  \cite{T-irr} that $\d(\D_u)\r\s$ reduces. Frobenius reciprocity implies that it reduces into two non-equivalent tempered irreducible pieces. Denote them by $\tau((\Delta_u)_+;\sigma)$ and $\tau((\Delta_u)_-;\sigma).$
Now Proposition 5.3 of \cite{T-CJM} implies
\begin{equation}
\label{first and second-sq}
 L(a,\Delta_k;\tau((\Delta_u)_+;\sigma)),\quad  L(a,\Delta_k;\tau((\Delta_u)_-;\sigma)) \quad \leq \quad \delta(\Delta_u)\r L(a,\Delta_k;\sigma).
\end{equation}
Therefore, $\delta(\Delta_u)\t L(a,\Delta_k;\sigma)$ has length at least two.

 Now we shall recall of a simple Lemma 
4.2 from \cite{HTd}:

\begin{lemma}
\label{lem:help0}  If $|\D_k|>1$, then  we have
$$
L(a+(\D)) \times \nu^{\alpha}\rho
\leq
\delta(\Delta_u)\times L(a+(\Delta_k)),
$$
and the representation on the left hand side is irreducible. \qed
\end{lemma}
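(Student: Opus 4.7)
The plan splits the argument into irreducibility of $L(a+(\Delta))\times\nu^\alpha\rho$ and the Grothendieck-group inclusion.

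For the irreducibility, I would apply Zelevinsky's linking criterion to the factors $L(a+(\Delta))$ and $\nu^\alpha\rho=L(\{\nu^\alpha\rho\})$: it suffices to check that the singleton $\{\nu^\alpha\rho\}$ is not linked to any segment in $a+(\Delta)$. Since $-\alpha\le\alpha\le c$, the singleton is contained in $\Delta$, so it is not linked to $\Delta$. For each $\Delta_i$ with $i<k$, the decreasing–disjoint hypothesis on $\Delta_1,\ldots,\Delta_k$ forces the smallest exponent of $\Delta_i$ to be at least $c+1>\alpha+1$, so $\{\nu^\alpha\rho\}$ is also unlinked from $\Delta_i$.

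For the inclusion, the key observation is that $\Delta_u$ and $\Delta_k$ are themselves linked, with $\Delta_u\cup\Delta_k=\Delta$, $\Delta_u\cap\Delta_k=\{\nu^\alpha\rho\}$, and $\mathfrak c(\Delta_u)=0<\mathfrak c(\Delta_k)$. The Zelevinsky decomposition for a product of linked segments gives the short exact sequence
\begin{equation*}
0\to L(\Delta_u,\Delta_k)\to \delta(\Delta_u)\times\delta(\Delta_k)\to \delta(\Delta)\times\nu^\alpha\rho\to 0,
\end{equation*}
with quotient $\delta(\Delta)\times\nu^\alpha\rho$ irreducible (since $\{\nu^\alpha\rho\}\subset\Delta$). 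The strategy is then to invoke Frobenius reciprocity: the inclusion $L(a+(\Delta))\times\nu^\alpha\rho\le\delta(\Delta_u)\times L(a+(\Delta_k))$ follows once one produces $\delta(\Delta_u)\otimes L(a+(\Delta_k))$ as a composition factor of $m^*\bigl(L(a+(\Delta))\times\nu^\alpha\rho\bigr)$ in the appropriate grading. Via the coalgebra formula $m^*(\pi\times\tau)=m^*(\pi)\times m^*(\tau)$ together with $m^*(\nu^\alpha\rho)=1\otimes\nu^\alpha\rho+\nu^\alpha\rho\otimes 1$, this reduces to locating $\delta(\Delta_u)\otimes L(a+(\Delta_k'))$ in $m^*(L(a+(\Delta)))$, where $\Delta_k':=[\nu^{\alpha+1}\rho,\nu^c\rho]$; the formula \eqref{m^*-seg} yields such a term already for the standard module $\delta(\Delta)\times\delta(\Delta_{k-1})\times\cdots\times\delta(\Delta_1)$, and the linked-segment relation $\delta(\Delta_k')\times\nu^\alpha\rho=\delta(\Delta_k)+L(\Delta_k',\{\nu^\alpha\rho\})$ then places $L(a+(\Delta_k))$ inside $L(a+(\Delta_k'))\times\nu^\alpha\rho$.

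The main obstacle will be confirming that the contribution $\delta(\Delta_u)\otimes L(a+(\Delta_k'))$ survives the passage from the standard module down to its Langlands quotient $L(a+(\Delta))$, rather than being absorbed by other composition factors of the standard module. One clean way to handle this is to exploit the uniqueness of the cuspidal support of the square-integrable $\delta(\Delta_u)$: among all composition factors of the standard module, only $L(a+(\Delta))$ itself can contribute a Jacquet-module term whose left tensor factor is essentially square-integrable with cuspidal support exactly $\Delta_u$, so the term in question must come from $L(a+(\Delta))$, completing the argument.
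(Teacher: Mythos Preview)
Your irreducibility argument via the linking criterion is fine. The problem is in the second half.

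The Frobenius-reciprocity strategy does not work as written. First, having $\delta(\Delta_u)\otimes L(a+(\Delta_k))$ merely as a composition factor of the Jacquet module would not produce an embedding; you would need it as a quotient. More seriously, the term $\delta(\Delta_u)\otimes L(a+(\Delta_k'))$ does \emph{not} occur in $m^*(L(a+(\Delta)))$, nor even in $m^*$ of the standard module. From formula \eqref{m^*-seg} the left tensor factors of $m^*(\delta(\Delta))=m^*(\delta([-\alpha,c]))$ are the $\delta([i+1,c])$, all of which end at $c>\alpha$; the contributions from the $\delta(\Delta_j)$ with $j<k$ have support above $c$. Hence no product of left factors can have cuspidal support equal to $\Delta_u=[-\alpha,\alpha]$. (What \emph{does} occur is $\delta(\Delta_k')\otimes\delta(\Delta_u)$, i.e.\ with the two sides swapped, and that is not what your Frobenius step requires.) Your final paragraph, arguing uniqueness of a square-integrable left factor with support $\Delta_u$, is therefore establishing the uniqueness of a contribution that does not exist.

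The argument the paper cites from \cite{HTd}, and spells out in full for the companion Lemma~\ref{lem:help1}, bypasses Jacquet modules entirely. One applies the Zelevinsky involution, so that the claim becomes $Z(a,\Delta,[\alpha])\leq \mathfrak s(\Delta_u)\times Z(a,\Delta_k)$, and then computes the highest derivative of the right-hand side: $\mathrm{h.d.}\bigl(\mathfrak s(\Delta_u)\times Z(a,\Delta_k)\bigr)=\mathfrak s(\Delta_u^-)\times Z(a^-,\Delta_k^-)$. Since $\Delta_u^-=[-\alpha,\alpha-1]$ and $\Delta_k^-=[\alpha,c-1]$ are juxtaposed with union $\Delta^-$, the representation $Z(a^-,\Delta^-)$ is a subquotient of this highest derivative. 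There must therefore exist an irreducible subquotient of $\mathfrak s(\Delta_u)\times Z(a,\Delta_k)$ whose highest derivative is $Z(a^-,\Delta^-)$; support plus highest derivative determine it uniquely as $Z(a,\Delta,[\alpha])=\bigl(L(a+(\Delta))\times\nu^\alpha\rho\bigr)^t$.
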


The above lemma now implies
$$
 L(a,\Delta) \times \nu^{\alpha}\rho \r\s
\leq
\delta(\Delta_u)\times L(a,\Delta_k)\r\s=
$$
$$
\delta(\Delta_u)\r L(a,\Delta_k;\sigma)
+\delta(\Delta_u)\r L(a;\delta(\Delta_{k};\sigma)).
$$
Since by Proposition 4.2 of \cite{T-CJM}, 
$$
L(a,\Delta, \nu^{\alpha}\rho;\s)
$$
 is a sub quotient of $L(a,\Delta) \times \nu^{\alpha}\rho \r\s$, it is also a sub quotient  of 
$$
\delta(\Delta_u)\r L(a,\Delta_k;\sigma)
+\delta(\Delta_u)\r L(a;\delta(\Delta_{k};\sigma)).
$$
Suppose
$$
 L(a,\Delta, \nu^{\alpha}\rho;\s)
 \leq
 \delta(\Delta_u)\r L(a;\delta(\Delta_{k};\sigma)).
$$
Then
$$
 L(a,\Delta, \nu^{\alpha}\rho;\s)
 \leq
 \lambda(a)\r \tau =\lambda(a,\tau),
$$
where $\tau$ is some irreducible subquotient   of $\d(\D_u)\r \delta(\Delta_{k};\sigma)$.
Now \eqref{BPLC}
 implies that this is not possible (since $\a>0$).
Therefore
\begin{equation}
\label{third-sq}
L(a,\Delta, \nu^{\alpha}\rho;\s)\leq \delta(\Delta_u)\r L(a,\Delta_k;\sigma).
\end{equation}
Now we know that $\delta(\Delta_u)\t L(a,\Delta_k;\sigma)$ has length at least three, since obviously the representation $L(a,\Delta, \nu^{\alpha}\rho;\s)$  is not in $ \{L(a,\Delta_k;\tau((\Delta_u)_+;\sigma)),  L(a,\Delta_k;\tau((\Delta_u)_-;\sigma))\}$ (consider the tempered parts of the Langlands parameters).

From Theorem in the introduction of  \cite{T-seg} (see also \cite{MaT}) we know that $\d(\D)\r\s$ has two nonequivalent irreducible sub representations, and that they are square integrable. They are denoted there by $\d(\D_+;\s)$ and $\d(\D_-;\s)$.
This and Proposition 5.3 of \cite{T-CJM} imply
$$
L(a,\nu^{\a}\rho;\d(\D_\pm;\s))\leq L(a,\nu^{\a}\rho)\r \d(\D_\pm;\s)\leq L(a)\t \nu^{\a}\rho\r \d(\D)\r\s\leq 
$$
$$
L(a)\t\d(\D_k)\t \d(\D_u)\r\s.
$$
Therefore
$$
L(a,\nu^{\a}\rho;\d(\D_\pm;\s))\leq 
L(a)\t\d(\D_k)\t \d(\D_u)\r\s.
$$
If $a=\emptyset$, then formally 
\begin{equation}
\label{want to prove}
L(a,\nu^{\a}\rho;\d(\D_\pm;\s))\leq 
L(a,\D_k)\t \d(\D_u)\r\s
\end{equation}
since then $L(a)\t\d(\D_k)=L(\D_k)=L(a,\D_k)$.

Now we shall show that \eqref{want to prove} holds also if
$a\ne\emptyset$. In this case we have
$$
L(a,\nu^{\a}\rho;\d(\D_\pm;\s))\leq 
L(a)\t\d(\D_k)\t \d(\D_u)\r\s=
$$
$$
L(a,\D_k)\t \d(\D_u)\r\s +
L(a_1,\D_{k-1}\cup\D_k)\t \d(\D_u)\r\s
$$
(here we have used that $L(a)\t\d(\D_k)=L(a,\D_k) +
L(a_1,\D_{k-1}\cup\D_k)$, which is easy to prove).
Suppose
$$
L(a,\nu^{\a}\rho;\d(\D_\pm;\s))\leq
L(a_1,\D_{k-1}\cup\D_k)\t \d(\D_u)\r\s.
$$
Observe that $L(a,\nu^{\a}\rho;\d(\D_\pm;\s))\h L(\tilde a,\nu^{-\a}\rho)\r \d(\D_\pm;\s) 
\h L(\tilde a,\nu^{-\a}\rho)\t \d(\D)\r\s.$ This implies that the Langlands quotient $L(a,\nu^{\a}\rho;\d(\D_\pm;\s))$ has in the GL-type Jacquet module an irreducible sub quotient 
$$
\b
$$
 which has exponent $c$ in its Jacquet module, but has not $c+1$.

From the case of general linear groups we know $L(a_1,\D_{k-1}\cup\D_k)\leq L(a_1)\r \d(\D_{k-1}\cup\D_k)$ (see for example Proposition A.4 of \cite{T-AENS}). Now application of  tensoring, parabolic induction and Jacquet modules imply
$$
s_{GL}(L(a_1,\D_{k-1}\cup\D_k)\t \d(\D_u)\r\s)\leq
s_{GL}(L(a_1)\t \d(\D_{k-1}\cup\D_k)\t \d(\D_u)\r\s).
$$
Therefore,
 (on the level of semisimplifications) we have
$$
s_{GL}(L(a_1,\D_{k-1}\cup\D_k)\t \d(\D_u)\r\s)\leq \hskip60mm
$$
$$
M^*_{GL}((L(a_1))\t M^*_{GL}(\d(\D_{k-1}\cup\D_k))\t M^*_{GL}(\d(\D_u))\r(1\o \s).
$$
Now we shall examine how we can can get exponents $c$ and $c+1$ in the support of the left hand side tensor factor  of the  last line. Since the left hand side is a product of three terms, we shall analyze each of them. Recall $\D_u=[\nu^{-\a}\rho,\nu^\a\rho]$. Since $0<\a<c$, now \eqref{M*GL} and \eqref{m^*-seg} imply that neither $\nu^c\rho$ nor $\nu^{c+1}\rho$ is in the support of $M^*_{GL}(\d(\D_u))$.

Further, recall that $a=(\D_1,\dots,\D_{k-1})$ and $a_1=(\D_1,\dots,\D_{k-2})$. Since $\D_1\cup\dots\cup \D_{k-1}=[\nu^{c+1}\rho,\nu^{\a+n}\rho]$, we have $\D_1\cup\dots\cup \D_{k-2}=[\nu^{t}\rho,\nu^{\a+n}\rho]$ for some $t\geq c+2$ (clearly, $c+2>0$). This implies that no one of $\nu^{\pm c}\rho$ or $\nu^{\pm (c+1)}\rho$ is in the support of $L(a_1) $ or $L(a_1)\tilde{\ }$. This implies that neither $\nu^c\rho$ nor $\nu^{c+1}\rho$ is in the support of $M^*_{GL}(L(a_1))$ (use the fact that $L(a_1)\leq \prod_{i=1}^s \tau_i$ for some $\tau_i$ in the support of $L(a_i)$, and the formula that $M^*_{GL}(\tau_i)=\tau_i+\tilde\tau_i$ since $\tau_i$ are cuspidal).

Since the exponent $c$ shows up in the support of $\b$, it must show up in 
$$
M^*_{GL}(\d(\D_{k-1}\cup\D_k))=M^*_{GL}(\d([\a,d]))=
\sum_{x= \a-1}^{ d}
\delta([-x,-\a])\times
\delta([x+1,d]) ,
$$
where $c+1\leq d$. Now the above formula  for $M^*_{GL}(\d([\a,d])) $ implies that whenever we have in the support $c$, we must have it in a segment which ends with $d$, and therefore, we must have in the support also $c+1$. Therefore,   $\b$ cannot be a sub quotient of $L(a_1,\D_{k-1}\cup\D_k)\t \d(\D_u)\r\s$. This contradiction implies
$$
L(a,\nu^{\a}\rho;\d(\D_\pm;\s))\not\leq
L(a_1,\D_{k-1}\cup\D_k)\t \d(\D_u)\r\s.
$$
Now the following relation (which we have already observed above)
$$
L(a,\nu^{\a}\rho;\d(\D_\pm;\s))\leq 
L(a,\D_k)\t \d(\D_u)\r\s +
L(a_1,\D_{k-1}\cup\D_k)\t \d(\D_u)\r\s
$$
implies
$$
L(a,\nu^{\a}\rho;\d(\D_\pm;\s))\leq 
L(a,\D_k)\t \d(\D_u)\r\s.
$$

Therefore (in both cases) we have 
$$
L(a,\nu^{\a}\rho;\d(\D_\pm;\s))\leq
L(a,\D_k)\t \d(\D_u)\r\s=
$$
$$
\d(\D_u)\r L(a,\Delta_k;\sigma)
+
\d(\D_u)\r L(a;\delta(\Delta_{k};\sigma)).
$$
Suppose
$$
L(a,\nu^{\a}\rho;\d(\D_\pm;\s))\leq
\d(\D_u)\r L(a;\delta(\Delta_{k};\sigma)).
$$
One directly sees that in the GL-type Jacquet module of the left hand side we have an irreducible term in whose support appears exponent $-\a$ two times. 

Observe $\d(\D_u)\r L(a;\delta(\Delta_{k};\sigma)) \leq \d(\D_u)\t L(a)\r\delta(\Delta_{k};\sigma)$. For $\d(\D_u)\t L(a)\r\delta(\Delta_{k};\sigma)$, the exponent $-\a$ which cannot come neither from $M^*_{GL}(L(a))$ nor from $\mu^*(\delta(\Delta_{k};\sigma))$.
Therefore, it must come from
 $$
 M^*_{GL}(\d(\D_u))=\sum_{x= -\a-1}^{ \a}
\delta([-x,\a])\times
\delta([x+1,\a]).
 $$ 
 This implies that we can have the exponent $-\a$ at most once in the GL-part of Jacquet module of the right hand side. This contradiction implies that the
  inequality which we have supposed is false. This implies
$$
L(a,\nu^{\a}\rho;\d(\D_\pm;\s))\leq
\d(\D_u)\r L(a,\Delta_k;\sigma).
$$
Therefore, $\delta(\Delta_u)\t L(a,\Delta_k;\sigma)$ has length at least five. This completes the proof of the lemma.

The second aim of this section is to prove the following

\begin{lemma}
\label{le-J-1}
The multiplicity of 
$$
\delta(\Delta_u)\o L(a,\Delta_k;\sigma)
$$
in
$$
\mu^*(\delta(\Delta_u)\r L(a,\Delta_k;\sigma))
$$
is at most 4 if card$(\D_k)>1$.

\end{lemma}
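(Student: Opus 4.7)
Write $L := L(a,\Delta_k;\sigma)$. The plan is to apply the structure formula $\mu^{*}(\delta(\Delta_u)\rtimes L)=M^{*}(\delta(\Delta_u))\rtimes\mu^{*}(L)$ and bound the multiplicity of $\delta(\Delta_u)\otimes L$ by enumerating the pairs of summands that can contribute to it. A pair $(X\otimes Y,\,U\otimes V)$, with $X\otimes Y$ a summand of $M^{*}(\delta(\Delta_u))$ and $U\otimes V$ a summand of $\mu^{*}(L)$, contributes (multiplicity of $\delta(\Delta_u)$ in $X\times U$)$\cdot$(multiplicity of $L$ in $Y\rtimes V$). Since the GL-cuspidal support of $\delta(\Delta_u)$ is $[\nu^{-\alpha}\rho,\nu^{\alpha}\rho]$ and that of $L$ is $[\nu^{\alpha}\rho,\nu^{\alpha+n}\rho]$ (with $\nu^{x}\rho$ identified with $\nu^{-x}\rho$ in the classical group), a cuspidal-support analysis forces $U\in\{1,\nu^{\alpha}\rho,\nu^{-\alpha}\rho\}$.

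When $U=1$, the choices $X=\delta(\Delta_u)$, $Y=1$, $V=L$ are forced; inspection of the explicit double-sum formula for $M^{*}(\delta(\Delta_u))$ shows that $\delta(\Delta_u)\otimes 1$ arises exactly twice, from the two ``endpoint'' indices (those at which one of the two segment factors of $X$ is empty, so $X$ collapses to a single copy of $\delta(\Delta_u)$); this case contributes $2$. When $U=\nu^{\alpha}\rho$, $X$ would need support $[\nu^{-\alpha}\rho,\nu^{\alpha-1}\rho]$, but every non-trivial $X$ produced by $M^{*}(\delta(\Delta_u))$ contains $\nu^{\alpha}\rho$ in its support (each of its two defining segments ends at $\nu^{\alpha}\rho$), so this case contributes $0$. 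When $U=\nu^{-\alpha}\rho$, $X$ must have support $[\nu^{-\alpha+1}\rho,\nu^{\alpha}\rho]$, which is realized by exactly two summands of $M^{*}(\delta(\Delta_u))$, both with $X=\delta([\nu^{-\alpha+1}\rho,\nu^{\alpha}\rho])$ but with $Y=\nu^{-\alpha}\rho$ or $Y=\nu^{\alpha}\rho$; and $\delta(\Delta_u)$ appears with multiplicity $1$ in the linked Zelevinsky product $\delta([\nu^{-\alpha+1}\rho,\nu^{\alpha}\rho])\times\nu^{-\alpha}\rho$. Since $\nu^{-\alpha}\rho\rtimes V$ and $\nu^{\alpha}\rho\rtimes V$ have the same semisimplification for every $V$, the two $Y$-choices contribute equally; one bounds the common value by $1$ using second adjointness together with the fact (from the upper bound $\mu^{*}(L)\leq M^{*}(\delta(\Delta_1))\times\cdots\times M^{*}(\delta(\Delta_k))\rtimes(1\otimes\sigma)$) that the $\nu^{-\alpha}\rho$-coefficient of $\mu^{*}(L)$ can be produced only from the single summand of $M^{*}(\delta(\Delta_k))$ with $(s,t)=(\alpha,c)$, since no other $M^{*}(\delta(\Delta_i))$ can yield a GL-factor with exponent $-\alpha$ ($a_i>\alpha$ for $i<k$, and $c>\alpha$). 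Hence this case contributes at most $2$, and the total is at most $2+0+2=4$.

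The principal obstacle is the third case, which requires the careful tracking of the $\nu^{-\alpha}\rho$-coefficient of $\mu^{*}(L)$ and a Frobenius-reciprocity (second-adjointness) step to compare it with the multiplicity of $L$ in $\nu^{\pm\alpha}\rho\rtimes V$; the first two cases reduce to direct inspection of the explicit $M^{*}(\delta(\Delta_u))$ formula.
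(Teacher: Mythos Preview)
Your argument is correct and follows essentially the same route as the paper's proof: both apply the structure formula $\mu^*(\delta(\Delta_u)\rtimes L)=M^*(\delta(\Delta_u))\rtimes\mu^*(L)$, isolate the contribution $2$ from the term $1\otimes L$ in $\mu^*(L)$, and then reduce the remaining contribution to the two summands $\delta([-\alpha+1,\alpha]^{(\rho)})\otimes\nu^{\pm\alpha}\rho$ of $M^*(\delta(\Delta_u))$ paired with the single term $\nu^{-\alpha}\rho\otimes V$ in $\mu^*(L)$.

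Two minor remarks. First, your assertion that $U\in\{1,\nu^{\alpha}\rho,\nu^{-\alpha}\rho\}$ deserves one more sentence: the support of $U$ lies in $\{\pm\alpha\}$ with each exponent at most once, and $|U|=2$ is excluded because every nontrivial left factor $X$ coming from $M^*(\delta(\Delta_u))$ has $\nu^{\alpha}\rho$ in its support (both segments in $X$ end at $\alpha$), so $X$ cannot have support $[-\alpha+1,\alpha-1]$. Second, for the final bound the paper simply invokes regularity of $\nu^{\alpha+n}\rho\times\cdots\times\nu^{\alpha}\rho\rtimes\sigma$ (all Jacquet modules are multiplicity free), which immediately gives multiplicity $\le 1$ on both tensor factors. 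Your route---bounding the $\nu^{-\alpha}\rho$--isotypic part of $\mu^*(L)$ via $\mu^*(L)\le\prod_i M^*(\delta(\Delta_i))\rtimes(1\otimes\sigma)$ and then comparing Jacquet modules---achieves the same thing by an explicit computation; the label ``second adjointness'' is a bit loose, since what you actually use is that $m$ copies of $L$ inside $\nu^{-\alpha}\rho\rtimes V$ would force $m$ copies of $\nu^{-\alpha}\rho\otimes V$ in $s_{(p)}(\nu^{-\alpha}\rho\rtimes V)$, and the latter has exactly one such term (as $V$ is supported away from $\pm\alpha$). Either way the bound is $1$, and the total is at most $4$.
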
 

\begin{proof} Denote $\b:=\delta(\Delta_u)\o L(a,\Delta_k;\sigma)$. Recall
$$
M^*(\delta(\Delta_u))
=
M^*(\delta([-\a,\a]) )=\sum_{x= -\a-1}^{ \a}\sum_{y=x}^{ \a}\delta([-x,\a])\times
\delta([y+1,\a]) \otimes
\delta([x+1,y])
$$
Now if we take from $\mu^*(L(a,\Delta_k;\sigma))$ the term $1\o L(a,\Delta_k;\sigma)$, to get $\b$ for a sub quotient we need to take from $M^*(\delta(\Delta_u))$ the term $\delta(\Delta_u)\o1$, which 
shows up there two times. This gives multiplicity two of $\b$.

Now we consider  terms from $\mu^*(L(a,\Delta_k;\sigma))$ different from $1\o L(a,\Delta_k;\sigma)$ which can give $\b$ after multiplication with a term from $M^*(\delta(\Delta_u))$ (then a term from $M^*(\delta(\Delta_u))$ that can give $\b$ for a sub quotient is obviously different from 
$\d(\D_u)\o1$,
which implies that we have $\nu^{\a}\rho$ in the support of the left hand side tensor factor).
The above formula for $M^*(\delta(\Delta_u))$ and the set of possible factors of $L(a,\Delta_k;\sigma)$ (which is $\nu^{\pm\a}\rho,\nu^{\pm(\a+1)},\dots $)  imply that we need to have $\nu^{-\a}\rho$ on the left hand side of the tensor product of that term from $\mu^*(L(a,\Delta_k;\sigma))$. For such a term from $\mu^*(L(a,\Delta_k;\sigma))$,  considering the support we see that we have two possible terms from  $M^*(\delta(\Delta_u))$. They are
$
\delta([-\a+1,\a]) \otimes
[-\a]
$ 
and 
$
\delta([-\a+1,\a]) 
\otimes
[\a]
$. 
Each of them will give multiplicity at most one (use the fact that here on the left and right hand side of $\o$ we are in the regular situation).
\end{proof}

\section{The case of  
\texorpdfstring{card$(\Delta_k)=1$ 
and $\Delta_{k+1}=\emptyset$
}{Lg}
}
\label{second stage}
\setcounter{equation}{0}

We continue with the notation introduced in the  section \ref{key}. In this section we assume card$(\Delta_k)=1$ 
and $\Delta_{k+1}=\emptyset$.
As we already noted in \eqref{more than 1}, we consider the case when card($\D_i)>1$ for some $i$. Denote maximal such index by $k_0$. Clearly, 
$$
k_0<k.
$$
Write 
$$
\Delta_{k_0}=[\nu^{\alpha+k-k_0}\rho,\nu^c\rho]=[\nu^{\alpha'}\rho,\nu^c\rho], 
$$
$$
\Delta_u=[\nu^{-\alpha'}\rho,\nu^{\a'}\rho],
$$
$$
\Delta=[\nu^{-\alpha'}\rho,\nu^c\rho],
$$
$$
\D_1=[\a,\a'-1],
$$
$$
b=[\a,\a'-1]^t=([\a],[\a+1]\dots,[\a'-1])\ne \emptyset.
$$
 Then
$$
\alpha' <  c.
$$
Let
$$
a=(\Delta_1,\Delta_2,\ldots,\Delta_{k_0-1}),
$$
$$
a_1=(\Delta_1,\Delta_2,\ldots,\Delta_{k_0-2}),\quad \text{if $a\ne\emptyset$}.
$$
Then
$$
(\D_1,\dots,\D_k)=(a,\D_{k_0},b).
$$

We shall study 
$
L(a,\Delta_{k_0},b)\rtimes \sigma .
$
The previous lemma implies that in the Grothendieck group we have
\begin{equation}
\label{eq:case1}
L(a,\Delta_{k_0},b)\rtimes\sigma
=
L(a,\Delta_{k_0},b;\sigma)
+L(a;\Delta_{k_0},\nu^{\a'-1},\dots,\nu^{\a+1};\d(\nu^{\a};\sigma)).
\end{equation}
Our first goal in this section is to prove the following

\begin{lemma}
\label{sec-le}
Then length of the representation $\d(\D_u)\r L(a,\D_{k_0},b;\sigma)$ is at least 5 if  $k_0<k$ and card$(\D_{k_0})>1$.
\end{lemma}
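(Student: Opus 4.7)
The plan is to mirror the proof of Lemma~\ref{first-le}, exhibiting five mutually distinct irreducible subquotients of $\d(\D_u)\r L(a,\D_{k_0},b;\s)$ in three installments. Since $\D_u=[\nu^{-\a'}\rho,\nu^{\a'}\rho]$ still contains the reducibility point $\a$ (as $\a'\geq\a$ and $\a'-\a\in\Z_{\geq 0}$), I would first invoke Theorem 13.2 of \cite{T-irr} to decompose $\d(\D_u)\r\s$ into two inequivalent irreducible tempered pieces $\tau((\D_u)_\pm;\s)$; Proposition 5.3 of \cite{T-CJM} then delivers the first two Langlands subquotients $L(a,\D_{k_0},b;\tau((\D_u)_\pm;\s))$, distinguished by their tempered parts.

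For the third subquotient, I would establish an analog of Lemma~\ref{lem:help0} in the present situation: since $|\D_{k_0}|>1$, the product $L(a,\D,b)\t\nu^{\a'}\rho$ should be irreducible and embed into $\d(\D_u)\t L(a,\D_{k_0},b)$. Parabolically inducing and applying the Grothendieck-group decomposition \eqref{eq:case1}, I would obtain $L(a,\D,b,\nu^{\a'}\rho;\s)$ as a subquotient of $\d(\D_u)\t L(a,\D_{k_0},b)\r\s$. To place it inside the summand $\d(\D_u)\r L(a,\D_{k_0},b;\s)$ rather than the other summand $\d(\D_u)\r L(a;\D_{k_0},\nu^{\a'-1}\rho,\ldots,\nu^{\a+1}\rho;\d(\nu^{\a}\rho;\s))$, I would invoke the exponent-ordering relation \eqref{BPLC}, exactly as in the derivation of \eqref{third-sq}.

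The fourth and fifth subquotients I would produce as in the final part of Lemma~\ref{first-le}: by the theorem in the introduction of \cite{T-seg}, $\d(\D)\r\s$ has two inequivalent square-integrable subrepresentations $\d(\D_\pm;\s)$, and Proposition 5.3 of \cite{T-CJM} yields
$$
L(a,\nu^{\a'}\rho,b;\d(\D_\pm;\s))\leq L(a,\nu^{\a'}\rho,b)\r\d(\D_\pm;\s)\leq L(a)\t\nu^{\a'}\rho\t L(b)\t\d(\D)\r\s.
$$
A Jacquet-module/support argument modelled on the derivation of \eqref{want to prove} is then used to show that these two subquotients occur inside $\d(\D_u)\r L(a,\D_{k_0},b;\s)$ and not in the complementary summand, by tracking the cuspidal exponents $c,c+1$ and $-\a'$ through $M^*_{GL}$ via \eqref{M*GL} and \eqref{m^*-seg}.

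I expect the last step to be the main obstacle: the intermediate block $b=([\a],[\a+1],\ldots,[\a'-1])$ and the possibly nonempty $a$ both complicate the bookkeeping, because one must verify (i) that the desired subquotients cannot be absorbed into an $L(a_1,\D_{k_0-1}\cup\D_{k_0})$-type term (handled by a highest-exponent argument on $c$ and $c+1$, as in Lemma~\ref{first-le}), and (ii) that they cannot arise from the other summand containing $\d(\nu^{\a}\rho;\s)$ as its tempered part (handled by counting occurrences of $\nu^{-\a'}\rho$ in the GL-type Jacquet module). Splicing these two arguments cleanly around the extra block $b$ will require a careful reorganization of the Jacquet-module computation performed in the proof of Lemma~\ref{first-le}, and this is where I anticipate the bulk of the technical work.
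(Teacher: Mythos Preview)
Your overall architecture matches the paper's: the same five irreducible subquotients, produced in the same three installments, with Proposition 5.3 of \cite{T-CJM}, Theorem 13.2 of \cite{T-irr}, and \cite{T-seg} invoked at the same places. But there are two concrete problems in your treatment of the fourth and fifth subquotients that are specific to the presence of $b$.

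First, you omit an entire elimination step. After establishing
$L(a,\nu^{\a'}\rho,b;\d(\D_\pm;\s))\leq L(a,\D_{k_0})\t L(b)\t\d(\D_u)\r\s$
(which is where your argument (i) on the $c,c+1$ exponents lands), you still must pass from $L(a,\D_{k_0})\t L(b)$ down to $L(a,\D_{k_0},b)$. Unlike the situation in Lemma~\ref{first-le}, this product is reducible: $\D_{k_0}=[\a',c]$ and the top segment $[\a'-1]$ of $b$ are linked, so in $R$ one has
$L(a,\D_{k_0})\t L(b)=L(a,\D_{k_0},b)+L(a,[\a'-1,c])\t L([\a,\a'-2]^t)$.
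The paper rules out the extra summand by a separate Jacquet-module argument, locating in $L(a,\nu^{\a'}\rho,b;\d(\D_\pm;\s))$ a term whose multisegment contains the singleton $[-\a']$ and then checking that no factor of
$M^*_{GL}(\d([\a'-1,c]))\t M^*_{GL}(L([\a,\a'-2]^t))\t M^*_{GL}(\d(\D_u))$
can produce a segment ending at $-\a'$. Your plan jumps directly from (i) to (ii), skipping this step; it is not absorbed by either of your two arguments.

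Second, in your step (ii) you propose to count occurrences of $\nu^{-\a'}\rho$ to exclude the summand $\d(\D_u)\r L(a,\D_{k_0},[\a+1,\a'-1]^t;\d([\a];\s))$. This does not work: the representation $L(a,\D_{k_0},[\a+1,\a'-1]^t)$ has $\nu^{\a'}\rho$ in its support (it is the bottom of $\D_{k_0}$), so $M^*_{GL}$ of it can contribute $\nu^{-\a'}\rho$, and together with the single $-\a'$ from $M^*_{GL}(\d(\D_u))$ you cannot bound the multiplicity by one. The paper counts $\nu^{-\a}\rho$ instead: since $L(a,\D_{k_0},[\a+1,\a'-1]^t)$ is supported in $[\a+1,\a+n]^{(\rho)}$, the exponent $-\a$ lies in neither that support nor its contragredient $[-\a-n,-\a-1]$, so it cannot arise there, and it also cannot come from $\mu^*(\d([\a];\s))$; thus $-\a$ occurs at most once on the right while it occurs twice on the left (once from $\tilde b$ and once from $\d(\D)$). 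As a minor aside, for the third subquotient the paper does not claim irreducibility of $L(a,\D,b)\t\nu^{\a'}\rho$ (which is in fact doubtful, since $[\a']$ is linked to $[\a'-1]\in b$); it instead proves the needed subquotient relation directly via highest derivatives in the Zelevinsky classification (Lemma~\ref{lem:help1}).
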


First we get that we have two non-equivalent sub quotients
\begin{equation}
L(a,\Delta_{k_0},b;\tau((\D_u)_\pm;\s))\leq \delta(\Delta_u)\times L(a,\Delta_{k_0},b;\s)
\end{equation}
in the same way as in the previous section. Therefore, the length is at least two.

Now we shall prove  the following simple
\begin{lemma}
\label{lem:help1}  If $|\D_k|=1$, then  we have
$$
L(a,\Delta_{k_0}\cup \D_u,b, 
\nu^{\alpha'}\rho)
\leq
\delta(\Delta_u)\times L(a,\Delta_{k_0},b).
$$
\end{lemma}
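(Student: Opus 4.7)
\emph{Plan.}
The key building block is Zelevinsky's length-two decomposition for linked segments. Here $\Delta_u=[\nu^{-\alpha'}\rho,\nu^{\alpha'}\rho]$ and $\Delta_{k_0}=[\nu^{\alpha'}\rho,\nu^c\rho]$ are Zelevinsky-linked, sharing only $\nu^{\alpha'}\rho$ and having union $\Delta_{k_0}\cup\Delta_u=[\nu^{-\alpha'}\rho,\nu^c\rho]$. Zelevinsky's theorem therefore gives
$$\delta(\Delta_u)\times\delta(\Delta_{k_0})=L(\Delta_u,\Delta_{k_0})+L(\nu^{\alpha'}\rho,\Delta_{k_0}\cup\Delta_u)$$
in $R$, and in particular $L(\nu^{\alpha'}\rho,\Delta_{k_0}\cup\Delta_u)\leq\delta(\Delta_u)\times\delta(\Delta_{k_0})$. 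This settles the special case $a=b=\emptyset$ of the lemma, and will be the seed of the argument.

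To handle general $a$ and $b$, I would combine this identity with the Grothendieck-group bound $L(c_1+c_2+c_3)\leq L(c_1)\times L(c_2)\times L(c_3)$, which is Proposition~8.4 of~\cite{Z} (the general-linear incarnation of Proposition~5.3 of~\cite{T-CJM}). Applying it with $c_1=a$, $c_2=(\nu^{\alpha'}\rho,\Delta_{k_0}\cup\Delta_u)$, $c_3=b$, together with the commutativity of $\times$ in $R$ and the fact that $\delta(\Delta_u)$ has cuspidal support in $[\nu^{-\alpha'}\rho,\nu^{\alpha'}\rho]$ which is disjoint from the support of $a$ (as $c\geq\alpha'+1$), I obtain
$$L(a,\Delta_{k_0}\cup\Delta_u,b,\nu^{\alpha'}\rho)\leq\delta(\Delta_u)\times L(a)\times\delta(\Delta_{k_0})\times L(b).$$
Writing $L(a)\times\delta(\Delta_{k_0})\times L(b)=L(a,\Delta_{k_0},b)+\Xi$ in $R$, with $\Xi\geq 0$ gathering the remaining Langlands subquotients (again by Proposition~8.4 of~\cite{Z}), the lemma reduces to showing that $L(a,\Delta_{k_0}\cup\Delta_u,b,\nu^{\alpha'}\rho)$ does not appear in $\delta(\Delta_u)\times\Xi$.

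The main obstacle is this last parameter-tracking step. Every $L(t')\leq\Xi$ has Langlands parameter strictly dominated by $(a,\Delta_{k_0},b)$ in the sense of~\eqref{BPLC}, and one would expand $\delta(\Delta_u)\times L(t')$ via the Hopf-algebra formula for $M^{\ast}(\delta(\Delta_u))$ recalled in Section~\ref{notation}, then verify that none of its irreducible subquotients carries the target Langlands parameter $(a,\Delta_{k_0}\cup\Delta_u,b,[\alpha'])$. The subtlety is that the singleton $[\alpha'-1]\in b$ is Zelevinsky-linked to $\nu^{\alpha'}\rho\in\Delta_u$, so $\delta(\Delta_u)\times L(t')$ admits subquotients with merged segments whose Langlands parameters must be compared case-by-case to the target; this is analogous to, and slightly more intricate than, the exponent and support analysis already used in Section~\ref{first stage} to establish \eqref{third-sq}, and it is exactly the sort of computation the paper performs there in the first-stage argument.
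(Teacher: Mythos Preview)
Your approach is genuinely different from the paper's, and the hard step you flag as ``the main obstacle'' is precisely what the paper circumvents.

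The paper does not track Langlands parameters through $\delta(\Delta_u)\times\Xi$ at all. Instead it applies the Zelevinsky involution $^t$: since $L(d)^t=Z(d)$ and $\delta(\Delta_u)^t=\mathfrak s(\Delta_u)$, the claim is equivalent to
\[
Z(a,\Delta_{k_0}\cup\Delta_u,b,\nu^{\alpha'}\rho)\ \le\ \mathfrak s(\Delta_u)\times Z(a,\Delta_{k_0},b).
\]
Now the highest derivative is used. It is multiplicative, and $\text{h.d.}(Z(d))=Z(d^-)$; since every segment in $b$ is a singleton, $b^-=\emptyset$, so
\[
\text{h.d.}\bigl(\mathfrak s(\Delta_u)\times Z(a,\Delta_{k_0},b)\bigr)=\mathfrak s(\Delta_u^-)\times Z(a^-,\Delta_{k_0}^-).
\]
One checks easily that $Z(a^-,\Delta^-)$ (with $\Delta=\Delta_{k_0}\cup\Delta_u$) is a subquotient of this, hence there is an irreducible subquotient of $\mathfrak s(\Delta_u)\times Z(a,\Delta_{k_0},b)$ whose highest derivative equals $Z(a^-,\Delta^-)$. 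Since an irreducible is determined by its cuspidal support together with its highest derivative, that subquotient must be $Z(a,\Delta_{k_0}\cup\Delta_u,b,\nu^{\alpha'}\rho)$. The proof is four lines.

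Your route is not wrong in principle, but you have not actually proved the lemma: everything reduces to excluding the target from $\delta(\Delta_u)\times\Xi$, and you only sketch how this might go. Note that $\Xi$ here is more complicated than in Section~\ref{first stage} because $L(a)\times\delta(\Delta_{k_0})\times L(b)$ has \emph{two} places where merging can occur (at $\Delta_{k-1}/\Delta_{k_0}$ and at $\Delta_{k_0}/[\alpha'-1]$), so the subquotient analysis would be a genuine case split. The derivative argument bypasses all of this, and in fact is the same trick used to prove Lemma~\ref{lem:help0} in \cite{HTd}.
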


\begin{proof}
Since in general $\ L(\Delta_1',\Delta_2',\dots,\Delta'_m)^t=
Z(\Delta_1',\Delta_2',\dots,\Delta'_m)$,  
it is enough to prove the lemma for the Zelevinsky classification.

  The highest (non-trivial) derivative of
$\mathfrak s(\Delta_u)\times Z(a,\Delta_{k_0},b)$ is $\mathfrak s(\Delta_u^-)\times
Z(a^-,\Delta_{k_0}^-)$. One can easily see that one 
subquotient of the last representation  is
$ Z(a^-,\Delta^-).$ Therefore, there must exist an irreducible subquotient of $\mathfrak s(\Delta_u)\times Z(a,\Delta_{k_0},b)$ whose highest derivative is $ Z(a^-,\Delta^-).$ The support and highest
derivative completely  determine the irreducible representation. One directly sees that
this representation is 
$
Z(a,\Delta_{k_0}\cup \D_u,b, \nu^{\alpha'}\rho)$. The proof is now complete
\end{proof}

The above lemma implies
$$
L(a,\Delta_{k_0}\cup \D_u,b, 
\nu^{\alpha'}\rho;\s)
\leq
L(a,\Delta_{k_0}\cup \D_u,b, 
\nu^{\alpha'}\rho)\r\s
\leq
\delta(\Delta_u)\times L(a,\Delta_{k_0},b)\r\s.
$$
By Lemma \ref{comp-series} we  have for the right hand side \quad 
$
\delta(\Delta_u)\t L(a,\Delta_{k_0},b)\rtimes\sigma
=
$
$$
\delta(\Delta_u)\t L(a,\Delta_{k_0},b;\sigma)
+\delta(\Delta_u)\t L(a;\Delta_{k_0},\nu^{\a'-1},\dots,\nu^{\a+1};\d(\nu^{\a};\sigma)).
$$
This implies \quad 
$
L(a,\Delta_{k_0}\cup \D_u,b, 
\nu^{\alpha'}\rho;\s)
\leq
$
$$
\delta(\Delta_u)\t L(a,\Delta_{k_0},b;\sigma)
+\delta(\Delta_u)\t L(a;\Delta_{k_0},\nu^{\a'-1},\dots,\nu^{\a+1};\d(\nu^{\a};\sigma)).
$$
Suppose 
$$
L(a,\Delta_{k_0}\cup \D_u,b, 
\nu^{\alpha'}\rho;\s)
\leq
\delta(\Delta_u)\t L(a;\Delta_{k_0},\nu^{\a'-1},\dots,\nu^{\a+1};\d(\nu^{\a};\sigma)).
$$
Using the properties of the irreducible sub quotients of the standard modules in the Langlands classification, we now  conclude in the same way as in the last section that this cannot be the case (the sum of all exponents on the left hand side which are not coming from the tempered representation of the classical group is the same as the sum of exponents of cuspidal representations which show up in the segments of $a$, in $\D_{k_0}$, and $\a'-1,\a'-2,\dots,\a'+1,\a'$, while the corresponding sum of the standard modules which come from the right hand side is the sum of exponents of cuspidal representations which show up in the segments of $a$, in $\D_{k_0}$, and $\a'-1,\a'-2,\dots,\a'+1$, which is strictly smaller (for $\a>0$) then we have on the left hand side).

This implies
\begin{equation}
L(a,\Delta_{k_0}\cup \D_u,b, 
\nu^{\alpha'}\rho;\s)
\leq
\delta(\Delta_u)\t L(a,\Delta_{k_0},b;\sigma).
\end{equation}
Therefore, $\delta(\Delta_u)\t L(a,\Delta_{k_0},b;\sigma)$ has length at least three.

The following  (a little bit longer) step will be to show that 
$\delta(\Delta_u)\t L(a,\Delta_{k_0},b;\sigma)$ has two additional irreducible sub quotients.

We start this step with an observe that
\begin{equation}
\label{start}
L(a,\nu^{\a'}\rho,b;\d(\D_\pm;\s))\leq
\hskip 60mm
\end{equation}
$$
\hskip 30mm L(a)\t L(b) \t \nu^{\a'}\rho\t\d(\D) \r\s \leq 
L(a)\t L(b)\t\d(\D_{k_0})\t \d(\D_u)\r\s
$$
If $a=\emptyset$, then formally 
$$
L(a,b,\nu^{\a'}\rho;\d(\D_\pm;\s))\leq
L(a,\D_{k_0})\t L(b)\t \d(\D_u)\r\s
$$
since $L(a)\t\d(\D_{k_0})=L(\D_{k_0})=L(a,\D_{k_0})$.

We shall now show that the above inequality holds also if $a\ne\emptyset$. Then staring with \eqref{start} we get
$$
L(a,b,\nu^{\a'}\rho;\d(\D_\pm;\s))\leq
L(a) \t\d(\D_{k_0}) \t L(b)\t \d(\D_u)\r\s=
$$
$$
L(a,\D_{k_0})\t L(b)\t \d(\D_u)\r\s +
L(a_1,\D_{k_0-1}\cup\D_{k_0})\t L(b)\t \d(\D_u)\r\s.
$$
Suppose
$$
L(a,b,\nu^{\a'}\rho;\d(\D_\pm;\s))
\leq
L(a_1,\D_{k_0-1}\cup\D_{k_0})\t L(b)\t \d(\D_u)\r\s.
$$

Observe that $L(a,\nu^{\a'}\rho,b;\d(\D_\pm;\s))\h L(\tilde a,\nu^{-\a'}\rho,\tilde b)\r \d(\D_\pm;\s) 
\h L(\tilde a,\nu^{-\a'}\rho,\tilde b)\r \d(\D)\t\s.$

This implies that the Langlands quotient has in the GL-type Jacquet module an irreducible sub quotient which has exponent $c$ in its Jacquet module, but does not have  $c+1$.

Observe that (on the level of semisimplifications) we have 
$$
s_{GL}(L(a_1,\D_{k_0-1}\cup\D_{k_0})\t L(b)\t \d(\D_u)\r\s)\leq
s_{GL}(L(a_1)\t \d(\D_{k_0-1}\cup\D_{k_0})\t L(b)\t \d(\D_u)\r\s)=
$$
$$
M^*_{GL}(L(a_1))\t M^*_{GL}(\d(\D_{k_0-1}\cup\D_{k_0}))\t M^*_{GL}(L(b))\t M^*_{GL}(\d(\D_u))\r(1\o \s).
$$

We cannot get any one of exponents $c$ and $c+1$ from $M^*_{GL}((L(a_1))$ or $M^*_{GL}(\d(\D_u))$ or $M^*_{GL}(L(b))$ (consider support as in the previous section). Therefore, it must come from
$$
M^*_{GL}(\d(\D_{k_0-1}\cup\D_{k_0}))=M^*_{GL}(\d([\a',d]))=\sum_{x= \a'-1}^{ d}
\delta([-x,-\a'])\times
\delta([x+1,d]),
$$
where $c+1\leq d$. The above formula  for $M^*_{GL}(\d([\a',d])) $ implies that whenever we have in the support $c$, it must come from  a segment which ends with $d$, and therefore, we must have in the support also $c+1$. Therefore, we cannot have only $c$. In this way we have proved that (in both cases)
$$
L(a,\nu^{\a'}\rho,b;\d(\D_\pm;\s))\leq
L(a,\D_{k_0})\t L(b)\t \d(\D_u)\r\s=
$$
$$
L(a,\D_{k_0},b)\t \d(\D_u)\r\s
+
L(a,[\a'-1,c])\t L([\a,\a'-2]^t)\t \d(\D_u)\r\s.
$$

Suppose
$$
L(a,\nu^{\a'}\rho,b;\d(\D_\pm;\s))\leq
L(a,[\a'-1,c])\t L([\a,\a'-2]^t)\t \d(\D_u)\r\s.
$$
Observe that
$$
L(a,\nu^{\a'}\rho,b;\d(\D_\pm;\s))\h 
\d(\tilde\D_1)\t\dots\t\d(\tilde\D_{k_0-1})\t 
\nu^{-\a'}\rho \t
\nu^{-\a'+1}\rho\t\dots\t \nu^{-\a}\rho\r \d(\D_\pm;\s)),
$$
which implies (because of unique irreducible subrepresentation of the right hand side)
$$
L(a,\nu^{\a'}\rho,b;\d(\D_\pm;\s))\h  L(\tilde a)\t \d([-\a',-\a])^t\r \d([-\a',c]_\pm;\s)
$$
$$
\h L(\tilde a)\t \d([-\a',-\a])^t\t \d([-\a',c])\r\s.
$$
Therefore, we have in the Jacquet module of the left hand side the irreducible representation
$$
 L(\tilde a)\o \d([-\a',-\a])^t\t \d([-\a',c])\o\s.
$$

Now we shall examine how we can get this from 
$$
\mu^*(L(a)\t\d([\a'-1,c])\t L([\a,\a'-2]^t)\t \d(\D_u)\r\s)
$$
a term of the form $\b\o\g$ such that the support of $\b$ is the same as of $\tilde a$. Grading and disjointness of supports "up to a contragredient" imply that we need to take $\b$ from $M^*(L(a))$ (we must take $L(\tilde a)\o1)$. This implies (using transitivity of Jacquet modules) that we need to have 
$$
\d([-\a',-\a])^t\t \d([-\a',c])\o\s\leq \mu^*(\d([\a'-1,c])\t L([\a,\a'-2]^t)\t \d(\D_u)\r\s),
$$
which implies
$$
\d([-\a',-\a])^t\t \d([-\a',c])\o\s\leq M^*_{GL}(\d([\a'-1,c])\t L([\a,\a'-2]^t)\t \d(\D_u))\o\s.
$$
Observe that in   the multisegment that represents  the left hand side, we have $[-\a']$. In particular, we have a segment which ends with $-\a'$. 

Such a segment (regarding ending at $-\a'$) we cannot get from $M^*_{GL}(L([\a,\a'-2]^t))$ (because of the support). Neither we can get it from $ M^*_{GL}(\d(\D_u))$ because of the formula:
$$
 M^*_{GL}(\d(\D_u))=\sum_{x= -\a'-1}^{ \a'}
\delta([-x,\a'])\times
\delta([x+1,\a']).
 $$ 

The only possibility is $M^*_{GL}(\d([\a'-1,c]))$. But segments coming from this term 
end with $c$ or $-\a'+1$. So we cannot get $-\a'$ for end.

Therefore, we have got a contradiction.

This implies
$$
L(a,\nu^{\a'}\rho,b;\d(\D_\pm;\s))\leq
L(a,\D_{k_0},b)\t \d(\D_u)\r\s.
$$
$$
=\d(\D_u)\r L(a,\D_{k_0},b;\sigma)
+
\d(\D_u)\r L(a,\D_{k_0},[\a+1,\a'-1]^t;\delta([\a];\sigma)).
$$
Suppose
$$
L(a,\nu^{\a'}\rho,b;\d(\D_\pm;\s))\leq
\d(\D_u)\r L(a,\D_{k_0},[\a+1,\a'-1]^t;\delta([\a];\sigma)).
$$

One directly sees (using the Frobenius reciprocity) that in the GL-type Jacquet module of the left hand side we have an irreducible term in whose support appears exponent $-\a$ two times.

This cannot happen on the right hand side. To see this, observe that the right hand side is
$$
\leq \d(\D_u)\t L(a,\D_{k_0},[\a+1,\a'-1]^t)\r \delta([\a];\sigma)).
$$
Observe that we cannot get $-\a$ from $L(a,\D_{k_0},[\a+1,\a'-1]^t)$ (consider support, and its contragredient). We cannot get it from $\delta([\a];\sigma))$ (since $\mu^*(\delta([\a];\sigma))=1\o \delta([\a];\sigma)+[\a]\o\sigma$). From  the formula for $M^*_{GL}(\d(\D_u))$) we see that we can get $-\a$ at most once (since it is negative).

 Therefore, this inequality cannot hold. This implies
\begin{equation}
L(a,\nu^{\a'}\rho,b;\d(\D_\pm;\s))\leq
\d(\D_u)\r L(a,\D_{k_0},b;\sigma).
\end{equation}
Therefore, $\d(\D_u)\r L(a,\D_{k_0},b;\sigma)$ has length at least five. The proof of the lemma is now complete.

Our second goal in this section is to prove

\begin{lemma}
\label{le-J-2}
The multiplicity of 
$$
\delta(\Delta_u)\o L(a,\D_{k_0},b;\sigma)
$$
in
$$
\mu^*(\delta(\Delta_u)\r L(a,\D_{k_0},b;\sigma))
$$
is at most 4 if  $k_0<k$ and card$(\D_{k_0})>1$.

\end{lemma}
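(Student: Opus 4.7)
My plan is to follow the same strategy as in the proof of Lemma~\ref{le-J-1}, using the Hopf-module identity
$$
\mu^*\bigl(\delta(\Delta_u)\rtimes L(a,\Delta_{k_0},b;\sigma)\bigr)=M^*(\delta(\Delta_u))\rtimes \mu^*\bigl(L(a,\Delta_{k_0},b;\sigma)\bigr)
$$
and counting, term by term, the ways a summand of $M^*(\delta(\Delta_u))$ can combine with a summand of $\mu^*(L(a,\Delta_{k_0},b;\sigma))$ to produce the target $\beta:=\delta(\Delta_u)\otimes L(a,\Delta_{k_0},b;\sigma)$.

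The leading contribution comes from combining $\delta(\Delta_u)\otimes 1$ from $M^*(\delta(\Delta_u))$ with $1\otimes L(a,\Delta_{k_0},b;\sigma)$ from $\mu^*(L(a,\Delta_{k_0},b;\sigma))$. The explicit formula for $M^*$ of the segment $[\nu^{-\alpha'}\rho,\nu^{\alpha'}\rho]$ yields multiplicity two for $\delta(\Delta_u)\otimes 1$ (from the two extreme indices), while $1\otimes L(a,\Delta_{k_0},b;\sigma)$ has multiplicity one. This accounts for a contribution of $2$ to the multiplicity of $\beta$.

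For any remaining contribution, the $M^*$-summand $A\otimes B$ has $A\neq \delta(\Delta_u)$; inspection of the formula shows that such an $A$ cannot contain $\nu^{-\alpha'}\rho$ in its cuspidal support without producing an unmatched duplicate of $\nu^{\alpha'}\rho$, so the companion $\mu^*$-summand must supply $\nu^{-\alpha'}\rho$ through its left factor $C$. A segment-by-segment analysis of $\mu^*(L(a,\Delta_{k_0},b;\sigma))$ via $M^*$ of each segment in the Langlands parameter $(a,\Delta_{k_0},b)$ shows that $\nu^{-\alpha'}\rho$ can only originate from the extreme summand $s=\alpha',\,t=c$ of $M^*(\delta(\Delta_{k_0}))$, since all segments in $a$ start strictly above $\alpha'$ and those in $b$ strictly below. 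Matching the total multi-set cuspidal support of $A\times C$ with that of $\delta(\Delta_u)$ then forces $C=\nu^{-\alpha'}\rho$ and $A=\delta([\nu^{-\alpha'+1}\rho,\nu^{\alpha'}\rho])$, and the latter appears in $M^*(\delta(\Delta_u))$ in exactly two summands, with right factors $\nu^{\alpha'}\rho$ and $\nu^{-\alpha'}\rho$, respectively.

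For each of these two candidate pairs the cuspidal supports on both sides of the required tensor product are multiplicity-free, so we are in the regular situation and each pair contributes at most one, giving the total bound $2+1+1=4$. The main technical point I expect to be the obstacle is this last step: for each of the two candidate $M^*$-summands one has to verify that the right-hand companion $D$ in $\mu^*(L(a,\Delta_{k_0},b;\sigma))$ is uniquely determined and that $L(a,\Delta_{k_0},b;\sigma)$ appears exactly once as a subquotient of $\nu^{\pm\alpha'}\rho\rtimes D$. This should follow from the uniqueness of the Langlands quotient together with Frobenius reciprocity applied on the line $X_\rho$, exactly as at the end of the proof of Lemma~\ref{le-J-1}.
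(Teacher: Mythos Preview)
Your overall strategy coincides with the paper's: isolate the contribution of $2$ coming from $\delta(\Delta_u)\otimes 1$ paired with $1\otimes L(a,\Delta_{k_0},b;\sigma)$, then argue that every remaining contribution has $C=\nu^{-\alpha'}\rho$, leaving two choices for $(A,B)$ and hence at most two more. The final regularity step is also the same.

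The gap is in your justification that support matching \emph{forces} $C=\nu^{-\alpha'}\rho$. Working through the standard-module bound (i.e., through $M^*$ of each segment in $(a,\Delta_{k_0},b)$) does not give this. For instance, take $A=\delta([\nu^{-\alpha'+2}\rho,\nu^{\alpha'}\rho])$ (the term $x=\alpha'-2,\,y=\alpha'$ in $M^*(\delta(\Delta_u))$) and $C$ with cuspidal support $\{\nu^{-\alpha'}\rho,\nu^{-\alpha'+1}\rho\}$: then the multiset support of $A\times C$ is exactly that of $\delta(\Delta_u)$, and such a left factor \emph{is} allowed by your standard-module bound (pick up $\nu^{-\alpha'}\rho$ from the term $s=\alpha',\,t=c$ of $M^*(\delta(\Delta_{k_0}))$ and $\nu^{-\alpha'+1}\rho$ from the singleton $[\alpha'-1]$ in $b$). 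So ``matching cuspidal supports'' alone does not pin down $C$; the singletons in $b$ can contribute exponents in $\pm[\alpha,\alpha'-1]$ and you have not excluded them.

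The paper closes this gap by a different route. It does not use the standard-module bound but the finer inequality
\[
L(a,\Delta_{k_0},b;\sigma)\ \le\ L(a,[\alpha'+1,c])\ \rtimes\ L([\alpha,\alpha']^{t};\sigma)
\]
(from Lemma~\ref{comp-series}), together with the explicit formula
\[
\mu^*\bigl(L([\alpha,\alpha']^{t};\sigma)\bigr)=\sum_{i=0}^{\alpha'-\alpha+1} L([\alpha'-i+1,\alpha']^{t})\tilde{\ }\ \otimes\ L([\alpha,\alpha'-i]^{t};\sigma),
\]
obtained by Aubert duality from the known $\mu^*(\delta([\alpha,\alpha'];\sigma))$. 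Since the support of $L(a,[\alpha'+1,c])$ lies strictly above $\alpha'$ in absolute value, its $M^*$ contributes only $1$ on the left. The crucial observation is then that the left factors $L([\alpha'-i+1,\alpha']^{t})\tilde{\ }$ are \emph{degenerate} (non-generic) once $i\ge 2$, whereas $\delta(\Delta_u)$ is generic; a product with a non-generic factor cannot contain a generic subquotient. Hence only $i\in\{0,1\}$ survive, and $i=1$ gives exactly $C=\nu^{-\alpha'}\rho$. This non-degeneracy argument is the missing ingredient in your proposal; it is not visible through the coarser standard-module bound you use.
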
 

\begin{proof} Denote
$$
\b:=\delta(\Delta_u)\o L(a,\D_{k_0},b;\sigma).
$$
If we take from $\mu^*(L(a,\D_{k_0},b;\sigma))$ the term $1\o L(a,\D_{k_0},b;\sigma)$, to get $\b$ for a sub quotient, we need to take from $M^*(\delta(\Delta_u))$ the term $ \delta(\Delta_u)\o1$  (we can take it two times - see the above formula for $M^*(\delta(\Delta_u))$). In this way we get multiplicity two.

Now we consider in $\mu^*(L(a,\D_{k_0},b;\sigma))$  terms different from $1\o L(a,\D_{k_0},b;\sigma)$ which can give $\b$ for a sub quotient.

Observe that by Lemma \ref{comp-series}
$$
L(a,\D_{k_0},b;\sigma)\leq L(a,[\a'+1,c])\r L([\a,\a']^t;\sigma) .
$$
Now the support forces that from $M^*(L(a,[\a'+1,c]))$ we must take $1\o L(a,[\a'+1,c])$. The only possibility which would not give a term of the form $1\o -$ is to take from $M^*(L([\a,\a']^t;\sigma))$
the term $[-\a']\o L([\a,\a'-1]^t;\sigma)$ (observe that we need to get a non-degenerate representation on the left hand side of $\o$ and use the formula $\mu^*(L([\a,\a']^t;\sigma))=\sum_{i=0}^{\a'-\a+1}L([\a'-i+1,\a']^t)\tilde{\ }\o L([\a,\a'-i]^t;\sigma)$ which follows directly from the formula for $\mu^*(\d([\a,\a'];\sigma))$).

Now we need to take from $M^*(\D_u))$ a term of form $\d([-\a'+1,\a'])\o-$, for which we have two possibilities (analogously as in the proof of former corresponding lemma; use the formula for $M^*(\D_u))$). Since on the left and right hand side of $\o$ we have regular representations (which are always multiplicity one), we get in this way at most two additional multiplicities. Therefore, the total multiplicity is at most 4.
\end{proof}

\section{End of proof of Proposition \ref{prop-main}}
\label{end-proof-prop}
\setcounter{equation}{0}

We continue with the notation introduced in the section \ref{key}.

A direct consequence of the claims that we have proved in the last two sections  is the following

\begin{corollary} Let $\Delta_{k+1}\ne\emptyset$ and $k\geq 1$.
Consider 
$$
L(\D_1,\dots,\D_k;\d(\D_{k+1};\s))^t=L(\D_1',\dots,\D_{k'}';\s).
$$
Then card($\D_i')>1$ for some $i$. Denote maximal such index by $k_0'$. 
Write 
$$
\Delta_{k_0}=[\nu^{\alpha+k-k_0}\rho,\nu^c\rho]=[\nu^{\alpha'}\rho,\nu^c\rho]. 
$$
Denote
$$
\Delta_u=[\nu^{-\alpha'}\rho,\nu^{\a'}\rho].
$$
Then  

\begin{enumerate}
\item The length of $\d(\D_u)^t\r L(\D_1,\dots,\D_k;\d(\D_{k+1};\s))$ is at least 5.

\item The multiplicity of of $\d(\D_u)^t\o L(\D_1,\dots,\D_k;\d(\D_{k+1};\s))$ in  
$$
\mu^*(\d(\D_u)^t\r L(\D_1,\dots,\D_k;\d(\D_{k+1};\s)))
$$
 is at most 4.
\end{enumerate}
\end{corollary}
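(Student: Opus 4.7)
The plan is to deduce the corollary from the two pairs of lemmas (Lemmas \ref{first-le}, \ref{le-J-1} in Section \ref{first stage} and Lemmas \ref{sec-le}, \ref{le-J-2} in Section \ref{second stage}) by applying the Aubert--Schneider--Stuhler involution $^t$. Observe that $^t$ interchanges the two classes of irreducible subquotients noted in the remark at the end of Section \ref{SQ}: the irreducible subquotients with $\D_{k+1}\ne\emptyset$ correspond bijectively, via $^t$, to the irreducible subquotients with $\D_{k+1}=\emptyset$. Concretely, $\g=L(\D_1,\dots,\D_k;\d(\D_{k+1};\s))$ with $\D_{k+1}\ne\emptyset$ and $k\geq 1$ maps to $\g^t=L(\D_1',\dots,\D_{k'}';\s)$, an irreducible subquotient of the same representation \eqref{first eq} (up to an appropriate shift of $n$), with the Langlands data satisfying the hypotheses of Sections \ref{first stage}--\ref{second stage}.

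Next I verify that $\g^t$ is neither the generalized Steinberg representation $\d([\nu^\a\rho,\nu^{\a+n}\rho];\s)$ nor the Langlands quotient $L(\nu^\a\rho,\dots,\nu^{\a+n}\rho;\s)$: since $\g$ is assumed not to be among these two, and $^t$ is a bijection on irreducibles interchanging this pair, the same holds for $\g^t$. In particular, by \eqref{more than 1} applied to $\g^t$, some segment $\D_i'$ has cardinality $>1$; let $k_0'$ denote the largest such index. Writing $\D_{k_0'}'=[\nu^{\a'}\rho,\nu^c\rho]$ and $\D_u=[\nu^{-\a'}\rho,\nu^{\a'}\rho]$, we are precisely in the framework of either Section \ref{first stage} (when $k_0'=k'$) or Section \ref{second stage} (when $k_0'<k'$). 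Thus Lemmas \ref{first-le}--\ref{le-J-1} or Lemmas \ref{sec-le}--\ref{le-J-2} apply to $\g^t$ and yield
\[
\text{length}\big(\d(\D_u)\r\g^t\big)\geq 5,\qquad \text{mult.\ of }\d(\D_u)\o\g^t\text{ in }\mu^*(\d(\D_u)\r\g^t)\leq 4.
\]

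Finally I transport these two inequalities back to $\g$ by applying $^t$ once more. Since $^t$ is an automorphism of the Grothendieck group $R(S)$ that carries irreducibles to irreducibles (\cite{Au}, Corollaire 3.9; \cite{ScSt}), it preserves the length of every finite length representation, and it preserves multiplicities of irreducible subquotients. Using the standard compatibility $(\pi\r\tau)^t=\pi^t\r\tau^t$ of the involution with parabolic induction on $R\otimes R(S)\to R(S)$ (which holds on the level of Grothendieck groups, up to signs irrelevant for counting irreducible subquotients), we get $(\d(\D_u)\r\g^t)^t=\d(\D_u)^t\r\g$, so the length statement (1) follows immediately. For (2), the analogous compatibility of $^t$ with the Jacquet functors --- namely $(\mu^*(\Pi))^t=\mu^*(\Pi^t)$ where $^t$ on $R\otimes R(S)$ is $(\ )^t\otimes(\ )^t$ --- transports the multiplicity bound: the multiplicity of $\d(\D_u)^t\o\g$ in $\mu^*(\d(\D_u)^t\r\g)$ equals the multiplicity of $\d(\D_u)\o\g^t$ in $\mu^*(\d(\D_u)\r\g^t)$, which is at most $4$.

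The main obstacle is not the chain of reductions but the careful verification that the hypotheses of Section \ref{first stage} or \ref{second stage} transfer cleanly under $^t$: one must check that $\g^t$ has the form $L(\D_1',\dots,\D_{k'}';\s)$ with the \emph{correct} reducibility parameter $\a$ still underlying the construction of $\D_u$, and that the maximal index $k_0'$ selected here coincides with the index $k_0$ used in the corresponding lemma. Once this bookkeeping on Langlands parameters under the involution is made, both parts of the corollary are immediate consequences of the results already proved.
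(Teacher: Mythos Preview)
Your approach is essentially the same as the paper's: apply the earlier lemmas to $\gamma^t$ (which has $\Delta_{k+1}'=\emptyset$) and transport the conclusions back via the Aubert--Schneider--Stuhler involution. One imprecision: the compatibility you invoke for Jacquet modules is not quite $(\mu^*(\Pi))^t=\mu^*(\Pi^t)$ with $^t\otimes{}^t$ on $R\otimes R(S)$; rather, the involution interchanges Jacquet functors for opposite parabolics, so on the $GL$-factor a contragredient appears, i.e.\ the multiplicity of $\beta\otimes\gamma'$ in $\mu^*(\Pi)$ equals that of $\tilde\beta^t\otimes(\gamma')^t$ in $\mu^*(\Pi^t)$. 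The paper's proof records exactly this: it passes from $\delta(\Delta_u)\otimes\tau^t$ to $\delta(\tilde\Delta_u)^t\otimes\tau$ and then observes $\delta(\tilde\Delta_u)\cong\delta(\Delta_u)$ because $\Delta_u=[\nu^{-\alpha'}\rho,\nu^{\alpha'}\rho]$ is self-contragredient (recall $\tilde\rho\cong\rho$). Since your $\Delta_u$ is self-contragredient, your conclusion is correct, but the justification should be amended accordingly. The remark about ``an appropriate shift of $n$'' is unnecessary: $\gamma$ and $\gamma^t$ are subquotients of the same induced representation.
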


\begin{proof}
Denote 
$$
\tau=L(\D_1,\dots,\D_k;\d(\D_{k+1};\s)).
$$
Now by Lemmas \ref{first-le} and \ref{sec-le} we know that
$$
\d(\D_u)\r \tau^t
$$ 
is a representation of length at least 5.
This implies that 
$$
\d(\D_u)^t\r\tau
$$  
has length $\geq 5$.

Further, Lemmas \ref{le-J-1} and \ref{le-J-2} imply that the multiplicity of $\d(\D_u)\o \pi^t$ in $\mu^*(\d(\D_u)\r \pi^t)$ is at most 4. This implies that 
  the multiplicity of $\d(\tilde \D_u)^t\o\pi\cong \d( \D_u)^t\o\pi$ in $\mu^*(\d(\D_u)^t\r\pi)$ is $\leq 4$.
This completes the proof of the corollary.
\end{proof}

This corollary, together with Lemmas \ref{first-le}, \ref{le-J-1}, \ref{sec-le} and imply Proposition \ref{prop-main}.

Later  in this paper we shall show how Proposition \ref{prop-main} implies in a simple way Theorem \ref{th-int-2}. Now we give another proof of the following result of Hanzer, Jantzen and Tadi\'c: 

\begin{theorem}
\label{main}
If 
$\gamma$
is an irreducible  sub quotient of 
$$
\nu^{\alpha+n}\rho\times\nu^{\alpha+n-1}\rho\times\cdots\times\nu^{\alpha}\rho\rtimes \sigma
$$
  different from $L([\a,\a+n]^{(\rho)};\s)$ and $L([\a+n]^{(\rho)},[\a+n-1]^{(\rho)},\dots,[\a]^{(\rho)},\s)$, then 
$$
L(\D_1,\dots,\D_k;\d(\D_{k+1};\s))
$$
is not unitarizable. 
\end{theorem}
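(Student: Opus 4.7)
The plan is to deduce Theorem \ref{main} directly from Proposition \ref{prop-main} via a Frobenius-reciprocity argument, exploiting the fact that unitarizable representations are completely reducible.

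First I would invoke Proposition \ref{prop-main} to produce an irreducible self-contragredient unitarizable representation $\pi$ of a general linear group such that $\pi\rtimes\gamma$ has length at least $5$, while the multiplicity of $\pi\otimes\gamma$ in $\mu^*(\pi\rtimes\gamma)$ is at most $4$. The hypotheses of Theorem \ref{main} on $\gamma$ (i.e., that it is neither the generalized Steinberg $\delta([\nu^\alpha\rho,\nu^{\alpha+n}\rho];\sigma)$ nor its Aubert--Schneider--Stuhler dual) are precisely those needed to apply Proposition \ref{prop-main}.

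Next, I would argue by contradiction: assume $\gamma$ is unitarizable. Since $\pi$ is a unitarizable (and self-contragredient) representation of a general linear group, parabolic induction preserves unitarizability, so $\pi\rtimes\gamma$ is a unitarizable representation of finite length. Consequently it is completely reducible: it decomposes as an orthogonal direct sum of at least $5$ irreducible subrepresentations $\theta_1,\dots,\theta_\ell$ with $\ell\geq 5$. For each irreducible summand $\theta_i$ we apply Frobenius reciprocity
$$
\text{Hom}_{S_{?}}(\theta_i,\pi\rtimes\gamma)\cong \text{Hom}_{GL\times S_{?}}(s_{(p)}(\theta_i),\pi\otimes\gamma),
$$
where $p$ is the rank of the general linear factor corresponding to $\pi$. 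The embedding $\theta_i\hookrightarrow\pi\rtimes\gamma$ then forces $\pi\otimes\gamma$ to occur as a quotient, hence as a subquotient, of $s_{(p)}(\theta_i)$, and therefore of $\mu^*(\theta_i)$.

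Summing over the $\ell\geq 5$ summands and using additivity of $\mu^*$ on direct sums yields
$$
5\cdot\pi\otimes\gamma\;\leq\;\ell\cdot\pi\otimes\gamma\;\leq\;\sum_{i=1}^\ell \mu^*(\theta_i)=\mu^*(\pi\rtimes\gamma),
$$
which directly contradicts the bound $5\cdot\pi\otimes\gamma\not\leq\mu^*(\pi\rtimes\gamma)$ supplied by Proposition \ref{prop-main}. Hence $\gamma$ cannot be unitarizable.

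The main conceptual obstacle was already absorbed into Proposition \ref{prop-main}, whose proof required the delicate length and multiplicity estimates. What remains here is essentially formal: the reduction depends only on the elementary principles that unitary parabolic induction preserves unitarizability, that unitary finite-length representations are semisimple, and that Frobenius reciprocity converts subrepresentations into Jacquet-module quotients. No further case analysis on the structure of $\gamma$ is needed, which is precisely the advantage of the uniform statement of Proposition \ref{prop-main} over the case-by-case arguments of \cite{HTd} and \cite{HJ}.
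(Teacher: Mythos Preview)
Your proposal is correct and follows essentially the same argument as the paper: invoke Proposition \ref{prop-main}, assume $\gamma$ unitarizable so that $\pi\rtimes\gamma$ is semisimple of length $\geq 5$, and use Frobenius reciprocity on each irreducible summand to force multiplicity $\geq 5$ of $\pi\otimes\gamma$ in $\mu^*(\pi\rtimes\gamma)$, contradicting the bound. The paper's version is phrased slightly differently (it embeds five irreducible subrepresentations and appeals to exactness of the Jacquet functor) but the logic is identical.
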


\begin{proof} Chose $\pi$ as in Proposition \ref{prop-main}.
Suppose that $\gamma$ is unitarizable. Then $\pi\r\gamma$ is unitarizable. Let $\tau$ be a sub quotient of $\pi\r\gamma$. Then $\tau\h \pi\r\gamma$. Now the Frobenius reciprocity implies that $\pi\o \gamma$ is in the Jacquet module of $\tau$.

We know that $\pi\r\gamma$ has length $\geq 5$. This (and unitarizability) implies that there are (at least) 5 different  irreducible subrepresentations of $\pi\r\gamma$. Denote them by $\tau_1,\dots,\tau_5$. Then
$$
\tau_1\oplus\dots\oplus\tau_5\h \pi\r\gamma.
$$
Since the Jacquet functor is exact, the first part of the proof implies that the multiplicity of $\pi\o\gamma$ in the Jacquet module of $\pi\r\gamma$ is at least 5. This contradicts to the second claim of Proposition \ref{prop-main}. The proof is now  complete.
\end{proof}

\section{Jantzen decomposition}
\label{Jantzen}
\setcounter{equation}{0}

In this section we shall recall of the basic results of C. Jantzen from \cite{J-supp}. We shall write them  in a slightly different way then in \cite{J-supp}. They are written there for the symplectic and the split odd-orthogonal series of groups. Since the Jantzen's paper is based on the formal properties of the representation theory of these groups (contained essentially in the structure of the twisted Hopf module which exists on the representations of these groups - see \cite{T-Str}), 
the results of \cite{J-supp} apply also whenever this structure is established. Therefore, it also holds for all the classical $p$-adic groups considered in \cite{Moe-T}\footnote{In the case of unitary groups one needs to replace usual contragredient by the contragredient twisted by the non-trivial element of the Galois group of the involved quadratic extension (see \cite{Moe-T}). The case of disconnected  even split orthogonal group is 
considered in \cite{J-CJM}.}.

A representation $\rho\in\mathcal C$\footnote{Recall, $\mathcal C$ is the set of all irreducible cuspidal representations of general linear groups.} is called a factor of an irreducible representation $\g$ of a classical group, if there exists an irreducible subquotient $\tau\o\g_{cusp}$ of $s_{GL}(\g)$ such  that $\rho$ is in the support of $\tau$.

 We have  already  used above the well known notion of (cuspidal) support of an irreducible representation of a general linear group introduced by J. Bernstein and A. V. Zelevinsky.
 Now we shall  introduce such notion for classical groups.
 We shall fix below  an irreducible cuspidal representation $\s$ of a classical group.
Let $X \subseteq \mathcal C$ and suppose that $X$ is self contragredient, i.e. that 
$$
\tilde X=X,
$$
where $\tilde X=\{\tilde\rho;\rho\in\ X\}.$
Following C. Jantzen, one says that an irreducible representation $\g$ of a classical group is supported by $X\cup\{\s\}$ 
if there exist $\rho_1,\dots,\rho_k$
from $X$ 
such that
$$
\g\leq \rho_1\t\dots\t\rho_k\r\s.
$$
For not-necessarily irreducible representation $\pi$ of a classical group, one says that it is supported by $X\cup\{\s\}$ 
if each irreducible subquotient of it is supported by that set.

\begin{definition}
Let 
$$
X=X_1\cup X_2
$$
be a partition of a selfcontragredient  $X\subseteq \mathcal C$. We shall say that this partition is regular if  $X_1$  is self contragredient\footnote{Then $X_2$ is also self contragredient}, and if  among $X_1$ and $X_2$ there is no reducibility, i.e. if 
$$
\rho\in X_1 \implies \nu\rho\not\in X_2.
$$
This is equivalent to say that $\rho_1\t\rho_2$ is irreducible for all $\rho_1\in X_1$ and $\rho_2\in X_2$.

For a partition $X=X_1\cup\dots \cup X_k$ we define to be regular in an analogous way.
\end{definition}

\begin{definition}
Let $\pi$ be a representation of $S_n$
supported in $X\cup\{\s\}$. Suppose that $X_1\cup X_2$ is a regular partition of a selfcontragredient  $X\subseteq \mathcal C$. Write $\mu^{\ast}(\pi)=\sum_i \b_i \otimes \g_i$, a sum
of irreducible representations in $R \otimes R[S]$. Let
$\mu_{X_1}^{\ast}(\pi)$ denote the sum of
every $\b_i \otimes \g_i$ in $\mu^{\ast}(\pi)$ such that the
support of $\b_i$ is contained in $X_1$
and the support of $\g_i$ is contained in
$X_2\cup\{\s\} $.

\end{definition}

Now we recall below the main results of \cite{J-supp}. As we have already mentioned, our presentation is slightly different from the presentation in \cite{J-supp}. In the rest of this section, $X_1\cup X_2$ will be a regular partition of a selfcontragredient  $X\subseteq \mathcal C$.

\begin{lemma} If $\pi$ has support contained in $X\cup\{\s\}$,
then $\mu_{X_1}^{\ast}(\pi)$ is nonzero.
\end{lemma}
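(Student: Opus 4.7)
The plan is to produce an explicit nonzero term in $\mu^*_{X_1}(\pi)$ by combining Frobenius reciprocity with the regularity hypothesis. Since $\pi$ has support contained in $X \cup \{\sigma\}$, by definition $\pi \hookrightarrow \rho_1 \times \cdots \times \rho_k \rtimes \sigma$ for some $\rho_i \in X$. The first step is to rearrange the factors so that all those lying in $X_1$ precede all those lying in $X_2$. This is where the regularity of the partition enters: regularity is equivalent to the irreducibility of $\rho' \times \rho''$ whenever $\rho' \in X_1$ and $\rho'' \in X_2$, which lets us interchange adjacent factors of different types without changing the induced representation. After this rearrangement we get an embedding
$$
\pi \hookrightarrow \tau_1 \times \tau_2 \rtimes \sigma,
$$
where $\tau_j$ is the product of the $\rho_i$ lying in $X_j$, so $\supp(\tau_j) \subseteq X_j$. (We do not need $\tau_1$ or $\tau_2$ to be irreducible.)

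Second, set $k_1 = \deg \tau_1$ and apply the Frobenius reciprocity recalled at the end of Section~\ref{notation} to the maximal parabolic with Levi $GL(k_1,F) \times S_{\ast}$. The nonzero map above yields a nonzero map
$$
s_{(k_1)}(\pi) \longrightarrow \tau_1 \otimes (\tau_2 \rtimes \sigma).
$$
Pick any irreducible constituent of its image; such a constituent is simultaneously an irreducible subquotient of the right-hand side and an irreducible subquotient of $s_{(k_1)}(\pi)$. Being a constituent of a tensor product, it has the form $\beta \otimes \gamma$, with $\beta$ an irreducible subquotient of $\tau_1$ and $\gamma$ an irreducible subquotient of $\tau_2 \rtimes \sigma$. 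Hence $\supp(\beta) \subseteq X_1$, and since $\supp(\tau_2) \subseteq X_2$ while $\sigma$ is cuspidal, every irreducible subquotient of $\tau_2 \rtimes \sigma$ has support in $X_2 \cup \{\sigma\}$, so $\supp(\gamma) \subseteq X_2 \cup \{\sigma\}$.

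The term $\beta \otimes \gamma$ therefore appears with positive multiplicity in $s.s.(s_{(k_1)}(\pi))$, hence in $\mu^*(\pi)$, and it satisfies exactly the two support conditions defining $\mu^*_{X_1}(\pi)$. Consequently $\mu^*_{X_1}(\pi) \geq \beta \otimes \gamma > 0$, which gives the lemma. The only nontrivial point in this plan is the rearrangement step in the first paragraph: one must verify that the irreducibility $\rho' \times \rho'' \cong \rho'' \times \rho'$ for $\rho' \in X_1$, $\rho'' \in X_2$ really does allow one to carry the embedding through the permutation of the factors of $\rho_1 \times \cdots \times \rho_k$. This is a standard consequence of the fact that an irreducible representation admits a unique (up to the order) parabolic embedding compatible with its cuspidal support, so the reshuffled induced representation contains the same irreducible constituents, and $\pi$ embeds into the reordered product as well.
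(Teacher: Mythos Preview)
Your argument is correct in substance. Note that the paper itself does not prove this lemma; it is recalled from Jantzen \cite{J-supp} without proof, so there is no ``paper's own proof'' to compare against. Your approach is the natural one and is essentially what underlies Jantzen's original argument.

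A few small points to tighten. First, the lemma as stated does not assume $\pi$ irreducible; you should say one word about reducing to that case (any irreducible subquotient $\pi'$ of $\pi$ satisfies $\mu^*_{X_1}(\pi')\leq \mu^*_{X_1}(\pi)$). Second, the embedding $\pi\hookrightarrow \rho_1\times\cdots\times\rho_k\rtimes\sigma$ is not literally ``by definition'' (the definition of support uses $\leq$, i.e.\ subquotient); for irreducible $\pi$ it follows from Jacquet's subrepresentation theorem applied to the cuspidal support. Third, your final paragraph over-complicates the rearrangement step: you do not need any uniqueness-of-embedding statement. Since $\rho'\times\rho''\cong\rho''\times\rho'$ as representations whenever $\rho'\in X_1$ and $\rho''\in X_2$ (by regularity), the reordered induced representation is \emph{isomorphic} to the original one, and the embedding of $\pi$ transports directly. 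With these cosmetic fixes the proof is complete.
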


\begin{definition}
Suppose $\b$ is a representation of a general linear group supported in
$X$.
Write $M^{\ast}(\b)=\sum_i \tau_i \otimes \tau'_i$, a sum of
irreducible representations in $R \otimes R$.
Let $M_{X_1}^{\ast}
(\b)$ denote the sum of every summand  $\tau_i \otimes \tau'_i$
in $M^{\ast}(\b)$ such that the support of $\tau_i$ is contained in
$X_1$ and the support of $\tau'_i$ is
contained in $X_2$.

\end{definition}

\begin{proposition}
Suppose $\b$ is a representation of a general linear group
with the support contained in $X$
and $\g$ a representation of $S_k$ with the support contained
in $X\cup\{\s\}$. 
Then,
\[
\mu_{X_1}^{\ast}(\b \rtimes \g)
=M_{X_1}^{\ast}(\b) \rtimes
\mu_{X_1}^{\ast}(\g).
\]

\end{proposition}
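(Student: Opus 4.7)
The plan is to derive the identity by applying the support filter $[\,\cdot\,]_{X_1}$ to both sides of the structure formula \eqref{eq: structure}, which in our setting reads
\[
\mu^{\ast}(\beta \rtimes \gamma) = M^{\ast}(\beta) \rtimes \mu^{\ast}(\gamma).
\]
The left-hand side, once filtered, is by definition $\mu_{X_1}^{\ast}(\beta \rtimes \gamma)$, so the entire proof reduces to showing that the filter commutes with the multiplication on $R \otimes R(S)$, i.e.\ that the $X_1$-filter of the right-hand side equals $M_{X_1}^{\ast}(\beta) \rtimes \mu_{X_1}^{\ast}(\gamma)$.

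First, I would write $M^{\ast}(\beta) = \sum_i \tau_i \otimes \tau_i'$ and $\mu^{\ast}(\gamma) = \sum_j \beta_j \otimes \gamma_j$ as sums of irreducibles in $R \otimes R$ and $R \otimes R(S)$ respectively. Then the product expands as
\[
M^{\ast}(\beta) \rtimes \mu^{\ast}(\gamma) = \sum_{i,j} (\tau_i \times \beta_j) \otimes (\tau_i' \rtimes \gamma_j).
\]
The crucial observation is that cuspidal support is additive under $\times$ and under $\rtimes$: every irreducible subquotient of $\tau_i \times \beta_j$ has cuspidal support $\mathrm{supp}(\tau_i) + \mathrm{supp}(\beta_j)$, and every irreducible subquotient of $\tau_i' \rtimes \gamma_j$ has cuspidal support equal to $\mathrm{supp}(\tau_i') \cup \widetilde{\mathrm{supp}(\tau_i')}$ together with the cuspidal support of $\gamma_j$. (Here I use that $X_2 = \widetilde{X_2}$, which follows from $X$ and $X_1$ being self-contragredient.)

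Second, I would invoke the regularity of the partition $X = X_1 \cup X_2$. Since $X_1 \cap X_2 = \emptyset$, the condition $\mathrm{supp}(\tau_i) + \mathrm{supp}(\beta_j) \subseteq X_1$ is equivalent to the conjunction $\mathrm{supp}(\tau_i) \subseteq X_1$ and $\mathrm{supp}(\beta_j) \subseteq X_1$. Similarly, the condition that $\tau_i' \rtimes \gamma_j$ has all irreducible subquotients supported in $X_2 \cup \{\sigma\}$ decomposes as $\mathrm{supp}(\tau_i') \subseteq X_2$ together with $\gamma_j$ being supported in $X_2 \cup \{\sigma\}$. (Regularity ensures, furthermore, that no reducibility between $X_1$ and $X_2$ can blur the distinction---all mixed products decompose cleanly along support strata.)

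Third, collecting: filtering the expanded right-hand side by the defining conditions of $\mu_{X_1}^{\ast}$ (left factor supported in $X_1$, right factor supported in $X_2 \cup \{\sigma\}$) selects exactly the pairs $(i,j)$ for which $\mathrm{supp}(\tau_i) \subseteq X_1$, $\mathrm{supp}(\tau_i') \subseteq X_2$, $\mathrm{supp}(\beta_j) \subseteq X_1$, and $\gamma_j$ is supported in $X_2 \cup \{\sigma\}$. These are precisely the summands of $M_{X_1}^{\ast}(\beta)$ and $\mu_{X_1}^{\ast}(\gamma)$, and since the conditions on $(\tau_i, \tau_i')$ and $(\beta_j, \gamma_j)$ are independent, the filtered sum factors as
\[
\Bigl(\sum_{i \in I_1} \tau_i \otimes \tau_i'\Bigr) \rtimes \Bigl(\sum_{j \in J_1} \beta_j \otimes \gamma_j\Bigr) = M_{X_1}^{\ast}(\beta) \rtimes \mu_{X_1}^{\ast}(\gamma).
\]
Combined with the filtered form of the left-hand side, this yields the proposition.

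The main obstacle is bookkeeping the supports across the Grothendieck-level identity \eqref{eq: structure}: one must verify that the support filter is well-defined on $R \otimes R(S)$ (immediate from the fact that cuspidal support is an invariant of irreducibles) and that it is multiplicative with respect to $\rtimes$ in the required sense. The latter is exactly the content of regularity of the partition, which disentangles the $X_1$ and $X_2$ contributions both in the $\times$-factor on the left and in the $\rtimes$-factor on the right.
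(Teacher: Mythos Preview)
Your argument is correct and is the natural one: apply the support filter to both sides of the structure formula \eqref{eq: structure} and check that the filter factors through the $\rtimes$-product on $R\otimes R(S)$ because cuspidal supports add under $\times$ and $\rtimes$ while $X_1$ and $X_2$ are disjoint and each self-contragredient. The paper does not give its own proof of this proposition; it is stated there as one of the results recalled from Jantzen \cite{J-supp}, so there is nothing to compare against beyond noting that your derivation is exactly how one proves it.

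One small remark: you invoke ``regularity'' of the partition as though the no-linkage clause ($\rho\in X_1\Rightarrow \nu\rho\notin X_2$) were doing work here, but in fact only disjointness and self-contragredience of $X_1$ and $X_2$ are used in your support-counting argument. The no-linkage condition becomes essential in later statements (e.g.\ for the bijection $\Psi_{X_1,X_2}$ and the Langlands-parameter compatibility in Theorem~\ref{main-Jantzen}), not for this identity.
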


\begin{corollary}
\label{cor-mu-mod-X}
Suppose $\b$ has the support contained in $X_1$
and $\g$ has the support contained in $X_2\cup
\{\s \}$. Then
\begin{enumerate}
\item 
\[
\mu_{X_1}^{\ast}(\b \rtimes \g)=M^*_{GL}(\b) \otimes
\g.
\]

\item 
Write  
$$
s_{GL}(\g)=\Xi \otimes \s
$$
in the Grothendieck group\footnote{Clearly, $\Xi$ does not need to be irreducible.}.
Then
\[
\mu_{X_2}^{\ast}(\b \rtimes \g)
=\Xi \otimes \b\r\s.
\]

\end{enumerate}
\end{corollary}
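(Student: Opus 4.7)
The plan is to derive both identities as straightforward consequences of the preceding Proposition, which gives $\mu_{X_1}^{\ast}(\beta \rtimes \gamma) = M_{X_1}^{\ast}(\beta) \rtimes \mu_{X_1}^{\ast}(\gamma)$ (and its analog with $X_1$ replaced by $X_2$, by symmetry of the partition). The main task therefore reduces to computing the four pieces $M_{X_i}^{\ast}(\beta)$ and $\mu_{X_i}^{\ast}(\gamma)$ for $i=1,2$ under the stronger hypotheses that $\beta$ is supported entirely in $X_1$ and $\gamma$ entirely in $X_2 \cup \{\sigma\}$. The key observation used throughout is that $X_1 \cap X_2 = \emptyset$ (partition) and $\tilde X_1 = X_1$, $\tilde X_2 = X_2$ (self-contragredience), so that any representation simultaneously supported in $X_1$ and $X_2$ must be trivial.

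For the first ingredient, I would compute $M_{X_1}^{\ast}(\beta)$ by writing $m^{\ast}(\beta) = \sum x \otimes y$ and using formula \eqref{M*GL} together with the definition of $M^\ast$, so that every term of $M^{\ast}(\beta)$ has both tensor factors supported in $X_1$ (by self-contragredience). To lie in $M_{X_1}^{\ast}(\beta)$ the second tensor factor must be supported in $X_2$, hence must be $1$; the first factor then runs over all of $M_{GL}^{\ast}(\beta)$, giving $M_{X_1}^{\ast}(\beta) = M_{GL}^{\ast}(\beta) \otimes 1$. Similarly $M_{X_2}^{\ast}(\beta) = 1 \otimes \beta$, since now the first factor is forced to be trivial and then the only surviving term of $m^\ast$ is $1 \otimes \beta$.

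For the second ingredient, I would expand $\mu^{\ast}(\gamma) = \sum \beta_i \otimes \gamma_i$ and observe that every $\beta_i$ is supported in $X_2$ and every $\gamma_i$ in $X_2 \cup \{\sigma\}$. For $\mu_{X_1}^{\ast}(\gamma)$, the condition that $\beta_i$ be supported in $X_1$ forces $\beta_i = 1$, so only the canonical term $1 \otimes \gamma$ survives. For $\mu_{X_2}^{\ast}(\gamma)$, the requirement that $\gamma_i$ be supported in $X_1 \cup \{\sigma\}$ combined with support in $X_2 \cup \{\sigma\}$ forces $\gamma_i$ to have empty GL-part, i.e.\ $\gamma_i = \sigma$; the surviving terms are exactly those contributing to $s_{GL}(\gamma) = \Xi \otimes \sigma$, so $\mu_{X_2}^{\ast}(\gamma) = \Xi \otimes \sigma$.

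Finally, I would plug these into the Proposition and use the module structure $(a \otimes b) \rtimes (c \otimes d) = (a \times c) \otimes (b \rtimes d)$: part (1) becomes $(M_{GL}^{\ast}(\beta) \otimes 1) \rtimes (1 \otimes \gamma) = M_{GL}^{\ast}(\beta) \otimes \gamma$, and part (2) becomes $(1 \otimes \beta) \rtimes (\Xi \otimes \sigma) = \Xi \otimes (\beta \rtimes \sigma)$. There is no real obstacle here beyond the bookkeeping of supports; the only subtle point worth writing down carefully is the forcing argument (two disjoint support conditions plus self-contragredience trivialize a factor), which is what makes the ``only $1$'' and ``only $\sigma$'' collapses valid.
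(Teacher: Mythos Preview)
Your proof is correct and is exactly the derivation the paper has in mind: the corollary is stated without proof, as an immediate consequence of the preceding Proposition, and you have written out precisely the routine support-bookkeeping that makes it immediate. The computations $M_{X_1}^*(\beta)=M_{GL}^*(\beta)\otimes 1$, $M_{X_2}^*(\beta)=1\otimes\beta$, $\mu_{X_1}^*(\gamma)=1\otimes\gamma$, and $\mu_{X_2}^*(\gamma)=\Xi\otimes\sigma$ are all correct, and plugging them into the Proposition gives the result.
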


\begin{definition}
\label{def}
Suppose $\pi$ is an irreducible representation of $S_n$ supported
in $X\cup\{\s\}$. Fix $i \in \{1, 2 \}$.
Then there exists an irreducible $\b_i \otimes \g_i$
with $\b_i$ supported on $X_{3-i}$ and $\g_i$ supported on
$X_i\cup\{\s\}$ such that
\[
\pi \hookrightarrow \b_i\rtimes \g_i.
\]
The representation 
$
\g_i
$
is uniquely determined by the above requirement, and it is denoted by
$$
X_i(\pi).
$$
Further, 
\begin{equation}
\label{right-factor}
\mu_{X_{3-i}}^{\ast}(\pi)
\leq \mu_{X_{3-i}}^{\ast}
        (\b_i \rtimes \g_i) 
=
\displaystyle
M^*_{GL}(\b_i)
\otimes \g_i.
\end{equation}

\end{definition}

Now we shall recall of the key theorem from the Jantzen's paper \cite{J-supp}:

\begin{theorem} (Jantzen) 
\label{main-Jantzen}
Suppose that $X_1\cup X_2$ is a regular partition of a selfcontragredient subset $X$ of $\mathcal C$,
and $\s$ an irreducible cuspidal representation of $S_r $.
Let $Irr(X_i; \s)$ denote the set of all irreducible
representations of all $S_n $, $n \geq 0$, supported on $X_i\cup\{\s\}$, and similarly for $Irr(X; \s).$

Then the map 
$$
Irr(X; \s)\longrightarrow Irr(X_1; \s) \times  Irr(X_2; \s),
$$
$$
\hskip10mm\pi\qquad \longmapsto \qquad (X_1(\pi), X_2(\pi))
$$
is a bijective correspondence. Denote the inverse mapping  by
$$
\Psi_{X_1,X_2}.
$$

\noindent
For  $\g_i \in Irr(X_i; \s)$ these  bijective correspondence have the following properties:
\begin{enumerate}

\item If $\g_i $
        is a representation of $S_{n_i+r} $,
        then 
        $$
        \pi=\Psi_{X_1,X_2}(\g_1, \g_2)
        $$
        is a representation of $S_{n_1+n_2+r} $

\item $\widetilde{\Psi_{X_1,X_2}(\g_1,\g_2)}
        =\Psi_{X_1,X_2}(\widetilde{\g_1}, \widetilde{\g_2})$
        and $X_i(\tilde{\pi})=\widetilde{X_i(\pi)}$,
        where $\tilde{\ }$ denotes contragredient.

\item ${\Psi_{X_1,X_2}(\g_1, \g_2)}^t
        =\Psi_{X_1,X_2}({\g_1}^t, {\g_2}^t)$
        and $X_i({\pi}^t)={X_i(\pi)}^t$,
        where $^t$ denotes the involution of  Aubert-Schneider-Stuhler .

\item Suppose that
\[
s_{GL}(\g_i)=\displaystyle{\sum_j}c_j(X_i)\tau_j(X_i)
\otimes \s,
\]
where $\tau_j(X_i)$ is an irreducible representation and
$c_j(X_i)$ its multiplicity.
Then 
\[
\begin{array}{l}
\mu_{X_i}^{\ast}(\Psi_{X_1,X_2}(\g_1, \g_2))
\\
\mbox{\ \ \ \ \ \ \ }
=\displaystyle{\sum_{j}}c_{j}(X_i) 
\tau_{j}(X_i) 
\otimes \g_{3-i}
\end{array}
\]

\item
Let $\b= \b(X_1)\times   \b(X_2)$
be an irreducible representation
of a general linear group with support of $\b(X_i)$ contained in $X_i$, $i=1,2$,
and $\Psi=\Psi_{X_1,X_2}(\g_1, \g_2)$
an irreducible representation of $S_k$ with support contained
in $X\cup\{\s\}$.
(We allow the possibility that $\b(X_i)=1$ or
$\g_i=\s$.) Suppose
\[
\b(X_i) \rtimes \g_i=
\sum_j m_j(X_i) \g_j(X_i; \s),
\]
with $\g_j(X_i;\s)$ irreducible and $m_j(X_i)$ its
multiplicity. Then,
\[
\b \rtimes \Psi
=\sum_{j_1, j_2} (m_{j_1}(X_1)  m_{j_2}(X_2))
\Psi_{X_1,X_2}(\g_{j_1}(X_1; \s), \g_{j_2}(X_2; \s)).
\]

\item
$\Psi_{X_1,X_2}(\g_1, \g_2)$ is
tempered (resp. square-integrable) if and only if
$\g_1, \g_2$ are both
tempered (resp. square-integrable).

\item
Suppose, in the subrepresentation setting in "tempered" formulation  of the Langlands classification,
\[
\g_i=L(\nu^{\alpha_1}\tau_1(X_i),
\dots, \nu^{\alpha_{\ell}} \tau_{\ell}(X_i); T(X_i; \s))
\]
for $i=1, 2$ (n.b. recall that $\tau_j(X_i)$ may be the
trivial representation of $GL(0,F)$; $T(X_i; \s)$ may just
be $\s$). Then,
\[
\begin{array}{rl}
\Psi_{X_1,X_2}(\g_1,\g_2)
&=L(\nu^{\alpha_1}\tau_1(X_1)
\times  \nu^{\alpha_1}\tau_1(X_2), 
\\
& \dots, \nu^{\alpha_{\ell}}\tau_{\ell}(X_1) \times
\nu^{\alpha_{\ell}}\tau_{\ell}(X_2);
\Psi_{X_1,X_2}(T(X_1; \s),  T(X_2; \s))).
\end{array}
\]
In the other direction, if
\[
\pi=L(\nu^{\alpha_1}\tau_1(X_1)
 \times \nu^{\alpha_1}\tau_1(X_2),\dots, \nu^{\alpha_{\ell}}\tau_{\ell}(X_1) \times\nu^{\alpha_{\ell}}\tau_{\ell}(X_2);
T(X; \s)),
\]
then
\[
X_i(\pi)=L(\nu^{\alpha_1}\tau_1(X_i), \dots, \nu^{\alpha_{\ell}}
\tau_{\ell}(X_i); X_i(T(X; \s))).
\]
(In the quotient setting of the Langlands classification, the same
results hold.)

\item
Suppose,
\[
\mu^{\ast}(\g_i)=\sum_{j} n_{j}(X_i)
\eta_{j}(X_i) \otimes \theta_{j}(X_i; \s),
\]
with $\eta_{j}(X_i) \otimes \theta_{j}(X_i; \s)$
irreducible and $n_{j}(X_i)$ its multiplicity. Then,
\[
\begin{array}{l} \hskip10mm
\mu^{\ast}(\Psi_{X_1,X_2}(\g_1,\g_2)) \\
\mbox{\ \ \ \ \ \ \ \ \ }
=\displaystyle{\sum_{j_1, j_2}} (n_{j_1}(X_1)  n_{j_2}(X_2))
(\eta_{j_1}(X_1) \times  \eta_{j_2}(X_2)) \otimes
\Psi_{X_1,X_2}(\theta_{j_1}(X_1; \s),  \theta_{j_2}(X_2; \s)).
\end{array}
\]

\item Let $X=X_1\cup X_2\cup X_3$ be a regular partition and $\pi\in Irr(X;\s)$. Then
$$
X_1\big((X_1\cup X_2)(\pi)\big)=X_1\big((X_1\cup X_3)(\pi)\big).
$$
In the other direction we have
$$
\Psi_{X_1\cup X_2,X_3}\big(\Psi_{X_1, X_2}(\pi_1,\pi_2),\pi_3\big)
=
\Psi_{X_1, X_2\cup X_3}\big(\pi_1,\Psi_{X_2, X_3}(\pi_2,\pi_3)\big)
$$
for $\pi_i\in Irr(X_i;\s)$.

\end{enumerate}

\end{theorem}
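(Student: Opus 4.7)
The plan is to construct the bijection $\Psi_{X_1,X_2}$ via a canonical extraction of $X_i(\pi)$ from Jacquet-module data, then derive properties (1)--(9) from the canonicity of the construction. The regularity hypothesis will be used repeatedly to commute GL-factors coming from $X_1$ with those coming from $X_2$ and to make the Hopf-module computations decouple between the two pieces.

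For \emph{existence} of $X_i(\pi)$, I would start with any embedding $\pi\hookrightarrow \rho_1\times\cdots\times\rho_k\rtimes\sigma$ with all $\rho_j\in X$. Regularity forces $\rho_a\times\rho_b$ to be irreducible, hence $\rho_a\times\rho_b\cong\rho_b\times\rho_a$, whenever $\rho_a\in X_1$ and $\rho_b\in X_2$; applying these transpositions repeatedly we may arrange that all $X_{3-i}$-factors precede all $X_i$-factors. Replacing the trailing piece by an irreducible subrepresentation yields $\pi\hookrightarrow\beta_{3-i}\rtimes\gamma_i$ with $\beta_{3-i}$ supported on $X_{3-i}$ and $\gamma_i\in Irr(X_i;\sigma)$. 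For \emph{uniqueness} of $\gamma_i$, apply $\mu^*_{X_{3-i}}$ to both sides; Corollary~\ref{cor-mu-mod-X}(1) gives $\mu^*_{X_{3-i}}(\beta_{3-i}\rtimes\gamma_i)=M^*_{GL}(\beta_{3-i})\otimes\gamma_i$, so by \eqref{right-factor} every irreducible summand of $\mu^*_{X_{3-i}}(\pi)$ has $\gamma_i$ as its second tensor factor. Since $\mu^*_{X_{3-i}}(\pi)\neq 0$ (first lemma of the section), $\gamma_i$ is determined.

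For \emph{bijectivity}: surjectivity is the constructive half. Given $(\gamma_1,\gamma_2)$, pick $\tau_i$ supported on $X_i$ with $\gamma_i\hookrightarrow\tau_i\rtimes\sigma$. Then $\tau_2\rtimes\gamma_1\hookrightarrow\tau_2\times\tau_1\rtimes\sigma\cong\tau_1\times\tau_2\rtimes\sigma$ by regularity, so any irreducible subrepresentation $\pi$ of $\tau_2\rtimes\gamma_1$ that also embeds in $\tau_1\rtimes\gamma_2$ satisfies $X_1(\pi)=\gamma_1$ and $X_2(\pi)=\gamma_2$ by the uniqueness step. For injectivity, suppose $X_i(\pi)=X_i(\pi')$ for $i=1,2$. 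Using the Hopf formula \eqref{eq: structure}, one computes $\mu^*_{X_1}$ and $\mu^*_{X_2}$ of both sides and shows, via regularity, that the full $\mu^*(\pi)$ is reconstructed from the pair $(\mu^*_{X_1}(\pi),\mu^*_{X_2}(\pi))$; since the latter depends only on $(X_1(\pi),X_2(\pi))$, the cuspidal supports and Jacquet modules of $\pi$ and $\pi'$ coincide, forcing $\pi\cong\pi'$.

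Properties (1)--(9) would then follow from the canonicity. Property (1) is grading bookkeeping; (2) and (3) use that contragredient and Aubert--Schneider--Stuhler involution preserve the regular partition and interchange sub with quotient while preserving irreducibility; (4) restates Corollary~\ref{cor-mu-mod-X}(1); (5) follows from (4) combined with the embedding $\pi\hookrightarrow\beta\rtimes\Psi$ and transitivity of induction; (6) uses that temperedness and square-integrability are read off $\mu^*$-exponent inequalities which decouple along the partition; (7) is (5) combined with the Langlands classification applied to the matching standard modules; (8) is obtained by iterating \eqref{eq: structure} and separating the $X_1$- and $X_2$-parts; and (9) is a compatibility check reducing, via regularity, to the binary case. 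The main obstacle I expect is injectivity: knowing $(X_1(\pi),X_2(\pi))$ only determines $\mu^*_{X_1}(\pi)$ and $\mu^*_{X_2}(\pi)$ separately, and showing that these already pin down the full $\mu^*(\pi)$ requires a careful Hopf-module decoupling argument that cannot invoke (8) circularly but must be established directly from the definition of a regular partition.
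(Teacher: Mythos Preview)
The paper does not prove this theorem at all: it is stated in Section~\ref{Jantzen} as a recollection of the main results of Jantzen's paper \cite{J-supp} (``Now we shall recall of the key theorem from the Jantzen's paper \cite{J-supp}''), reformulated slightly, with no proof given. So there is nothing to compare your argument against in the present paper; the actual proof lives in \cite{J-supp}.

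That said, your outline is broadly in the spirit of Jantzen's argument, but two steps are genuinely incomplete as written. For \emph{surjectivity}, you take an irreducible subrepresentation $\pi$ of $\tau_2\rtimes\gamma_1$ ``that also embeds in $\tau_1\rtimes\gamma_2$'', but you have not shown such a common $\pi$ exists: both $\tau_2\rtimes\gamma_1$ and $\tau_1\rtimes\gamma_2$ embed in $\tau_1\times\tau_2\rtimes\sigma$, yet an irreducible sub of the first need not lie in the image of the second. You must actually prove that for any irreducible sub $\pi\hookrightarrow\tau_2\rtimes\gamma_1$ one has $X_2(\pi)=\gamma_2$, which requires a separate Jacquet-module computation, not just the commutation of factors. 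For \emph{injectivity}, your sketch says $(X_1(\pi),X_2(\pi))$ determines $(\mu^*_{X_1}(\pi),\mu^*_{X_2}(\pi))$, but \eqref{right-factor} only gives the \emph{inequality} $\mu^*_{X_{3-i}}(\pi)\leq M^*_{GL}(\beta_i)\otimes\gamma_i$; the second tensor factor is pinned down, but the first factors and their multiplicities are not determined by $\gamma_i$ alone. Moreover, even granting $\mu^*(\pi)=\mu^*(\pi')$, concluding $\pi\cong\pi'$ needs an extra argument (irreducible representations are not in general distinguished by their semisimplified Jacquet modules). In \cite{J-supp} this is handled by an inductive construction that simultaneously builds $\Psi_{X_1,X_2}$ and verifies it inverts $\pi\mapsto(X_1(\pi),X_2(\pi))$, rather than by a direct reconstruction-from-$\mu^*$ argument; you correctly flag this as the main obstacle, but the resolution you propose is not sufficient as stated.
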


\begin{remark} 
\begin{enumerate}
\item Let $\beta_i$ be an irreducible representation  of a general linear group supported in $X_i$,  $i=1,2$, and let $\gamma_i$ be an irreducible representation  of a classical $p$-adic  group supported in $X_i\cup\{\s\}$,  $i=1,2$. Then (5) of the above theorem implies
$$
(\beta_1\t\b_2)\r \Psi_{X_1,X_2}(\g_1,\g_2)
\text{ is irreducible }\iff\text{ both $ \beta_i\r\gamma_i$ are irreducible}.
$$
\item
One can express the above theorem without the last claim, in a natural way for a regular partition in more than two pieces.
\end{enumerate}
\end{remark}

\section{Cuspidal lines}
    \label{lines}
\setcounter{equation}{0}

Let $\rho$ be an irreducible unitarizable cuspidal representation of a general linear group. Denote
$$
X_\rho=\{\nu^x\rho;x\in\R\}\cup \{\nu^x\tilde\rho;x\in\R\},
$$
$$
X_\rho^c=\mathcal C\backslash X_\rho.
$$

For an irreducible representation $\pi$ of a classical $p$-adic group take any finite set of different classes $\rho_1,\dots,\rho_k\in\mathcal C_u$ such that $\rho_i\not\cong\rho_j$ for any $i\ne j$, and that $\pi$ is supported in
$$
X_{\rho_1}\cup\dots\cup X_{\rho_k} \cup \{\s\}.
$$
Then $\pi$ is uniquely determined by
$$
(X_{\rho_1}(\pi),\dots, X_{\rho_k}(\pi)).
$$
Now we have a natural

{\bf Preservation of unitarizability question:}  {\it Let $\pi$ be an irreducible weakly  real 
representation of a classical $p$-adic group\footnote{We do not need to assume  $\pi$ to be  weakly real in the above question. Theorem \ref{unitary-red-1} (or \ref{unitary-red-2}) implies that this is an equivalent to the above question.}. 
 Is $\pi$ unitarizable if and only if all $X_{\rho_i}(\pi)$ are unitarizable?}

\section{Proof of the main result}
\label{proof main theo}
\setcounter{equation}{0}

\begin{theorem}
\label{th-main} Suppose that $\theta$ is an irreducible unitarizable representation of a classical group, and suppose that the  infinitesimal character of some $X_\rho(\theta)$ is the same as the infinitesimal character of a generalized Steinberg representation supported in $X_\rho\cup\{\s\}$ with $\a_{\rho,\s}\in\frac12\Z$\footnote{As we already noted, this is know if char$(F)=0$.}. Then $X_\rho(\theta)$ is 
 the generalized Steinberg representation, or its  Aubert-Schneider-Stuhler  dual.

 In particular, if char$(F)=0$, then $X_\rho(\theta)$ is unitarizable.
\end{theorem}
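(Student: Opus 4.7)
The plan is to combine the Jantzen decomposition (Theorem \ref{main-Jantzen}) with Proposition \ref{prop-main} and the same contradiction strategy used in the proof of Theorem \ref{main}. The sameness of infinitesimal character forces $X_\rho(\theta)$ to be an irreducible subquotient of $\nu^{\alpha+n}\rho \times \nu^{\alpha+n-1}\rho \times \cdots \times \nu^\alpha \rho \rtimes \sigma$. If $X_\rho(\theta)$ is neither the generalized Steinberg representation $\delta([\nu^\alpha\rho,\nu^{\alpha+n}\rho];\sigma)$ nor its ASS dual $L(\nu^{\alpha}\rho,\ldots,\nu^{\alpha+n}\rho;\sigma)$, I will derive a contradiction with the assumed unitarizability of $\theta$.

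First, I apply Proposition \ref{prop-main} to $\gamma = X_\rho(\theta)$ to obtain an irreducible selfcontragredient unitarizable representation $\pi$ of a general linear group, with support contained in $X_\rho$, such that $\pi \rtimes X_\rho(\theta)$ has length at least $5$, while the multiplicity of $\pi \otimes X_\rho(\theta)$ in $\mu^*(\pi \rtimes X_\rho(\theta))$ is at most $4$. The key step is to lift these two inequalities from $X_\rho(\theta)$ to the whole representation $\theta$ via Jantzen's correspondence. With $X_1 = X_\rho$ and $X_2 = X_\rho^c$, write $\theta = \Psi_{X_1,X_2}(X_\rho(\theta), X_\rho^c(\theta))$. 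Applying part (5) of Theorem \ref{main-Jantzen} to $\beta = \pi \times 1$ (so $\beta(X_1) = \pi$ and $\beta(X_2) = 1$), we get
\[
\pi \rtimes \theta \;=\; \sum_j m_j \,\Psi_{X_1,X_2}\!\bigl(\gamma_j,\, X_\rho^c(\theta)\bigr),
\]
where $\pi \rtimes X_\rho(\theta) = \sum_j m_j \gamma_j$ is the decomposition in the Grothendieck group. Since $\Psi_{X_1,X_2}$ is a bijection onto inequivalent irreducibles, the length of $\pi \rtimes \theta$ equals the length of $\pi \rtimes X_\rho(\theta)$, hence is at least $5$.

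Next I bound the multiplicity of $\pi \otimes \theta$ in $\mu^*(\pi \rtimes \theta)$ from above. Applying part (8) of Theorem \ref{main-Jantzen} to each $\Psi_{X_1,X_2}(\gamma_j, X_\rho^c(\theta))$, a term of the form $\eta \otimes \theta$ with $\eta = \pi$ and the classical-group factor $\theta = \Psi_{X_1,X_2}(X_\rho(\theta), X_\rho^c(\theta))$ can arise only when the $X_\rho^c$-piece in $\mu^*(X_\rho^c(\theta))$ is the canonical term $1 \otimes X_\rho^c(\theta)$ (multiplicity one), and when the $X_\rho$-piece in $\mu^*(\gamma_j)$ is $\pi \otimes X_\rho(\theta)$. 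Summing over $j$ with multiplicities $m_j$ gives exactly the multiplicity of $\pi \otimes X_\rho(\theta)$ in $\mu^*(\pi \rtimes X_\rho(\theta))$, which is $\leq 4$ by Proposition \ref{prop-main}.

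Finally, since $\theta$ is unitarizable and $\pi$ is a selfcontragredient unitarizable representation of a general linear group, $\pi \rtimes \theta$ is unitarizable, hence completely reducible. There are then at least $5$ distinct irreducible subrepresentations; by Frobenius reciprocity each of them contributes a copy of $\pi \otimes \theta$ to $\mu^*(\pi \rtimes \theta)$, giving multiplicity at least $5$. This contradicts the upper bound of $4$. Therefore $X_\rho(\theta)$ must be either $\delta([\nu^\alpha\rho,\nu^{\alpha+n}\rho];\sigma)$ or its Aubert-Schneider-Stuhler dual. The last sentence follows because in characteristic zero the generalized Steinberg representation is square integrable (hence unitarizable), and its Aubert-Schneider-Stuhler involution is known to be unitarizable as well. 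The main obstacle is the careful bookkeeping in the Jantzen decomposition — specifically verifying, via parts (5) and (8) of Theorem \ref{main-Jantzen}, that the length and Jacquet-module multiplicity bounds for the single-line computation transfer cleanly to $\theta$, since one must rule out unexpected contributions to the multiplicity from other summands of $\mu^*(X_\rho^c(\theta))$.
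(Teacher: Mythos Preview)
Your proof is correct and follows the same overall strategy as the paper: contradiction via Proposition~\ref{prop-main}, with the length bound on $\pi\rtimes\theta$ coming from part~(5) of Theorem~\ref{main-Jantzen} and the lower bound on the multiplicity of $\pi\otimes\theta$ coming from unitarizability plus Frobenius reciprocity.

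The one genuine difference is in the upper bound for the multiplicity of $\pi\otimes\theta$ in $\mu^*(\pi\rtimes\theta)$. You obtain it directly from part~(8) of Theorem~\ref{main-Jantzen}: since $\pi$ is supported in $X_\rho$, any term $(\eta_1\times\eta_2)\otimes\Psi_{X_\rho,X_\rho^c}(\theta_1,\theta_2)$ equal to $\pi\otimes\theta$ forces $\eta_2=1$ and $\theta_2=X_\rho^c(\theta)$ (the canonical term $1\otimes X_\rho^c(\theta)$, multiplicity one), so the count reduces to the multiplicity of $\pi\otimes X_\rho(\theta)$ in $\mu^*(\pi\rtimes X_\rho(\theta))\leq 4$. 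The paper instead chooses an irreducible $\phi$ supported in $X_\rho^c$ with $\theta\hookrightarrow\phi\rtimes\theta_\rho$, passes to the multiplicity of $\phi\otimes\pi\otimes\theta_\rho$ in a deeper Jacquet module, and bounds it using $\mu^*_{X_\rho^c}$ and Corollary~\ref{cor-mu-mod-X}. Your route via~(8) is more streamlined; the paper's route is more explicit and avoids appealing to the full strength of~(8). One minor wording issue: ``at least $5$ distinct irreducible subrepresentations'' should read ``a direct sum of at least $5$ irreducible summands'' (they need not be pairwise inequivalent), but your Frobenius argument goes through regardless.
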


\begin{proof} 
Denote
$$
\theta_\rho=X_\rho(\theta),\qquad \theta_\rho^c=X_\rho^c(\theta).
$$
Then
$$
\theta=\Psi_{X_\rho,X_\rho^c}(\theta_\rho,\theta_\rho^c).
$$
Suppose that $\theta_\rho$ is neither the generalized Steinberg representation, nor it is its  Aubert-Schneider-Stuhler  dual.  Now Proposition \ref{prop-main}  implies that there exists a selfcontragredient unitarizable representation $\pi$ of a general linear group supported in $X_\rho$ such that the length of  
$$
\pi\r\theta_\rho
$$
 is at least 5, and that the multiplicity of $\pi\o\theta_\rho$ in the Jacquet module of $\pi\r\theta_\rho$ is at most 4.

Consider now 
$$
\pi\r\theta= \pi\r\Psi_{X_\rho,X_\rho^c}(\theta_\rho,\theta_\rho^c).
$$
Then this representation is of length $\geq 5$
(take in (5) of Jantzen theorem $\b(X_\rho)=\pi,\b(X_\rho^c)=1$, and multiply it  by the representation $\Psi_{X_\rho,X_\rho^c}(\theta_\rho,\theta_\rho^c)$).

We shall now use the assumption  that $\theta=\Psi_{X_\rho,X_\rho^c}(\theta_\rho,\theta_\rho^c)$ is unitarizable. From the fact that the length of $\pi\o \Psi_{X_\rho,X_\rho^c}(\theta_\rho,\theta_\rho^c)$ is at least 5 and the exactness of the Jacquet module functor, it follows that the multiplicity of $\pi\o \Psi_{X_\rho,X_\rho^c}(\theta_\rho,\theta_\rho^c)$ in $\mu^*(\pi\r \Psi_{X_\rho,X_\rho^c}(\theta_\rho,\theta_\rho^c))$ is at least five.

By the definition of $\theta_\rho$, we can chose an irreducible representation $\phi$ of a general linear group  supported in $X_\rho^c$ such that 
$$
\Psi_{X_\rho,X_\rho^c}(\theta_\rho,\theta_\rho^c)\h\phi\r \theta_\rho.
$$
By the Frobenius reciprocity, $\phi\o \theta_\rho$ is a sub quotient of the Jacquet module of  $\Psi_{X_\rho,X_\rho^c}(\theta_\rho,\theta_\rho^c)$. Denote its multiplicity  by $k$. This implies that the multiplicity of $\pi\o \phi\o\theta_\rho$ in $\mu^*(\pi\r \Psi_{X_\rho,X_\rho^c}(\theta_\rho,\theta_\rho^c))$ is at least $5k$.

Recall that the support of $\pi$ is in $X_\rho$ and the support of $\phi$ is in $X_\rho^c$.
Let $\Pi$ be an irreducible representation of a general linear group which has in its Jacquet module  $\pi\o \phi$. Then $\Pi\cong \pi'\t\phi'$, where the support of $\pi'$ is in $X_\rho$ and the support of $\phi'$ is in $X_\rho^c$. Further, $\pi$ and $\pi'$ are representations of the same group (as well as $\phi$ and $\phi'$). Frobenius reciprocity implies that $\pi'\o\phi'$ is in the Jacquet module of $\Pi$.
Further, the formula $m^*(\Pi)=m^*(\pi)\t m^*(\phi)$ implies that if we have in the Jacquet module of $\Pi$ an irreducible representation of the form $\pi''\o\phi''$, where the support of $\pi''$ is in $X_\rho$ and the support of $\phi''$ is in $X_\rho^c$, then $\pi''\cong\pi'$, $\phi''\cong\phi'$, and the multiplicity of $\pi'\t\phi'$ in the Jacquet module of $\Pi$ is one. 

This first implies that $\Pi\cong\pi\t\phi$, then that 
 the only irreducible representation of a general linear group which has in its Jacquet module $\pi\o \phi$ is $\pi\t \phi$, and further  that the multiplicity  
of $\phi\o\pi$ in the Jacquet module  $\pi\t \phi$ is  one. 

This and  the transitivity of the Jacquet modules imply that the multiplicity of $\phi\o \pi\o \theta_\rho$ in the Jacquet module of $\mu^*(\pi\r \Psi_{X_\rho,X_\rho^c}(\theta_\rho,\theta_\rho^c))$ is at least $5k$.

Now we examine in a different way the multiplicity of $\phi\o \pi\o \theta_\rho$ in the Jacquet module of $\mu^*(\pi\r \Psi_{X_\rho,X_\rho^c}(\theta_\rho,\theta_\rho^c))$.
Observe that $\phi\o \pi\o \theta_\rho$ must be a sub quotient of a Jacquet module of the following part
$$
\mu^*_{X_\rho^c}(\pi\r \Psi_{X_\rho,X_\rho^c}(\theta_\rho,\theta_\rho^c))=(1\o \pi)\r \mu^*_{X_\rho^c}(\Psi_{X_\rho,X_\rho^c}(\theta_\rho,\theta_\rho^c))
$$
of $\mu^*(\pi\r \Psi_{X_\rho,X_\rho^c}(\theta_\rho,\theta_\rho^c))$.
Recall that by \eqref{right-factor}, $\mu^*_{X_\rho^c}(\Psi_{X_\rho,X_\rho^c}(\theta_\rho,\theta_\rho^c))$ is of the form $*\o\theta_\rho$. If we want to get $\phi\o \pi\o \theta_\rho$ from a term from here, it must be $\phi\o\theta_\rho$. Recall that we have this term with multiplicity $k$ here. Therefore, we need to see the multiplicity of   $\phi\o \pi\o \theta_\rho$ in the Jacquet module of $k\cdot (1\o\pi)\r(\phi\o\theta_\rho)=k\cdot  (\phi\o\pi\r\theta_\rho)$. We know that this multiplicity is at most $4k$. Therefore, $5k\leq4k$ (and $k\geq1$).
This is a contradiction. 

Therefore, $\theta_\rho$ is  
the generalized Steinberg representation or its  Aubert-Schneider-Stuhler  dual. The generalized Steinberg representation is unitarizable (since it is square integrable). Further in characteristic zero, \cite{H} (or \cite{Moe-mult-1} and \cite{MoeR}) implies that its  Aubert-Schneider-Stuhler  dual is unitarizable. Therefore, $\theta_\rho$ is unitarizable if we are in the characteristic zero.
\end{proof}

\section{Irreducible generic and irreducible unramified representations}
\label{generic}
\setcounter{equation}{0}

We consider in this section  quasi-split classical $p$-adic groups (see \cite{LMuT} for more details).
One can find in \cite{LMuT} more detailed exposition of the facts about irreducible generic representations and unitarizable subclasses that we shall use here. We shall recall here only very briefly of some of that facts.

Let $\g$ be an irreducible  representation of a classical group. 
Let $X_1\cup X_2$ be a regular partition of $\mathcal C$.
Now \cite{R-Wh} directly implies that  $\g$ is generic if and only if  $X_1(\g)$ and $X_2(\g)$ are generic.
Therefore, 
\begin{equation}
\label{gen}
\text{$\g$ is generic if and only if all $X_\rho(\g) $ are generic, $\rho\in\mathcal C_u$.}
\end{equation}
Analogous statement holds for temperness by (6) of Theorem \ref{main-Jantzen}.

Recall that  by (5) of Theorem \ref{main-Jantzen}, if the support of some irreducible representation $\beta$ of a general linear group is contained in $X_{\rho'}$,
then holds
\begin{equation}
\label{irr}
\b\r\g \text{ is irreducible } \iff \b\rtimes X_{\rho'}(\g) \text{ is irreducible}.
\end{equation}

Denote by $\mathcal C_u'$ any subset of $\mathcal C_u$ satisfying:
 $$
 \mathcal C_u'\cup (\mathcal C_u')\tilde{\ }=\mathcal C_u \text{ and  }
\rho \in \mathcal C_u'\cap (\mathcal C_u')\tilde{\ } \implies\rho\cong\tilde\rho.
$$

Let $\pi $ be an irreducible generic representation of a classical group. We can write
$\pi$ uniquely as
\begin{equation} \label{lang1}
\pi\cong\delta_1\times\dots\times\delta_k\rtimes\tau
\end{equation}
where the $\delta_i$'s are irreducible essentially square-integrable representations  of
general linear groups which satisfy
\begin{equation}\label{lang2}
e(\delta_1)\ge  \cdots \ge e(\delta_k)>0,
\end{equation}
and  $\tau$ is a generic irreducible tempered  representation of a
classical group.

For $\rho'\in\mathcal C_u'$ chose some irreducible representation $\Gamma_{\rho'}^c$ of a general linear group such that  
$$
\tau\h \Gamma_{\rho'}^c\rtimes X_{\rho'}(\tau),
$$
and that $\Gamma_{\rho'}^c$ is supported out of $X_{\rho'}$.
Observe that
$$
\pi\cong \bigg(\prod_{\rho\in\mathcal C'_u}\bigg(\prod_{\supp(\d_i)\subseteq X_\rho} \delta_i\bigg)\bigg)\rtimes\tau
\h
 \bigg(\prod_{\rho\in\mathcal C'_u}\bigg(\prod_{\supp(\d_i)\subseteq X_\rho} \delta_i\bigg)\bigg)
 \t
 \Gamma_{\rho'}^c\rtimes X_{\rho'}(\tau)
$$
$$
\cong
 \bigg(\prod_{\rho\in\mathcal C'_u\backslash\{\rho'\}}\bigg(\prod_{\supp(\d_i)\subseteq X_\rho} \delta_i\bigg)\bigg)
 \t
 \Gamma_{\rho'}^c
 \t \bigg(\prod_{\supp(\d_i)\subseteq X_{\rho'}} \delta_i\bigg)\rtimes X_{\rho'}(\tau).
$$
One easily sees that there exists an irreducible sub quotient $\Pi_{\rho'}^c$ of 
$$
 \bigg(\prod_{\rho\in\mathcal C'_u\backslash\{\rho'\}}\bigg(\prod_{\supp(\d_i)\subseteq X_\rho} \delta_i\bigg)\bigg)
 \t
 \Gamma_{\rho'}^c
 $$
  such that
 $$
 \pi\h \Pi_{\rho'}^c\t \bigg(\prod_{\supp(\d_i)\subseteq X_{\rho'}} \delta_i\bigg)\rtimes X_{\rho'}(\tau).
 $$
Since $\Pi_{\rho'}^c$ is supported out of $X_{\rho'}$ and $\bigg(\prod_{\supp(\d_i)\subseteq X_{\rho'}} \delta_i\bigg)\rtimes X_{\rho'}(\tau)$ is irreducible and supported in $X_{\rho'}\cup\{\s\}$, we get that
\begin{equation}
\label{comp}
X_{\rho'}(\pi)=\bigg(\prod_{\supp(\d_i)\subseteq X_{\rho'}} \delta_i\bigg)\rtimes X_{\rho'}(\tau).
\end{equation}

 Let $\pi\cong\delta_1\times\dots\times\delta_k\rtimes\tau$ be as in \eqref{lang1}. Then for any square-integrable representation $\delta$ of a general linear group
denote by $\mathcal E_\pi(\delta)$ the multiset of exponents
$e(\delta_i)$ for those $i$ such that $\delta_i^u\cong\delta$. We denote below by $\mathbf 1_G$ the trivial one-dimensional representation of a group $G$. Now we recall of the solution
of the unitarizability problem for irreducible generic representations of classical $p$-adic groups obtained in \cite{LMuT}.

\begin{theorem} \label{generic-th}
Let $\pi$ be given as in \eqref{lang1}. Then $\pi$  is unitarizable
if and only if for all irreducible square integrable representations $\delta$ of general linear groups hold
\begin{enumerate}
\item \label{hermit}
$\mathcal E_\pi(\tilde{\delta})=\mathcal E_\pi(\delta)$, i.e. $\pi$ is
Hermitian.
\item \label{nongntype} If either $\delta\not\cong\tilde{\delta}$ or
$\nu^\frac12\delta\rtimes\mathbf 1_{G_0}$ is reducible then $0<\alpha<\frac12$
for all $\alpha\in \mathcal E_\pi(\delta)$.
\item \label{gntype} If $\tilde{\delta}\cong\delta$ and $\nu^\frac12\delta\rtimes\mathbf 1_{G_0}$ is
irreducible then $\mathcal E_\pi(\delta)$ satisfies Barbasch'
conditions, i.e. we have
$\mathcal E_\pi(\delta)=\{\alpha_1,\dots,\alpha_k,\beta_1,\dots,\beta_l\}$
with
\[
0<\alpha_1\le\dots\le\alpha_k\le\frac12<\beta_1<\dots<\beta_l<1
\]
such that
\begin{enumerate}
\item \label{irreda} $\alpha_i+\beta_j\ne1$ for all $i=1,\dots,k$,
$j=1,\dots,l$; $\alpha_{k-1}\ne\frac12$ if $k>1$.
\item \label{irredb} $\#\{1\le i\le k:\alpha_i>1-\beta_1\}$ is
even if $l>0$.
\item \label{irredc} $\#\{1\le i\le k:1-\beta_j>\alpha_i>1-\beta_{j+1}\}$
is odd for $j=1,\dots,l-1$.
\item \label{tau} $k+l$ is even if $\delta\rtimes\tau$ is reducible.
\end{enumerate}
\end{enumerate}
\end{theorem}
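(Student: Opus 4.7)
The plan is to prove Theorem \ref{generic-th} by combining the Jantzen decomposition with known unitarizability classifications on single cuspidal lines. The strategy breaks naturally into three stages: reduction to a single line, classification of the possible lines, and identification with known unitary duals.

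First I would use the Jantzen decomposition to reduce the problem to single cuspidal lines. By \eqref{gen}, genericity is preserved under $X_\rho$. Moreover, writing $\pi$ in Langlands form \eqref{lang1}, formula \eqref{comp} gives the explicit product
$$X_{\rho'}(\pi) \cong \Big(\prod_{\mathrm{supp}(\delta_i) \subseteq X_{\rho'}} \delta_i \Big) \rtimes X_{\rho'}(\tau).$$
The first step is to establish the Preservation of unitarizability question for generic $\pi$: $\pi$ is unitarizable if and only if every $X_{\rho'}(\pi)$ is. The \emph{if} direction follows by arranging $\pi$ as a product of commuting inductions (using (5) of Theorem \ref{main-Jantzen} and \eqref{irr}, which show that the $X_{\rho'}$-factors induce independently and irreducibly against data living on $X_\rho$-lines for $\rho \neq \rho'$), so a tensor product of unitary structures gives one on $\pi$. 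The \emph{only if} direction is the harder half; the natural route is via Hermitian considerations plus a limit-of-complementary-series argument, using the irreducibility criterion \eqref{irr} to propagate a putative non-unitary line $X_{\rho'}(\pi)$ to a non-unitary deformation of $\pi$ itself. Since conditions (1)--(3) are imposed one $\delta \in D_u$ at a time, they are themselves line-by-line, so after this reduction it suffices to prove the theorem when $\pi$ is supported in a single $X_\rho \cup \{\sigma\}$.

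Next I would split by the type of the line, determined by the pair $(\rho, \sigma)$. Case A: $\rho \not\cong \tilde\rho$. Then the line contains no reducibility with $\sigma$, and any generic irreducible representation supported on $X_\rho \cup X_{\tilde\rho} \cup \{\sigma\}$ is of the form $\theta \rtimes \sigma$ with $\theta$ an irreducible representation of a general linear group. Theorem \ref{unitary-red-2} converts unitarizability of $\pi$ to unitarizability of $\theta$, and Theorem \ref{th-1-int} applied to the generic $\theta$ gives exactly the constraint $0 < \alpha < \tfrac12$ on the exponents, which is precisely condition \eqref{nongntype}. Case B: $\rho \cong \tilde\rho$ but $\alpha_{\rho,\sigma} \neq \tfrac12$. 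Here the reducibility point with $\sigma$ is either $0$ or an integer (under \eqref{1/2Z+}), and generic tempered representations on the line are cuspidal or of Steinberg type without producing nontrivial Barbasch phenomena; reduction via Theorem \ref{unitary-red-2} and the general linear group unitary dual again yields condition \eqref{nongntype}. Case C: $\rho \cong \tilde\rho$ and $\alpha_{\rho,\sigma} = \tfrac12$, so that $\nu^{1/2}\delta \rtimes \mathbf{1}_{G_0}$ is reducible for $\delta$ on the line. This is the genuinely classical case, and the expected answer is the set of Barbasch conditions \eqref{irreda}--\eqref{tau}.

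The main obstacle lies in Case C, which accounts for the bulk of the combinatorial complexity in the statement. To handle it, the plan is to identify the unitarizability problem with the spherical/unramified unitary dual of a split classical group through the Aubert--Schneider--Stuhler involution and Jacquet module combinatorics, and then invoke Barbasch's classification of the spherical unitary dual together with its extension to generic parameters (this is where \cite{LMuT} does the real work). The two key technical inputs are: (i) an irreducibility and Hermitian-sign analysis giving the inequalities $\alpha_i \leq \tfrac12 < \beta_j < 1$ and the non-equality conditions \eqref{irreda}, which come from standard intertwining operator considerations between adjacent exponents; and (ii) the parity conditions \eqref{irredb}--\eqref{tau}, which record in which chambers the Hermitian form changes sign, and which are most cleanly proved by deforming along paths in the space of exponents and tracking signature changes using the Jacquet-module formulas for $\mu^*$ on generic parameters. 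Step (ii) is what requires the most care; I would prove it by an induction on $k+l$ and the exponent $\alpha_k$, using \eqref{BPLC} and (5), (8) of Theorem \ref{main-Jantzen} to control when deformations cross walls of reducibility. Finally, the conditions \eqref{hermit} come directly from the standard Hermitian criterion for Langlands quotients.
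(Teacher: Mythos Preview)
The paper does not prove this theorem at all: it is simply quoted from \cite{LMuT} (see the sentence immediately preceding the statement, ``Now we recall of the solution of the unitarizability problem for irreducible generic representations of classical $p$-adic groups obtained in \cite{LMuT}''). In the paper, Theorem~\ref{generic-th} is an \emph{input}, used together with the Jantzen decomposition and \eqref{irr+} to derive the subsequent Corollary on preservation of unitarizability for generic representations. Your proposal inverts this logical order: you use the Jantzen decomposition to try to prove Theorem~\ref{generic-th}, and then in Case~C you end up invoking \cite{LMuT} (or Barbasch) anyway, which is essentially the result you are trying to establish.

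Independently of that, your case split in the second stage is not correct. Conditions \eqref{nongntype} and \eqref{gntype} are imposed on each square integrable $\delta$ separately, and the dichotomy between them depends on whether $\nu^{1/2}\delta\rtimes\mathbf 1_{G_0}$ is reducible. For $\delta=\delta(\Delta^{(\rho)})$ on a fixed line this depends on the parity of $\mathrm{card}(\Delta)$ relative to whether $\alpha_{\rho,\sigma}\in\mathbb Z$ (see the Remark in Section~\ref{independence}), not on the value of $\alpha_{\rho,\sigma}$ alone. So on any single line with $\rho\cong\tilde\rho$ there will be infinitely many $\delta$ falling under \eqref{nongntype} and infinitely many under \eqref{gntype}; your Cases~B and~C therefore do not partition the problem. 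Moreover, in Case~B ($\rho\cong\tilde\rho$, $\alpha_{\rho,\sigma}\ne\tfrac12$) the representation is already weakly real, so Theorem~\ref{unitary-red-2} gives no reduction to the general linear group unitary dual; there is genuine interaction with $\sigma$ and Barbasch-type conditions do arise for segments of the appropriate parity.
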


Observe that \eqref{irr} implies that if $\supp(\d_i)\subset X_{\rho'}$, then
\begin{equation}
\label{irr+}
\d_i\r\tau \text{ is irreducible } \iff \d_i\rtimes X_{\rho'}(\tau) \text{ is irreducible}.
\end{equation}

Let $\pi$ be a generic representation. We can then present it  by the formula \eqref{lang1}

Suppose that $\pi$ is unitarizable. This implies that $\pi$ satisfies the above theorem. Now from \eqref{irr+}, the above theorem implies that
$
X_{\rho'}(\pi)\cong\bigg(\prod_{\supp(\d_i)\subseteq X_{\rho'}} \delta_i\bigg)\rtimes X_{\rho'}(\tau)
$
is unitarizable (we need \eqref{irr+} only for (d) of (3) in the above theorem).

Suppose now that all $X_{\rho'}(\pi)=\bigg(\prod_{\supp(\d_i)\subseteq X_{\rho'}} \delta_i\bigg)\rtimes X_{\rho'}(\tau)$, $\rho\in\mathcal C_u'$, are unitarizable. Then each of them satisfy the above theorem. Now the above theorem and \eqref{irr+} imply that $\pi$ is unitarizable.

Therefore, we have proved the following

\begin{corollary} For an irreducible generic representation $\pi$ of a classical group holds
$$
\pi\text{ is unitarizable }\iff\text{ all $ X_{\rho}(\pi)$, $\rho\in\mathcal C_u$, are unitarizable}.\qed
$$
\end{corollary}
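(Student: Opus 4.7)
The plan is to reduce the statement to Theorem \ref{generic-th} by showing that each of the conditions appearing there is local with respect to the cuspidal lines $X_\rho$. The starting point is the Langlands-form decomposition \eqref{lang1}, $\pi\cong\delta_1\times\dots\times\delta_k\rtimes\tau$, together with the formula \eqref{comp} which tells us that $X_{\rho'}(\pi)$ is obtained by keeping exactly those $\delta_i$ with $\supp(\delta_i)\subseteq X_{\rho'}$ and replacing the tempered part $\tau$ by $X_{\rho'}(\tau)$. Note also that by \eqref{gen} and (6) of Theorem \ref{main-Jantzen}, $\pi$ is generic (respectively tempered) iff all $X_\rho(\pi)$ are, so we never leave the generic category when passing back and forth.

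The key observation is that the multiset $\mathcal E_\pi(\delta)$ is supported in a single cuspidal line: if $\delta$ is essentially square integrable with $\supp(\delta)\subseteq X_{\rho'}$, then $\mathcal E_\pi(\delta)=\mathcal E_{X_{\rho'}(\pi)}(\delta)$, while $\mathcal E_{X_\rho(\pi)}(\delta)=\emptyset$ for $\rho\not\cong\rho'$. Consequently condition \eqref{hermit} (Hermitianity) factors as a conjunction over $\rho\in\mathcal C_u'$. Condition \eqref{nongntype} is also internal to $X_{\rho'}$, since the reducibility of $\nu^{1/2}\delta\rtimes\mathbf1_{G_0}$ depends only on $\delta$. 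The Barbasch conditions \eqref{irreda}--\eqref{irredc} are pure statements about the single multiset $\mathcal E_\pi(\delta)=\mathcal E_{X_{\rho'}(\pi)}(\delta)$ and thus trivially factor as well.

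The only condition that mixes $\delta$ with $\tau$ is \eqref{tau}, which asks for the parity of $k+l$ to match the reducibility of $\delta\rtimes\tau$. Here I would invoke \eqref{irr+}: since $\supp(\delta)\subseteq X_{\rho'}$, the representation $\delta\rtimes\tau$ is irreducible if and only if $\delta\rtimes X_{\rho'}(\tau)$ is. Thus condition \eqref{tau} for $\pi$ is exactly condition \eqref{tau} for $X_{\rho'}(\pi)$. Combining all the pieces, Theorem \ref{generic-th} holds for $\pi$ if and only if it holds for every $X_{\rho'}(\pi)$ with $\rho'\in\mathcal C_u'$, which by self-contragredience extends to all $\rho\in\mathcal C_u$.

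I do not expect any genuine obstacle here; the whole argument is a bookkeeping exercise once one observes that the Barbasch classification in Theorem \ref{generic-th} is already written line by line. The one place that requires a moment of care is condition \eqref{tau}, where the link between the reducibility $\delta\rtimes\tau$ and $\delta\rtimes X_{\rho'}(\tau)$ must be made explicit via \eqref{irr+}; everything else is immediate from \eqref{comp} and the definition of $\mathcal E_\pi(\delta)$.
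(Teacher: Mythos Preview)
Your proposal is correct and follows essentially the same approach as the paper: both arguments use \eqref{comp} to identify $X_{\rho'}(\pi)$, observe that conditions (1)--(3c) of Theorem~\ref{generic-th} depend only on the multisets $\mathcal E_\pi(\delta)$ and hence factor over cuspidal lines, and invoke \eqref{irr+} to handle the one remaining condition \eqref{tau} involving $\tau$. The paper's own proof is in fact the short paragraph immediately preceding the corollary, and your write-up simply unpacks that paragraph in slightly more detail.
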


In a similar way, using the classification of the irreducible unitarizable unramified  representations of split classical $p$-adic  groups  in \cite{MuT} (or as it is stated in \cite{T-auto}), we get that the above fact holds for irreducible  unramified  representations of classical $p$-adic groups.

\section{Question of independence}
\label{independence}
\setcounter{equation}{0}

Let $\rho$ and $\s$ be irreducible unitarizable cuspidal representations  of a general linear and a classical group respectively. If there exists a non-negative $\a_{\rho,\s}\in \frac12\Z$ such that
$$
\nu ^{\a}\rho\r\s
$$
reduces. When we fix $\rho$ and $\s$ as above, to shorten notation then    this $\a_{\rho,\s}$ will be denoted also by $\a$.

By a $\Z$-segment in $\R$  we shall mean a subset of form $\{x,x+1,\dots,x+l\}$
of $\R$. 
We shall denote this subset  by $[x,x+l]$. For such a segment $\D$, we denote
$$
\D^{(\rho)}=\{\nu^x\rho;x\in\D\}.
$$

We shall fix two pairs $\rho_i,\s_i$ as above, such that
$$
\a_{\rho_1,\s_1}=\a_{\rho_2,\s_2}
$$
and denote it by
$$
\a.
$$ 
We shall construct a natural bijection
$$
E_{1,2}: Irr(X_{\rho_1};\s_1)\rightarrow Irr(X_{\rho_2};\s_2),
$$
which will be canonical, except in the case when $\a=0$.
First we shall define $E_{1,2}$ on the irreducible square integrable representations. 

A classification of irreducible square integrable representations of classical $p$-adic groups modulo cuspidal data is completed in \cite{Moe-T}. We shall freely use notation of that paper, and also of \cite{T-Sing}.  We shall very briefly recall of parameters of irreducible square integrable representations in $Irr(X_\rho;\s)$ (one can find more details in \cite{T-Sing}, sections 16 and 17). Below $(\rho,\s)$ will denote $(\rho_1,\s_1)$ or $(\rho_2,\s_2)$.

An irreducible square integrable representation $\pi\in Irr(X_\rho;\s)$ is parameterized by Jordan blocks $Jord_\rho(\pi)=\{\D_1^{(\rho)},\dots,\D_k^{(\rho)}\}$, where $\D_i$ are $\Z$-segments contained  in $\a+\Z$, and by a partially defined function $\epsilon_\rho(\pi)$ (partial cuspidal support is $\s$). Since $\{\D_1^{(\rho)},\dots,\D_k^{(\rho)}\}$ and $\{\D_1,\dots,\D_k\}$ are in a natural bijective correspondence, we can view $\epsilon_\rho(\pi)$ as defined (appropriately) on $\{\D_1,\dots,\D_k\}$ (which means that $\epsilon_\rho(\pi)$ is independent of particular $\rho$). In  sections 16 and 17  of \cite{T-Sing}, it  is explained how $\pi$ and the triple
$$
(\{\D_1^{(\rho)},\dots,\D_k^{(\rho)}\},\epsilon_\rho(\pi),\s)
$$
are related. In this case we shall write
\begin{equation}
\label{si}
\pi\longleftrightarrow (\{\D_1^{(\rho)},\dots,\D_k^{(\rho)}\},\epsilon_\rho(\pi),\s).
\end{equation}

Take irreducible square integrable representations $\pi_i\in Irr(X_\rho;\s)$, $i=1,2$. Suppose
\begin{equation}
\label{si+}
\pi_1\longleftrightarrow (\{\D_1^{(\rho_1)},\dots,\D_k^{(\rho_1)}\},\epsilon_{\rho_1}(\pi_1),\s_1).
\end{equation}
 Then we define
$$
E_{1,2}(\pi_1)=\pi_2
$$
if 
$$
\pi_2\longleftrightarrow (\{\D_1^{(\rho_2)},\dots,\D_k^{(\rho_2)}\},\epsilon_{\rho_1}(\pi_1),\s_2).
$$
For defining $E_{1,2}$ on the whole $Irr(X_{\rho_1};\s_1)$, the key step is an extension of  $E_{1,2}$ from the square integrable classes to the tempered classes. For this, we shall use parameterization of irreducible tempered representations obtained in \cite{T-temp}\footnote{Another possibility would be to use the Jantzen's parameterization obtained in \cite{J-temp} (we do not know if using \cite{J-temp} would result with the same mapping $E_{1,2}$). }.

Let $\pi\in Irr(X_\rho,;\s)$ be square integrable and 
let  $\d:=\d(\D^{(\rho)})$ be an irreducible (unitarizable) square integrable representation of a general linear group, where $\D$ is a segment in $\a+\Z$ such that $\d\r\pi$ reduces (one directly reads from the invariants \eqref{si} when this happens).
Now Theorem 1.2 of \cite{T-temp} defines the irreducible tempered subrepresentation $\pi_\d$ of $\d\r\pi$. The other irreducible summand is denoted by $\pi_{-\d}$.

Let $\pi\in Irr(X_\rho,;\s)$ be square integrable, 
let  $\d_i:=\d(\D_i^{(\rho)})$ be different  irreducible (unitarizable) square integrable representations of general linear groups, where $\D_i$ are $\Z$-segments contained  in $\a+\Z$ such that all $\d_i\r\pi$ reduce, and let $j_i\in\{\pm1\}$, $i=1,\dots,n$. Then there exists a unique (tempered) irreducible representation $\pi'$ of a classical group  such that
$$
\pi'\h\d_1\t\dots\t\d_{i-1}\t\d_{i+1}\t\dots\t\d_n\r\pi_{j_i\d_i},
$$
for all $i$. Then we denote
$$
\pi'=\pi_{j_1\d_1,\
\dots,j_n\d_n}.
$$
In the situation as above 
we define
$$
E_{1,2}(\pi_{j_1\d(\D_1^{(\rho_1)}),\
\dots,j_n\d(\D_1^{(\rho_1)})})=
E_{1,2}(\pi)_{j_1\d(\D_1^{(\rho_2)}),\
\dots,j_n\d(\D_1^{(\rho_2)})}.
$$
Let additionally $\G_1^{(\rho)},
\dots,\G_m^{(\rho)}$ be segments of cuspidal representations  such that  for each $i$,  either $\G_i$ is among $\D_j$'s, or $\d(\G_i^{(\rho)})\r\pi$ is irreducible, and $-\G_i=\G_i$. Then the tempered representation 
\begin{equation}
\label{temp}
\d(\G_1^{(\rho)})\t\dots\t\d(\G_m^{(\rho)})\r 
\pi_{j_1\d(\D_1^{(\rho)}),\
\dots,j_n\d(\D_1^{(\rho)})}
\end{equation}
is irreducible. We define
$$
E_{1,2}(\d(\G_1^{(\rho_1)})\t\dots\t\d(\G_m^{(\rho_1)})\r \pi_{j_1\d(\D_1^{(\rho_1)}),\
\dots,j_n\d(\D_1^{(\rho_1)})})
=
\hskip60mm
$$
$$
\hskip60mm
\d(\G_1^{(\rho_2)})\t\dots\t\d(\G_m^{(\rho_2)})\r E_{1,2}(\pi_{j_1\d_1,\
\dots,j_n\d_n}).
$$
In this way we have define $E_{1,2}$ on the subset of all the tempered classes  in $Irr(X_{\rho_1};\s)$.

Let now $\pi$ be any element of $Irr(X_{\rho_1};\s)$. Write
$$
L(\D_1^{(\rho_1)},\dots,\D_k^{(\rho_1)};\tau)
$$
as a Langlands quotient ($\D_i$ are $\Z$ segments in $\R$ and $\tau$ is a tempered class in $Irr(X_{\rho_1};\s)$). Then we define
$$
E_{1,2}(L(\D_1^{(\rho_1)},\dots,\D_k^{(\rho_1)};\tau))
=
L(\D_1^{(\rho_2)},\dots,\D_k^{(\rho_2)};E_{1,2}(\tau)).
$$

{\bf Independence  of unitarizability   question:} {\it Let $\rho_1,\rho_2,\s_1$ and $\s_2$ be irreducible cuspidal representations as above. Suppose $\a_{\rho_1,\s_1}=\a_{\rho_2,\s_2}$ and construct the mapping $E_{1,2} $ as above. Let $\pi \in Irr(X_{\rho_1};\s)$. Is $\pi$ unitarizable if and only if $E_{1,2}(\pi)$ is unitarizable?}

One can also ask if the other important representation theoretic data are preserved by $E_{1,2}$ (Jacquet modules, irreducibilities of parabolically induced representations, Kazhdan-Lusztig multiplicities etc.).

\begin{remark} In this remark we consider irreducible  generic representations, and assumptions on the groups are the same as in the section \ref{generic}. We continue with the previous notation. Let $\d:=\d(\D^{(\rho)})$be an irreducible (unitarizable) square integrable representation of a general linear group, where $\D$ is a segment in $\a+\Z$. 

Then we know that $\nu^{1/2}\d(\D^{(\rho)})\r\mathbf 1_{S_{0}}$ reduces if and only if 
\begin{enumerate}

\item card$(\D)$ is odd if $\a\not\in\Z$;

\item card$(\D)$ is even if $\a\in \Z$.
\end{enumerate}

Therefore the conditions of reducibility of $\nu^{1/2}\d(\D^{(\rho)})\r\mathbf 1_{S_0}$ in (2) and irreducibility in (3) of Theorem \ref{main} does not depend on $\rho$, but only on $\D$ and $\a$.

Further, let $\tau$ be the representation in \eqref{temp}. Now $\d(\D^{(\rho)})\r\tau$ is reducible if and only if

\begin{enumerate}

\item[(i)] $\a\in\D$;

\item[(ii)] $\D\not\in\{\D_1,\dots,\D_n\}$ (recall that $\D_1,\dots\D_n$ form the Jordan block of $\pi$ along $\rho$);

\item $\D\not\in\{\G_1,\dots,\G_m\}$.

\end{enumerate}

Obviously, these conditions again does not depend on $\rho_i$, but on $\a=\a_{\rho,\s}$ and the parameters which are preserved by $E_{1,2}$. Therefore now Theorem \ref{main} implies that the  above Independence  of unitarizability   question   has positive answer for the  irreducible generic representations, i.e. the unitarizability in this case does not depend on particular $\rho$ and $\s$, but only on $\a=\a_{\rho,\s}$.

\end{remark}

\section{The case of unitary groups}
\label{unitary}
\setcounter{equation}{0}

We shall now comment the case of unitary groups. Then we have a quadratic extension $F'$ of $F$ and the non-trivial element $\Theta$ of the Galois group. Let $\pi$ be a representation of $GL(n,F')$. Then the representation $g\mapsto \tilde \pi(\Theta(g))$ will be called the $F'/F$-contragredient of $\pi$ and denoted by $\check{\pi}$.

The results of this paper hold also for the unitary groups if we replace everywhere representations of general linear groups over $F$ by representations of general linear groups over $F'$, and contragredients of representations of general linear groups by $F'/F$-contragredients.
The proofs of the statements are the same (with one exception - see below), after we apply the above two changes everywhere. 

The only difference is that the unitarizability of the Aubert-Schneider-Stuhler involution of the generalized Steinberg representation we do not get from \cite{H}. The unitarizability of the Aubert-Schneider-Stuhler involution of a general irreducible square integrable representation of a classical group over a $p$-adic field of  characteristic zero follows  from \cite{Moe-mult-1}, \cite{Moe-ASS} (proposition 4.2 there)
 and (in the non-quasi split case) \cite{MoeR} (Theorem 4.1 there).

\end{document}